\newtheorem{thm}{Theorem}[section]
\newtheorem{lem}[thm]{Lemma}
\newtheorem{cor}[thm]{Corollary}
\newtheorem{prop}[thm]{Proposition}
\theoremstyle{definition}
\theoremstyle{remark}
\newtheorem{rem}[thm]{Remark}
\numberwithin{equation}{section}
\begin{document}

\newcommand{\thmref}[1]{Theorem~\ref{#1}}
\newcommand{\secref}[1]{Section~\ref{#1}}
\newcommand{\lemref}[1]{Lemma~\ref{#1}}
\newcommand{\propref}[1]{Proposition~\ref{#1}}
\newcommand{\corref}[1]{Corollary~\ref{#1}}
\newcommand{\remref}[1]{Remark~\ref{#1}}
\newcommand{\eqnref}[1]{(\ref{#1})}
\newcommand{\exref}[1]{Example~\ref{#1}}

\newcommand{\nc}{\newcommand}
 \nc{\Z}{{\mathbb Z}}
 \nc{\C}{{\mathbb C}}
 \nc{\N}{{\mathbb N}}
 \nc{\F}{{\mf F}}
 \nc{\Q}{\ol{Q}}
 \nc{\la}{\lambda}
 \nc{\ep}{\epsilon}
 \nc{\h}{\mathfrak h}
 \nc{\n}{\mf n}
 \nc{\A}{{\mf a}}
 \nc{\G}{{\mathfrak g}}
 \nc{\SG}{\overline{\mathfrak g}}
 \nc{\D}{\mc D}
 \nc{\Li}{{\mc L}}
 \nc{\La}{\Lambda}
 \nc{\is}{{\mathbf i}}
 \nc{\V}{\mf V}
 \nc{\bi}{\bibitem}
 \nc{\NS}{\mf N}
 \nc{\dt}{\mathord{\hbox{${\frac{d}{d t}}$}}}
 \nc{\E}{\mc E}
 \nc{\ba}{\tilde{\pa}}
 \nc{\half}{\frac{1}{2}}
 \nc{\mc}{\mathcal}
 \nc{\mf}{\mathfrak}
 \nc{\hf}{\frac{1}{2}}
 \nc{\hgl}{\widehat{\mathfrak{gl}}}
 \nc{\gl}{{\mathfrak{gl}}}
 \nc{\hz}{\hf+\Z}
 \nc{\vac}{|0 \rangle}
 \nc{\dinfty}{{\infty\vert\infty}}
 \nc{\SLa}{\overline{\Lambda}}
 \nc{\SF}{\overline{\mathfrak F}}
 \nc{\SP}{\overline{\mathcal P}}
 \nc{\U}{\mathfrak u}
 \nc{\SU}{\overline{\mathfrak u}}
 \nc{\ov}{\overline}
 \nc{\sL}{\ov{\mf{l}}}
 \nc{\sP}{\ov{\mf{p}}}
 \nc{\osp}{\mf{osp}}
 \nc{\spo}{\mf{spo}}
 \nc{\hosp}{\widehat{\mf{osp}}}
 \nc{\hspo}{\widehat{\mf{spo}}}
 \nc{\tL}{\mathcal{L}}
 \nc{\Sp}{\mf{Sp}}
 \nc{\glmn}{\mf{gl}(m|n)}
 \nc{\Pmn}{\mf{P}(m|n)}

 \advance\headheight by 2pt

\title[Kostant homology formula]
{Kostant homology formulas for oscillator modules of Lie
superalgebras}

\author[Cheng]{Shun-Jen Cheng$^\dagger$}
\thanks{$^\dagger$Partially supported by an NSC-grant and an Academia Sinica Investigator grant}
\address{Institute of Mathematics, Academia Sinica, Taipei,
Taiwan 11529} \email{chengsj@math.sinica.edu.tw}

\author[Kwon]{Jae-Hoon Kwon$^{\dagger\dagger}$}
\thanks{$^{\dagger\dagger}$Partially supported by KRF Grant 2008-314-C00004.}
\address{Department of Mathematics, University of Seoul, Seoul 130-743, Republic of Korea}
\email{jhkwon@uos.ac.kr}

\author[Wang]{Weiqiang Wang$^{\dagger\dagger\dagger}$}
\thanks{$^{\dagger\dagger\dagger}$Partially supported by NSF and NSA grants.}
\address{Department of Mathematics, University of Virginia, Charlottesville, VA 22904, USA}
\email{ww9c@virginia.edu}

\begin{abstract} \vspace{.3cm}
We provide a systematic approach to obtain formulas for characters
and Kostant $\U$-homology groups of the oscillator modules of the
finite dimensional general linear and ortho-symplectic
superalgebras, via Howe dualities for infinite dimensional Lie
algebras. Specializing these Lie superalgebras to Lie algebras, we
recover, in a new way, formulas for Kostant homology groups of
unitarizable highest weight representations of Hermitian symmetric
pairs. In addition, two new reductive dual pairs related to the
above-mentioned $\U$-homology computation are worked out.

\end{abstract}



 \maketitle

  \setcounter{tocdepth}{1}
\tableofcontents

\section{Introduction}

\subsection{}
Inspired by the Borel-Weil-Bott theorem, in a classical work
\cite{Ko} Kostant computed the $\U$-(co)homology groups for finite
dimensional simple modules of a semisimple Lie algebra, recovering
the celebrated Weyl character formula in a purely algebraic way.
Since then Kostant's calculation has been generalized in nontrivial
ways to various setups, e.g.~to integrable modules of Kac-Moody
algebras \cite{GL}, to unitarizable highest weight modules of
Hermitian symmetric pairs \cite{En}, to finite dimensional modules
of general linear superalgebras \cite{Se} (and in a different way to
polynomial representations \cite{CZ1}), and more recently to modules
of infinite dimensional Lie superalgebras \cite{CK}.

On the other hand, Howe's theory of reductive dual pairs \cite{H,
H2} has played important roles in the representation theory of real
and $p$-adic Lie groups, and there have been generalizations in
different directions. Dual pairs $(\G, G)$ have been formulated
systematically by the third author \cite{Wa} between {\em infinite
rank} affine Kac-Moody algebras $\G$ \cite{DJKM, K} and finite
dimensional reductive Lie groups $G$. More recently, dual pairs
between {\em finite dimensional} Lie superalgebras $\SG$ and Lie
groups $G$ have been studied in depth with application to
irreducible characters of $\SG$ in a number of papers \cite{CW1,
CZ2, CLZ}. A priori, no direct link between different dual pairs
$(\SG,G)$ and $(\G,G)$ was expected to exist, except that the
$\G$-modules $\{L(\G, \La(\la))\}$ and $\SG$-modules $\{L(\SG,
\widehat{\La}_f(\la))\}$ appearing in these Howe dualities can both
be parameterized by (part of) the same index set that parameterizes
the finite dimensional simple $G$-modules $\{V_G^\la\}$. Below is a
list of the reductive dual pairs used in this paper which share the
same $G$.

\vspace{.4cm}
\begin{center}
\begin{tabular}{|c|c|c|}
\hline Dual Pairs I $(\SG, G)$ & Dual Pairs II $(\G, G)$ & Dual
Pairs III
$(\G^*, G)$\\
\hline $(\gl(p+m|q+n), {\rm GL}(d))$ & $(\mf{a}_\infty, {\rm
GL}(d))$ &
$(\mf{a}_\infty, {\rm GL}(d))$\\
\hline $(\spo(2m|2n+1), {\rm Pin}(d))$ & $(\mf{b}_\infty, {\rm
Pin}(d))$ &
$(\mf{b}^{\mf 0}_\infty, {\rm Pin}(d))$ \\
\hline $(\osp(2m|2n), {\rm Sp}(d))$ & $(\mf{c}_\infty, {\rm Sp}(d))$
&
$(\mf{d}_\infty, {\rm Sp}(d))$ \\
\hline $(\spo(2m|2n), {\rm O}(d))$ & $(\mf{d}_\infty, {\rm O}(d))$ &
$(\mf{c}_\infty, {\rm O}(d))$  \\
\hline
\end{tabular}

\vspace{.1cm} Table 1: Reductive dual pairs
\end{center}

\subsection{}
The main goal of this paper is to develop a conceptual approach to
computing Kostant $\U$-homology groups  with coefficients in the
so-called oscillator $\SG$-modules, i.e. those appearing in the Howe
duality $(\SG,G)$ (see Column~I in Table~1). Remarkably, our results
show that the $\mf u$-homology groups of these oscillator
$\SG$-modules are dictated by those of the corresponding integrable
$\G$-modules, where $\G$ is the infinite dimensional classical
``counterpart'' (see Column~II) of $\SG$. The Howe dualities
$\left(\spo(2m|2n+1),{\rm Pin}(d)\right)$ and $\left(\mf{b}^{\mf
0}_\infty,{\rm Pin}(d)\right)$ appear to be new and are worked out
in Appendix \ref{appendix:A}.

The results in this paper
provided the first  supporting evidence for a super duality between categories $\ov{\mathcal O}$ of
$\SG$-modules and certain categories $\mc{O}_f$ of $\G_f$-modules
\cite{CW} (generalizing the super duality \cite{CWZ} for type $A$),
where $\G_f$ denotes certain finite dimensional reductive Lie
algebras corresponding to $\SG$. As classical Kazhdan-Lusztig
polynomials allow an interpretation in terms of $\U$-homology
\cite{V}, the main results here may be reformulated as an equality
of certain Kazhdan-Lusztig polynomials for categories $\ov{\mathcal
O}$ and $\mathcal O_f$.

In spite of the infinite dimensionality, the $\G$-modules above are
integrable and hence there is a standard approach to compute their
Kostant homology groups (cf. \cite{GL, J, L}). On the other hand,
the structures of the infinite dimensional oscillator modules of the
finite dimensional Lie superalgebras $\SG$ are not so well
understood.
In the approach of this paper, we use essentially only the Howe
dualities $(\SG, G)$ and $(\G, G)$  for the same $G$ (see Columns~I
and II in the above Table) together with some simple combinatorial
manipulations with the characters of the integrable $\G$-modules to
derive first a character formula of the oscillator $\SG$-modules in
a suitable form. Then from a comparison of the Casimir eigenvalues
of the corresponding $\G$-module and $\SG$-module, we obtain
formulas of the corresponding $\U$-homology groups with coefficients
in $L(\SG, \widehat{\La}_f(\la))$, which are expressed in terms of
the infinite Weyl group of $\G$, from the corresponding formulas for
${\rm H}_*({\U}, L(\G, \La(\la)))$.

\subsection{}
In the paper \cite{CK}, the Howe duality $(\G,G)$ was used together
with another Howe duality $(\G^s,G)$ (due to \cite{CW2} for types
$A,B$ and \cite{LZ} for types $C,D$) to derive a $\U$-homology
formula for modules over infinite dimensional Lie superalgebras
$\G^s$ from a corresponding $\U$-homology formula for modules of
$\G$. However, the connection between integrable modules over
infinite rank affine Kac-Moody algebras and oscillator
representations of finite dimensional Lie (super)algebras were not
suspected back then.

On the other hand, character formulas for $\SG$-modules above were
obtained in different ways and in different forms in earlier works
\cite{CZ2, CLZ}. In \cite{CZ2} a key role was played by a difficult
theorem of Enright on $\U$-homology for unitarizable highest weight
modules of real reductive Lie algebras established by intricate
arguments involving equivalence of categories and nontrivial
combinatorics on Weyl groups \cite{En}. Our present approach
bypasses Enright's theorem. Indeed, in the special case when the Lie
superalgebras specialize to Lie algebras (i.e. $q=n=0$ in Column I
and Rows 1, 3, and 4, of Table 1), we recover Kostant homology
formulas for unitarizable highest weight modules of three Hermitian
symmetric pairs of classical types (which can be shown by some
combinatorial argument to be equivalent to Enright's formula).

\subsection{}

The paper is organized as follows. In \secref{IDLA}, we review and
set up notations for various Howe dualities involving the infinite
dimensional Lie algebras $\G$ and finite dimensional Lie
superalgebras $\SG$. In \secref{sec:char}, we compute the character
formulas of the oscillator $\SG$-modules from Howe dualities. In
\secref{casimir:op}, the Casimir eigenvalues of modules of $\SG$ and
$\G$ are computed and compared, and they are used in
\secref{sec:homology} to obtain formulas for Kostant homology groups
for the oscillator $\SG$-modules.

Generalizing the type $A$ case in \cite{CK}, we compute in
\secref{sec:negative} the character formulas and Kostant homology
formulas for non-integrable $\G^*$-modules at negative integral
levels appearing in the Howe dualities $(\G^*, G)$ listed in
Column~III of Table 1. The dualities $(\G^*, G)$ with $G={\rm
GL}(d)$,  ${\rm Sp}(d)$ and ${\rm O}(d)$ were treated in \cite{KR2}
and \cite{Wa}. The new case involving ${\rm Pin}(d)$ is worked out
in Appendix \ref{appendix:A}, where one can also find an additional
Howe duality $\left(\spo(2m|2n+1),{\rm Pin}(d)\right)$.


\subsection{Notations} \label{sec:notation}
Denote by $\mathcal P^+$ the set of partitions. For $\la\in\mc{P}^+$
we denote by $\ell(\la)$ the length of $\la$, and by $|\la|$ the
size of $\la$. Given $d\in\N$, a non-increasing sequence
$\la=(\la_1,\la_2,\ldots,\la_d)$ of $d$ integers will be called a
{\em generalized partition of depth} $d$. For a generalized
partition $\la$ of depth $d$, we define
\begin{equation*}
\la^+:=(\langle\la_1\rangle,\ldots,\langle\la_d\rangle),\quad
\la^-:=(\langle-\la_1\rangle,\ldots,\langle-\la_d\rangle),
\end{equation*}
where here and further we set $\langle a\rangle={\rm max}\{a,0\}$
for $a\in\Z$. Then $\la^+$ is a partition and $\la^-$ is a
non-decreasing sequence of non-negative integers.

For a sequence of non-negative integers $\mu=(\mu_1,\mu_2,\ldots)$
we let $\mu'=(\mu'_1,\mu'_2,\ldots)$ be the partition with
$\mu'_j:=|\{\,i\,\vert\,\mu_i\ge j\,\}|$. Let $\Z$, $\N$, and $\Z_+$
stand for the set of all, positive, and non-negative integers,
respectively.

All vector spaces, algebras, etc.~are over the complex field $\C$.

\subsection{Acknowledgment}   The authors thank Academia Sinica in Taipei,
University of Virginia, and MSRI for hospitality and support, where
part of this work was carried out.

\section{Howe dualities for Lie algebras $\G$ and Lie superalgebras $\SG$}\label{IDLA}

In this section we review various Howe dualities involving infinite
dimensional Lie algebras $\G$ and finite dimensional Lie
superalgebras $\SG$.

\subsection{Infinite rank affine Lie algebras}\label{class:subalgebras}

The infinite dimensional Lie algebra $\widehat{\gl}_\infty$ and its
subalgebras $\mf{b}_\infty$, $\mathfrak{c}_{\infty}$,
$\mathfrak{d}_{\infty}$ of types $B, C, D$ \cite{DJKM, K} are well
known. Our notations regarding these Lie algebras in this paper will
be the same as those in \cite[Section 2.2]{CK}, and we refer to {\em
loc.~cit.} for explicit case-by-case description of the following
standard terminologies based on matrix elements $E_{ij}$ for
$\widehat{\gl}_\infty$, $\widetilde{E}_n$ for other types, and
weights $\epsilon_i$ etc.
\begin{itemize}
\item[$\cdot$] $\G$: $\widehat{\gl}_\infty
\equiv\mathfrak{a}_{\infty}$, $\mf{b}_\infty$,
$\mathfrak{c}_{\infty}$ or $\mathfrak{d}_{\infty}$, with triangular
decomposition $\G =\G_+ \oplus \h \oplus \G_-$,

\item[$\cdot$] $\mathfrak{h}$: a Cartan subalgebra of $\mathfrak{g}$,

\item[$\cdot$] $I$: an index set for simple roots for $\G$,

\item[$\cdot$] $\mathfrak l$: the Levi subalgebra of $\G$ with
simple roots indexed by $S=I\backslash \{0\}$,

\item[$\cdot$] $\U_\pm$: the nilradicals with $\G =\U_+ \oplus
\mathfrak l \oplus \U_-$, $\U_-:=\U$,

\item[$\cdot$] $\Pi=\{\,\alpha_i\,|\,i\in I\,\}$: a set of simple
roots for $\G$,

\item[$\cdot$] $\Pi^{\vee}=\{\,\alpha_i^{\vee}\,|\,i\in I\,\}$: a
set of simple coroots for $\G$,

\item[$\cdot$] $\Delta^{+}$  (resp. $\Delta^{-}$): a set of
positive (resp. negative) roots for $\G$,

\item[$\cdot$] $\Delta^{+}_S$ (resp. $\Delta^{-}_S$): a set of
positive (resp. negative) roots for $\mathfrak l$,

\item[$\cdot$] $\Delta^\pm (S)$: the subset of roots for
$\U_\pm$,

\item[$\cdot$] $\La_i^{\mathfrak x}$: the fundamental weights for
$\mf{x}_\infty$ with $i \in I$ and $\mf{x} \in \{\mf{a}, \mf{b},
\mf{c}, \mf{d}\}$,

\item[$\cdot$] $\rho_c$: ``half sum" of the positive roots in
$\Delta^{+}$,

\item[$\cdot$] $W$ (resp. $W_0$): Weyl group of $\G$ (resp.
$\mathfrak l$),

\item[$\cdot$] $W^0_k \equiv W^0_k(\mf{x})$: the set of the
minimal length representatives of the right coset space
$W_0\backslash W$ of length $k$ for $\mf{x}_\infty$ with $\mf{x} \in
\{\mf{a}, \mf{b}, \mf{c}, \mf{d}\}$,

\item[$\cdot$] $L(\G,\La)$: the irreducible highest weight $\G$-module of
highest weight $\La\in\h^*$.



\end{itemize}
\subsection{Reductive dual pairs on Fock spaces}\label{classical:dualpairs}

\subsubsection{Fermionic Fock spaces}
We fix a positive integer $\ell\geq 1$ and consider $\ell$ pairs
of free fermions $\psi^{\pm,i}(z)$ with $i=1,\ldots,\ell$. That
is, we have
\begin{align*}
\psi^{+,i}(z)&=\sum_{r\in\hf+\Z}\psi^{+,i}_rz^{-r-\hf},\quad\quad\
\psi^{-,i}(z)=\sum_{r\in\hf+\Z}\psi^{-,i}_rz^{-r-\hf},
\end{align*}
with non-trivial anti-commutation relations
$[\psi^{+,i}_r,\psi^{-,j}_s]=\delta_{ij}\delta_{r+s,0}$. Let
$\F^\ell$ denote the corresponding Fock space generated by the
vacuum vector $|0\rangle$, which is annihilated by
$\psi^{+,i}_r,\psi^{-,i}_s$ for $r,s>0$.

We introduce a neutral fermionic field
$\phi(z)=\sum_{r\in\hf+\Z}\phi_rz^{-r-\hf}$ with non-trivial
anti-commutation relations $[\phi_r,\phi_s]=\delta_{r+s,0}$. Denote
by $\F^{\hf}$ the Fock space of $\phi(z)$ generated by a vacuum
vector that is annihilated by $\phi_r$ for $r> 0$. We denote by
$\F^{\ell+\hf}$ the tensor product of $\F^{\ell}$ and $\F^{\hf}$.

\subsubsection{The $(\widehat{\gl}_\infty,{\rm GL}(d))$-duality}

We denote by $e_{ij}$ ($1\leq i,j\leq d$) the elementary $d\times d$
matrix with $1$ in the $i$th row and $j$th column and $0$ elsewhere.
Then $H=\sum_{i}\C e_{ii}$ is a Cartan subalgebra of $\mf{gl}(d)$,
while $\sum_{i\le j}\C e_{ij}$ is a Borel subalgebra of $\mf{gl}(d)$
containing $H$. An irreducible rational representation of
$\mf{gl}(d)$ (or ${\rm GL}(d)$) is determined by its highest weight
$\la\in H^*$ with $\langle \la, e_{ii}\rangle=\la_i\in\Z$ $(1\le
i\le d)$ and $\la_1\geq\ldots \geq \la_{d}$. Identifying $\la$ with
$(\la_1,\ldots,\la_{d})$ as usual, we denote by $V^\la_{{\rm
GL}(d)}$ the irreducible ${\rm GL}(d)$-module of highest weight
$\la$. These irreducible modules are parameterized by the set of
generalized partitions of depth $d$:
\begin{equation*}
{\mc P}({\rm GL}(d)):=\{\,\la=(\la_1,
\ldots,\la_d)\,|\,\lambda_i\in\Z, \ \la_1\geq\ldots \geq
\la_{d}\,\}.
\end{equation*}

\begin{prop} \label{duality} \cite{Fr} {\rm (cf.~\cite[Theorem 3.1]{Wa})}
There exists a commuting action of $\widehat{\gl}_\infty$ and ${\rm
GL}(d)$ on $\F^d$. Furthermore, under this joint action, we have
\begin{equation} \label{eq:dual}
\F^d\cong\bigoplus_{\la\in {\mc P}({\rm GL}(d))}
L(\hgl_\infty,\La^{\mf a}(\la))\otimes V_{{\rm GL}(d)}^\la ,
\end{equation}
where $\Lambda^{\mf a}(\la):=d\La^{\mf a}_0+\sum_{j\ge
1}(\la^+)'_j\epsilon_j-\sum_{j\ge 0}(\la^-)'_{j+1}\epsilon_{-j} =
\sum_{i=1}^d \Lambda^{\mf a}_{\la_i}$.
\end{prop}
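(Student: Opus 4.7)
The plan is to prove this classical Howe duality on the Fock space $\F^d$ in three stages: construct explicit commuting actions via fermionic bilinears, compute and factorize the joint character via a Cauchy-type identity, and pin down multiplicities using joint highest weight vectors.

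First, I would write down the two actions. On the $d$-pair Fock space $\F^d$, $\widehat{\gl}_\infty$ acts through the standard level-$d$ wedge representation, $E_{rs}\mapsto\sum_{i=1}^d :\psi^{+,i}_{-r}\psi^{-,i}_s:$ for $r,s\in\hf+\Z$, with the central element acting as the scalar $d$, while $\gl(d)$ acts diagonally by $e_{ij}\mapsto\sum_{r\in\hf+\Z}:\psi^{+,i}_{-r}\psi^{-,j}_{r}:$. A routine Wick-type computation shows these two families commute, since the cross commutators are telescoping sums of anti-commutators of opposite-parity fermion modes and the potential central anomaly from the double normal ordering involves a trace that vanishes by the structure of the summations.

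Second, I would compute the joint character of $\F^d$ with respect to the two Cartans. Letting $z_1,\ldots,z_d$ be the ${\rm GL}(d)$-variables and $q_r$ $(r\in\hf+\N)$ the infinite Cartan variables, the fermionic formalism yields
\begin{equation*}
\mathrm{ch}\,\F^d \;=\; \prod_{i=1}^d\prod_{r\in\hf+\N}(1+z_iq_r^{-1})(1+z_i^{-1}q_r),
\end{equation*}
up to the overall vacuum shift encoding $d\La^{\mf a}_0$. Applying the dual Cauchy identity $\prod_{i,j}(1+x_iy_j)=\sum_\mu s_\mu(x)s_{\mu'}(y)$ separately to the positive-mode and negative-mode factors splits the product into a sum over a pair of partitions $(\mu,\nu)$, which I would then repackage as a generalized partition $\la\in\mc{P}({\rm GL}(d))$ by setting $\la^+=\mu$ and $\la^-=\nu$. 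On the $\widehat{\gl}_\infty$ side, the resulting Jacobi-Trudi-type expression reproduces $\mathrm{ch}\,L(\widehat{\gl}_\infty,\La^{\mf a}(\la))$; on the $\gl(d)$ side one obtains $\mathrm{ch}\,V_{{\rm GL}(d)}^\la$.

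Finally, to promote the character identity to a module decomposition, for each $\la$ I would exhibit a joint highest weight vector built by applying suitable products of $\psi^{\pm,i}_{-r}$ to the vacuum $\vac$, whose ${\rm GL}(d)$-weight is $\la$ and whose $\widehat{\gl}_\infty$-weight equals $\La^{\mf a}(\la)=\sum_{i=1}^d\La^{\mf a}_{\la_i}$. Together with the character identity and the complete reducibility of $\F^d$ under this joint action (which can be deduced from the unitary structure), this forces the isotypic decomposition claimed in \eqref{eq:dual}. The main obstacle is the careful bookkeeping of the negative-charge sectors: the sequence $\la^-$ must be translated into an anti-fermion partition with the right sign conventions so that the fundamental-weight formula $d\La^{\mf a}_0+\sum_{j\ge 1}(\la^+)'_j\epsilon_j-\sum_{j\ge 0}(\la^-)'_{j+1}\epsilon_{-j}$ emerges exactly as stated; once this is correctly set up, the remaining steps are standard infinite-dimensional representation theory, and in fact the statement can alternatively be deduced from Frenkel's original work or from \cite[Theorem 3.1]{Wa}.
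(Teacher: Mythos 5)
The paper does not prove this proposition; it cites Frenkel~\cite{Fr} and Wang~\cite[Theorem 3.1]{Wa} and takes the result as a known input, later deriving the character identity \eqref{combid-classical1} \emph{from} the duality \eqref{eq:dual} rather than the other way around. So there is no internal proof to compare against, and your sketch should be judged on its own.

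Your three-stage outline (construct commuting bilinear actions, factor the character, pin down multiplicities via joint highest weight vectors) is the right general template, and stages one and three are in order. The gap is in the middle stage. Applying the dual Cauchy identity separately to the positive-mode and negative-mode factors gives a double sum $\sum_{\mu,\nu}s_{\mu'}(z_1,\ldots,z_d)\,s_{\nu'}(z_1^{-1},\ldots,z_d^{-1})\,s_\mu(\cdot)\,s_\nu(\cdot)$, and the product $s_{\mu'}(z)s_{\nu'}(z^{-1})$ is \emph{not} the character of a single irreducible rational ${\rm GL}(d)$-module. In general it is a nonnegative integer combination of the rational Schur functions $s_\la(z)$ for generalized partitions $\la$, with multiplicities given by sums of products of Littlewood--Richardson coefficients (modification rules of Koike--Terada type). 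So ``repackaging by setting $\la^+=\mu$, $\la^-=\nu$'' does not produce the claimed multiplicity-free expansion $\sum_\la\mathrm{ch}\,L(\hgl_\infty,\La^{\mf a}(\la))\,\mathrm{ch}\,V^\la_{{\rm GL}(d)}$ without substantial further combinatorics. Moreover you would also need to know $\mathrm{ch}\,L(\hgl_\infty,\La^{\mf a}(\la))$ independently to recognize the coefficient of $s_\la(z)$, which is close to circular. The cleaner route — and the one Wang's proof actually takes — is to decompose $\F^d$ by the $\Z^d$-grading of ${\rm GL}(d)$-charges, write down explicit joint highest weight vectors in each charge sector (semi-infinite wedge type vectors), and combine this with the double commutant theorem for the reductive group ${\rm GL}(d)$ acting on each finite-dimensional graded piece. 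If you want to keep your character-theoretic stage two, you would need to supply the rational Schur function identity that converts the $(\mu,\nu)$-sum into a single $\la$-sum; as written the step is asserted rather than proved.
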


We assume below that $x_n$ ($n\in\Z$) and $z_i$ ($i\in\N$) are
formal indeterminates. Computing the trace of the operator
$\prod_{n\in\Z}x_n^{E_{nn}}\prod_{i=1}^d z_i^{e_{ii}}$ on both sides
of \eqnref{eq:dual}, we obtain the following identity:
\begin{equation}\label{combid-classical1}
\prod_{i=1}^d\prod_{n\in\N}(1+x_nz_i)(1+x_{1-n}^{-1}z^{-1}_{i})=\sum_{\la\in
{\mc P}({\rm GL}(d))}{\rm ch}L(\hgl_\infty,\Lambda^{\mf a}(\la))\;
{\rm ch}V^\la_{{\rm GL}(d)}.
\end{equation}

\subsubsection{The $(\mathfrak{c}_{\infty},{\rm Sp}(d))$-duality}

Let $d$ be even and ${\rm Sp}(d)$ denote the symplectic group, which
may be viewed as the subgroup of ${\rm GL}(d)$ preserving the
non-degenerate skew-symmetric bilinear form on $\C^{d}$ given by
$$\left(
    \begin{array}{cc}
      0 & J_{\frac{d}{2}} \\
      -J_{\frac{d}{2}} & 0 \\
    \end{array}
  \right).
$$
Here $J_k$ is the following $k\times k$ matrix:
\begin{equation}\label{side:diagonal}
J_{k}=\begin{pmatrix}
0&0&\cdots&0&1\\
0&0&\cdots&1&0\\
\vdots&\vdots&\vdots&\vdots&\vdots\\
0&1&\cdots&0&0\\
1&0&\cdots&0&0\\
\end{pmatrix}.
\end{equation}
Let $\mf{sp}(d)$ be the Lie algebra of ${\rm Sp}(d)$. We take as a
Borel subalgebra of $\mf{sp}(d)$ the subalgebra of upper triangular
matrices, and as a Cartan subalgebra $H$ the subalgebra spanned by
${e}_{i}=e_{ii}-e_{d+1-i,d+1-i}$ ($1\leq i\leq \frac{d}{2}$). A
finite dimensional irreducible representation of $\mf{sp}(d)$ is
determined by its highest weight $\la\in H^*$ with $\langle \la,
{e}_{i}\rangle=\la_i\in\Z_+$ $(1\le i\le \frac{d}{2})$ and
$\la_1\geq\ldots \geq \la_{\frac{d}{2}}$. Furthermore each such
representation lifts to an irreducible representation of ${\rm
Sp}(d)$, which is denoted by $V_{{\rm Sp}(d)}^{\lambda}$. Put
\begin{equation*}
{\mc P}({\rm Sp}(d)):=\{\,\la=(\la_1,
\ldots,\la_{\frac{d}{2}})\,|\,\lambda_i\in\Z_+, \ \la_1\geq\ldots
\geq \la_{\frac{d}{2}}\,\},
\end{equation*}
which is the set of partitions of length no more than $\frac{d}{2}$.

\begin{prop} \label{duality-c} \cite[Theorem 3.4]{Wa}
There exists a commuting action of $\mathfrak{c}_{\infty}$ and ${\rm
Sp}(d)$ on $\F^{\frac{d}{2}}$. Furthermore, under this joint action,
we have
\begin{equation} \label{eq:dual-c}
\F^{\frac{d}{2}}\cong\bigoplus_{\la\in {\mc P}({\rm
Sp}(d))}L(\mathfrak{c}_{\infty},\La^{\mf c}(\la))\otimes V_{{\rm
Sp}(d)}^\la,
\end{equation}
where $\Lambda^{\mf c}(\la) :=\frac{d}{2}\La^{\mf c}_0+\sum_{k\geq
1}\la'_k\epsilon_k = \sum_{k=1}^{\frac{d}{2}} \Lambda^{\mf
c}_{\la_k}$.
\end{prop}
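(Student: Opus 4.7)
The plan is to follow the standard Howe duality paradigm on Fock spaces, adapted to the type-$C$ setting and paralleling the $(\hgl_\infty, {\rm GL}(d))$ case of \propref{duality}.

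First, I would construct the two commuting actions explicitly. The Lie algebra $\mathfrak{c}_\infty$ embeds into $\hgl_\infty$ as the fixed-point subalgebra of an involution determined by the symplectic form $J_{d/2}$, and under the $\hgl_\infty$-action on $\F^{d/2}$ of \propref{duality}, its Chevalley generators $\widetilde{E}_n$ become explicit normal-ordered quadratic expressions in the fermion modes $\psi^{\pm,i}_r$ with $i = 1, \ldots, d/2$. Simultaneously, ${\rm Sp}(d)$ acts on $\F^{d/2}$ by lifting its defining action on $\C^d \cong {\rm span}\{\psi^{\pm,i}_{-1/2} : 1 \le i \le d/2\}$, where $\C^d$ carries the symplectic form dictated by the canonical anti-commutation relations. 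That the two actions commute is a direct computation using those anti-commutation relations.

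Second, to establish the multiplicity-free decomposition I would adopt the joint highest weight vector approach. For each partition $\la \in \mc{P}({\rm Sp}(d))$, one exhibits an explicit joint highest weight vector $v_\la \in \F^{d/2}$, built as a suitable polynomial in the negative fermion modes whose shape is dictated by $\la$, and verifies that $v_\la$ has $\mathfrak{c}_\infty$-weight $\La^{\mf c}(\la)$ and ${\rm Sp}(d)$-highest weight $\la$. Completeness then follows from semisimplicity: the $\mathfrak{c}_\infty$-action on $\F^{d/2}$ is unitarizable and integrable of level $\frac{d}{2}$, and finite-dimensional ${\rm Sp}(d)$-modules are completely reducible, so each $\mathfrak{c}_\infty \times {\rm Sp}(d)$-isotypic component occurs with non-negative multiplicity, and the exhaustive enumeration of joint highest weight vectors forces these multiplicities to equal one. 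As a consistency check, tracing $\prod_n x_n^{\widetilde{E}_n}\prod_i z_i^{e_i}$ on both sides of \eqnref{eq:dual-c} should reduce to a known Littlewood-type Cauchy identity for symplectic Schur functions.

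The main obstacle I anticipate lies in the second step: producing a complete family of joint highest weight vectors and verifying that they are annihilated by the positive nilradical of $\mathfrak{c}_\infty$. Unlike the type-$A$ case, where the highest weight vectors can be taken as simple monomials in the lowest fermion modes, the type-$C$ realization mixes $\psi^+$ and $\psi^-$ via the defining involution, so the positive simple root vectors of $\mathfrak{c}_\infty$ include combinations that do not obviously annihilate monomial candidates. Once suitable antisymmetrized combinations are identified, reading off the $\mathfrak{c}_\infty$-weight and checking that it equals $\La^{\mf c}(\la) = \frac{d}{2}\La^{\mf c}_0 + \sum_{k \ge 1} \la'_k \ep_k$ is a direct weight computation, and the matching ${\rm Sp}(d)$-highest weight is visible from the construction.
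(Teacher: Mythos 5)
The paper does not prove Proposition~\ref{duality-c}; it is cited verbatim from \cite[Theorem 3.4]{Wa}, and no argument is given in this paper. Your sketch does reconstruct the strategy used in that reference: realize $\mathfrak{c}_\infty$ inside $\hgl_\infty$ via the symplectic involution and write its action on $\F^{d/2}$ as normal-ordered quadratics in fermion modes, lift the defining ${\rm Sp}(d)$-action, exhibit joint highest weight vectors indexed by $\mathcal P({\rm Sp}(d))$, and finish by complete reducibility and a counting/exhaustion argument. One remark on the obstacle you flag: in Wang's realization the joint highest weight vectors for the $(\mathfrak{c}_\infty,{\rm Sp}(d))$ pair are still monomials in the $\psi^{+,i}_{-r}$ modes, essentially as in type $A$; the mixing of $\psi^+$ and $\psi^-$ shows up in the raising operators of $\mathfrak{c}_\infty$ (in particular the long simple root vector attached to $-2\epsilon_1$, which is a sum of $\psi^-\psi^-$ terms), not in the candidate vectors themselves, and checking annihilation is a direct if somewhat lengthy normal-ordering computation. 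With that in hand, reading off the $\mathfrak{c}_\infty$-weight $\La^{\mf c}(\la)=\frac{d}{2}\La^{\mf c}_0+\sum_{k\ge 1}\la'_k\epsilon_k$ and the ${\rm Sp}(d)$-highest weight $\la$ is immediate, so your outline is on the right track.
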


Computing the trace of $\prod_{n\in
\N}x_n^{\widetilde{E}_n}\prod_{i=1}^\frac{d}{2} z_i^{{e}_{i}}$ on
both sides of \eqnref{eq:dual-c}, we have
\begin{equation}\label{combid-classical-c}
\prod_{i=1}^\frac{d}{2}\prod_{n\in\N}(1+x_nz_i)(1+x_{n}z^{-1}_{i})=\sum_{\la\in
{\mc P}({\rm Sp}(d))} {\rm ch}L(\mathfrak{c}_{\infty},\Lambda^{\mf
c}(\la))\; {\rm ch}V^\la_{{\rm Sp}(d)}.
\end{equation}

\subsubsection{The $(\mathfrak{d}_{\infty},{\rm O}(d))$-duality}
Write $d\in \N$ as $d=2\ell$ or $d=2\ell+1$ with $\ell \in\N$. Let
${\rm O}(d)$ denote the orthogonal group which is the subgroup of
${\rm GL}(d)$ preserving  the non-degenerate symmetric bilinear form
on $\C^{d}$ determined by $J_d$ of \eqnref{side:diagonal}. Let
$\mf{so}(d)$ be the Lie algebra of ${\rm O}(d)$. We take as a Cartan
subalgebra $H$ of $\mf{so}(d)$ the subalgebra spanned by
${e}_{i}:=e_{ii}-e_{d+1-i,d+1-i}$ ($1\leq i\leq \ell$), while we
take as the Borel subalgebra the subalgebra of upper triangular
matrices.

For $\la\in H^*$ let $\la_i=\langle\la,{e}_{i}\rangle$, for $1\le
i\le \ell$. Then a finite dimensional irreducible module of
$\mf{so}(2\ell)$ is determined by its highest weight $\la$
satisfying the condition $\la_1\geq \ldots\geq \la_{\ell-1}\geq
|\la_{\ell}|$ with either $\la_i\in\Z$ or else $\la_i\in\hf+\Z$, for
all $1\le i\le \ell$. Furthermore it lifts to a module of ${\rm
SO}(2\ell)$ if and only if $\la_i\in\Z$ for $1\le i\le \ell$. Also a
finite dimensional irreducible module of $\mf{so}(2\ell+1)$ is
determined by its highest weight $\la$ satisfying the conditions
$\la_1\geq \ldots\geq \la_{\ell}$ with either  $\la_i\in\Z_+$ or
else $\la_i\in\hf+\Z_+$, for all $1\le i\le \ell$. Furthermore it
lifts to a module of ${\rm SO}(2\ell+1)$ if and only if
$\la_i\in\Z_+$, for $1\le i\le \ell$. Put
\begin{equation*}
{\mc P}({\rm O}(d)):=\{\,\la=(\la_1,
\ldots,\la_d)\,|\,\lambda_i\in\Z_+, \ \la_1\geq\ldots \geq \la_d,\
\la'_1+\la'_2\leq d\,\}.
\end{equation*}
For $\la\in {\mc P}({\rm O}(d))$, let $\tilde{\la}$  be the
partition obtained from $\la$ by replacing its first column with
$d-\la'_1$.

Suppose that $d=2\ell$ and let $\lambda=(\la_1,\ldots,\la_{\ell},
0,\ldots, 0)\in {\mc P}({\rm O}(2\ell))$. For $\la_{\ell}>0$, the
irreducible ${\rm O}(2\ell)$-module $V_{{\rm O}(2\ell)}^{\lambda}$,
viewed as an $\mf{so}(2\ell)$-module, is isomorphic to the direct
sum of irreducible modules of highest weights
$(\la_1,\ldots,\la_{\ell})$ and $(\la_1,\ldots,-\la_{\ell})$. If
$\la_{\ell}=0$, the ${\rm O}(2\ell)$-module $V_{{\rm
O}(2\ell)}^{\lambda}$, viewed as an $\mf{so}(2\ell)$-module, is
isomorphic to the irreducible module of highest weight
$(\la_1,\ldots,\la_{\ell-1},0)$, on which the element
$\tau=\sum_{i\not=\ell,\ell+1}{e_{ii}}+e_{\ell,\ell+1}+e_{\ell+1,\ell}\in
{\rm O}(2\ell)\setminus {\rm SO}(2\ell)$ transforms trivially on
highest weight vectors. Set $V_{{\rm
O}(2\ell)}^{\tilde{\lambda}}:=V_{{\rm O}(2\ell)}^{\lambda}\otimes
{\rm det}$, where ${\rm det}$ is the one-dimensional non-trivial
module of ${\rm O}(2\ell)$.

Suppose that $d=2\ell+1$ and let $\lambda=(\la_1,\ldots,\la_{\ell},
0,\ldots, 0)\in {\mc P}({\rm O}(2\ell+1))$. Let $V_{{\rm
O}(2\ell+1)}^{\lambda}$ be the irreducible ${\rm O}(2\ell+1)$-module
isomorphic  to the irreducible module of highest weight
$(\la_1,\ldots,\la_{\ell})$ as an ${\rm SO}(2\ell+1)$-module, on
which $-I_{d}$ acts trivially. Here $I_d$ is the $d\times d$
identity matrix. Also, we let $V_{{\rm
O}(2\ell+1)}^{\tilde{\lambda}}:=V_{{\rm
O}(2\ell+1)}^{\lambda}\otimes {\rm det}$ (cf. e.g.~\cite{BT, H}).

\begin{prop} \label{duality-d} \cite[Theorems 3.2 and 4.1]{Wa}
There exists a commuting action of $\mathfrak{d}_{\infty}$ and ${\rm
O}(d)$ on $\F^{\frac{d}{2}}$. Furthermore, under this joint action,
we have
\begin{equation} \label{eq:dual-d}
\F^{\frac{d}{2}}\cong\bigoplus_{\la\in {\mc P}({\rm
O}(d))}L(\mathfrak{d}_{\infty},\La^{\mf d}(\la))\otimes V_{{\rm
O}(d)}^\la,
\end{equation}
where $\Lambda^{\mf d}(\la):=d\Lambda^{\mf d}_{0}+\sum_{k\geq 1}
\la'_k\epsilon_k$.
\end{prop}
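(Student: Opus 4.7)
The plan is to prove \propref{duality-d} by the standard Howe duality strategy: construct commuting actions of $\mf d_\infty$ and ${\rm O}(d)$ on $\F^{d/2}$, and extract the decomposition from a joint highest-weight analysis combined with a character identity.

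First I would realize $\F^{d/2}$ in terms of a single $d$-component fermionic field. For even $d=2\ell$, relabel the $\ell$ charged pairs via $\Psi^i := \psi^{+,i}$ and $\Psi^{d+1-i} := \psi^{-,i}$ for $1\le i\le \ell$; for odd $d=2\ell+1$, further set $\Psi^{\ell+1}$ equal to the neutral fermion $\phi$. The anticommutators then read $[\Psi^i_r,\Psi^j_s]=\delta_{r+s,0}(J_d)_{ij}$, so the symmetric form $J_d$ is encoded in the fermionic algebra. The $\mf d_\infty$-action is generated by the normally ordered bilinears $:\!\Psi^i_r\Psi^j_s\!:$ antisymmetrized with respect to $J_d$, and the Wick formula yields precisely the type $D$ commutation relations. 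The commuting ${\rm O}(d)$-action is the natural linear action preserving $J_d$ on the index $i$, extended to $\F^{d/2}$ as an automorphism of the underlying Clifford algebra. Commutativity is then classical invariant theory: the $\mf d_\infty$-generators are exactly the bilinear ${\rm O}(d)$-invariants in the fermion modes.

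Next I would pin down the decomposition by exhibiting joint highest weight vectors. Using the Borel subalgebra of $\mf d_\infty$ from \secref{class:subalgebras} and the upper-triangular Borel of $\mf{so}(d)$ from the setup above, for each $\la \in \mc P({\rm O}(d))$ I would construct an explicit joint singular vector by applying an appropriate monomial in the negative modes $\Psi^i_{-r}$ to the vacuum $\vac$; the Pauli exclusion principle enforces exactly the admissibility constraint $\la_1' + \la_2' \le d$. Reading off the $\h$-weight of such a vector yields $\La^{\mf d}(\la) = d\La^{\mf d}_0 + \sum_{k\ge 1}\la'_k \ep_k$, while the ${\rm O}(d)$-weight of the accompanying cyclic $\mf{so}(d)$-vector is $\la$, with the $\det$-twist accounting for the two inequivalent ${\rm O}(d)$-extensions when $\la_\ell=0$. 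Completeness is then confirmed by the character identity
\begin{equation*}
{\rm ch}\,\F^{d/2} = \sum_{\la \in \mc P({\rm O}(d))} {\rm ch}\, L(\mf d_\infty, \La^{\mf d}(\la))\cdot {\rm ch}\, V^{\la}_{{\rm O}(d)},
\end{equation*}
a Littlewood-type identity that follows from the Weyl--Kac formula for integrable $\mf d_\infty$-modules at level $d$ together with the classical orthogonal Weyl character formula.

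The main obstacle is the bookkeeping that distinguishes ${\rm O}(d)$ from ${\rm SO}(d)$ and handles the two parities of $d$ uniformly. For even $d=2\ell$ with $\la_\ell=0$, one must locate two distinct joint singular vectors of the same $\mf d_\infty$-highest weight --- one in the $+1$ and one in the $-1$ eigenspace of $\det$ on $\F^\ell$ --- corresponding to $V^\la_{{\rm O}(2\ell)}$ and $V^{\tilde\la}_{{\rm O}(2\ell)}$ respectively. For odd $d$, the neutral fermion $\phi$ imposes a $\Z/2$-grading on $\F^{\ell+\hf}$ that must be aligned with the action of the central element $-I_d \in {\rm O}(2\ell+1)$ so as to produce the correct doubling $V^\la$ versus $V^{\tilde\la}$. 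This parity bookkeeping, rather than any deeper algebraic difficulty, is the real work of the proof, and it is handled in detail in the theorems cited from \cite{Wa}.
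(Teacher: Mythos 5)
Your overall strategy---relabel the $\ell$ charged fermion pairs into a $d$-indexed fermion field encoding the form $J_d$, realize $\mf{d}_\infty$ as the ${\rm O}(d)$-invariant normally ordered bilinears, and extract the decomposition from explicit joint highest weight vectors together with a character identity---is the one in Wang's paper \cite{Wa}, which is what the present paper cites for \propref{duality-d} without supplying a proof of its own. Most of the sketch is sound: the relabeling into a $J_d$-fermion field is correct, the Pauli/integrability heuristic for the constraint $\lambda'_1+\lambda'_2\le d$ is reasonable, and the double-commutant/invariant-theory argument is the standard one.

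There is, however, a concrete error in the way you handle the even case $d=2\ell$ with $\lambda_\ell=0$. You assert that one must locate ``two distinct joint singular vectors of the same $\mf{d}_\infty$-highest weight,'' one in each $\det$-eigenspace, corresponding to $V^\lambda_{{\rm O}(2\ell)}$ and $V^{\tilde\lambda}_{{\rm O}(2\ell)}$. If two joint singular vectors shared the same $\mf{d}_\infty$-highest weight, the module $L(\mf{d}_\infty,\Lambda^{\mf d}(\lambda))$ would occur with multiplicity at least two in $\F^{d/2}$, contradicting the multiplicity-free decomposition \eqnref{eq:dual-d} you are trying to establish. In fact both $\lambda$ and $\tilde\lambda$ lie in $\mc{P}({\rm O}(2\ell))$ and the sum in \eqnref{eq:dual-d} runs over each separately; the corresponding $\mf{d}_\infty$-highest weights are $\Lambda^{\mf d}(\lambda)$ and $\Lambda^{\mf d}(\tilde\lambda)$, whose $\epsilon_1$-coefficients are $\lambda'_1$ and $d-\lambda'_1$ and hence differ precisely when $\lambda'_1\neq\ell$, i.e.\ exactly in the case $\lambda_\ell=0$ you are discussing. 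So the two joint singular vectors do lie in different $\det$-eigenspaces, but they also have \emph{distinct} $\mf{d}_\infty$-weights; there is no coincidence. (The phenomenon you seem to have in mind---two $\mathfrak{so}(2\ell)$-singular vectors with the same $\mf{d}_\infty$-weight inside a single ${\rm O}(2\ell)$-isotypic piece---occurs instead when $\lambda_\ell>0$, where $\tilde\lambda=\lambda$ and $V^\lambda_{{\rm O}(2\ell)}$ restricts reducibly to ${\rm SO}(2\ell)$.)
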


Suppose that $d=2\ell$. Computing the trace of $\prod_{n\in
\N}x_n^{\widetilde{E}_n}\prod_{i=1}^\ell z_i^{{e}_{i}}$ on
\eqnref{eq:dual-d} gives us
\begin{equation}\label{combid-classical-d1}
\prod_{i=1}^\ell\prod_{n\in\N}(1+x_nz_i)(1+x_{n}z^{-1}_{i})=\sum_{\la\in
{\mc P}({\rm O}(2\ell))} {\rm
ch}L(\mathfrak{d}_{\infty},\Lambda^{\mf d}(\la)){\rm ch}V^\la_{{\rm
O}(2\ell)}.
\end{equation}

Suppose that $d=2\ell+1$. Let $\epsilon$ be the eigenvalue of $-I_d$
on ${\rm O}(2\ell+1)$-modules satisfying $\epsilon^2=1$. From the
computation of the trace of $\prod_{n\in
\N}x_n^{{E}_{n}}\prod_{i=1}^\ell z_i^{{e}_{i}}(-I_d)$ on both sides
of \eqnref{eq:dual-d}, we obtain
\begin{equation}\label{combid-classical-d2}
\prod_{i=1}^\ell\prod_{n\in\N}(1+\epsilon x_nz_i)(1+ \epsilon
x_{n}z^{-1}_{i})(1+\epsilon x_n)=\sum_{\la\in {\mc P}({\rm
O}(2\ell+1))} {\rm ch}L(\mathfrak{d}_{\infty},\Lambda^{\mf
d}(\la)){\rm ch}V^\la_{{\rm O}(2\ell+1)}.
\end{equation}
Note that ${\rm ch}V^\la_{{\rm O}(2\ell)}$ is a Laurent polynomial
in $z_1,\ldots,z_{\ell}$ and ${\rm ch}V^\la_{{\rm O}(2\ell)}={\rm
ch}V^{\tilde{\la}}_{{\rm O}(2\ell)}$, while ${\rm ch}V^\la_{{\rm
O}(2\ell+1)}$ is the Laurent polynomial in
$z_1,\ldots,z_{\ell},\epsilon$ and ${\rm ch}V^\la_{{\rm
O}(2\ell+1)}=\epsilon~{\rm ch}V^{\tilde{\la}}_{{\rm O}(2\ell+1)}$.

\subsubsection{The $(\mf{b}_\infty,{\rm Pin}(d))$-duality} Let
$d$ be even. The Lie group ${\rm Pin}(d)$ is a double cover of ${\rm
O}(d)$, with ${\rm Spin}(d)$ as the inverse image of ${\rm SO}(d)$
under the covering map (see e.g.~\cite{BT}). An irreducible
representation of ${\rm Spin}(d)$ that does not factor through ${\rm
SO}(d)$ is an irreducible representation of $\mf{so}(d)$ of highest
weight of the form
\begin{equation}\label{hwforSpin}
(\la_1+\hf,\ldots,\la_{\frac{d}{2}-1}+\hf,\la_{\frac{d}{2}}+\hf), \
\ \text{or} \ \
(\la_1+\hf,\ldots,\la_{\frac{d}{2}-1}+\hf,-\la_{\frac{d}{2}}-\hf),
\end{equation}
where $\la_1\geq \ldots\geq \la_{\frac{d}{2}}$ with $\la_i\in\Z_+$
for $1\le i\le \frac{d}{2}$. We put
\begin{equation*}
{\mc P}({\rm Pin}(d)):=\{\,\la=(\la_1,
\cdots,\la_\frac{d}{2})\,|\,\lambda_i\in\Z_+, \ \la_1\geq\ldots \geq
\la_{\frac{d}{2}}\,\}.
\end{equation*}
For $\lambda\in {\mc P}({\rm Pin}(d))$, let us denote by
$V^{\lambda}_{{\rm Pin}(d)}$ the irreducible representation of ${\rm
Pin}(d)$ induced from the irreducible representation of ${\rm
Spin}(d)$ whose highest weight is given by either of the two weights
in \eqref{hwforSpin}. When restricted to ${\rm Spin}(d)$,
$V^{\lambda}_{{\rm Pin}(d)}$ decomposes into a direct sum of two
irreducible representations of highest weights given by those in
\eqref{hwforSpin}. The modules in \{$V^\la_{{\rm
Pin}(d)}\vert\la\in\mc{P}({\rm Pin}(d))\}$ are precisely those
finite dimensional irreducible representations of ${\rm Pin}(d)$
that do not factor through ${\rm O}(d)$.

\begin{prop} \label{duality-b} \cite[Theorem 3.3]{Wa} There exists a commuting action of $\mathfrak{b}_{\infty}$ and ${\rm Pin}(d)$ on $\F^{\frac{d}{2}}$. Furthermore, under this joint
action, we have
\begin{equation} \label{eq:dual-b}
\F^{\frac{d}{2}}\cong\bigoplus_{\la\in {\mc P}({\rm
Pin}(d))}L(\mathfrak{b}_{\infty},\La^{\mf b}(\la))\otimes V_{{\rm
Pin}(d)}^\la,
\end{equation}
where $\Lambda^{\mf b}(\la):=d\Lambda^{\mf b}_{0}+\sum_{k\geq 1}
\la'_k\epsilon_k$.
\end{prop}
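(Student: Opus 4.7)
\emph{Proof proposal.} The plan is to mirror the template already established for the type $C$ and $D$ cases (\propref{duality-c} and \propref{duality-d}) from \cite{Wa}: build commuting actions of $\mf{b}_\infty$ and ${\rm Pin}(d)$ on $\F^{d/2}$ directly at the level of fermion modes, use reductivity of ${\rm Pin}(d)$ to decompose $\F^{d/2}$ into ${\rm Pin}(d)$-isotypic pieces, and then identify the multiplicity space attached to each $V^\la_{{\rm Pin}(d)}$ as the irreducible highest weight $\mf{b}_\infty$-module $L(\mf{b}_\infty,\La^{\mf b}(\la))$ by matching characters.

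Concretely, I would first realize the Chevalley generators of $\mf{b}_\infty$ as explicit normal-ordered bilinears in the $d/2$ pairs of free fermions $\psi^{\pm,i}$ (together with the neutral-fermion zero mode implicit in the $\mf{b}_\infty$-setup), chosen so that each such bilinear is a ${\rm GL}(d)$-invariant element of the Clifford algebra. The ${\rm Pin}(d)$-action descends from its defining embedding into this Clifford algebra via the standard covering map ${\rm Pin}(d)\to{\rm O}(d)$. By construction every generator of $\mf{b}_\infty$ lies in the ${\rm Pin}(d)$-invariants of the Clifford algebra, so the two actions commute; this is verified on Chevalley generators by a direct bracket computation. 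Complete reducibility of $\F^{d/2}$ as a ${\rm Pin}(d)$-module is then automatic, yielding a decomposition $\F^{d/2}\cong\bigoplus_\la V^\la_{{\rm Pin}(d)}\otimes M_\la$ for suitable multiplicity spaces $M_\la$ indexed by $\la\in\mc{P}({\rm Pin}(d))$.

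The remaining work is to show $M_\la\cong L(\mf{b}_\infty,\La^{\mf b}(\la))$. For this, I would produce an explicit joint highest weight vector in each isotypic component, obtained by applying the ${\rm Pin}(d)$-highest weight projection to a fermionic monomial read off column-by-column from the partition $\la$ and multiplied by the appropriate zero-mode factor landing in the correct spinor sector. A direct computation of the weight of this vector under the Cartan subalgebra of $\mf{b}_\infty$ yields $\La^{\mf b}(\la)=d\La^{\mf b}_0+\sum_{k\ge1}\la'_k\epsilon_k$. Irreducibility of $M_\la$, together with the fact that the $\la$ appearing exhaust $\mc{P}({\rm Pin}(d))$, then follows from the character identity obtained by computing ${\rm ch}\,\F^{d/2}$ from the fermion mode expansion and comparing with $\sum_\la {\rm ch}\,L(\mf{b}_\infty,\La^{\mf b}(\la))\,{\rm ch}\,V^\la_{{\rm Pin}(d)}$, using the known integrable-character formula for $\mf{b}_\infty$. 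The main obstacle will be the tracking of the genuine spinor representations of ${\rm Pin}(d)$ --- those not descending to ${\rm O}(d)$ --- since their joint highest weight vectors involve the zero modes of the neutral fermion and the nontrivial sheeting of the cover ${\rm Pin}(d)\to{\rm O}(d)$; this is precisely the feature that differentiates the present statement from \propref{duality-d}.
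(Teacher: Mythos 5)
You are proposing a proof for a statement that the paper itself does not prove. \propref{duality-b} is cited directly as \cite[Theorem~3.3]{Wa} (Wang's 1999 paper on Fock space dualities) and the authors simply invoke it, exactly as they invoke \propref{duality}, \propref{duality-c}, and \propref{duality-d} for the other classical types. So there is no in-paper argument to compare your sketch against; the comparison to make is with Wang's original proof.

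As a reconstruction of how such a duality is established in \cite{Wa}, your outline has the right overall shape: realize the dual pair inside the Clifford algebra acting on $\F^{d/2}$, use complete reducibility under the compact group ${\rm Pin}(d)$ to split off isotypic components, exhibit joint highest weight vectors to pin down the multiplicity spaces, and close the argument by a character (or ``seesaw'') comparison. That is indeed the template used for all four entries of Column II in Table~1. However, one detail of your plan is off and would need repair before it could become an actual proof: there is no ``neutral-fermion zero mode'' sitting inside $\F^{d/2}$. In the notation of Section~\ref{classical:dualpairs}, $\F^{d/2}$ is the Fock space of $d/2$ pairs of charged fermions $\psi^{\pm,i}$ and nothing more; the neutral fermion $\phi$ only enters the spaces $\F^{\ell+\hf}$, which are not what the proposition is about. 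What distinguishes the $\mf{b}_\infty$-action on $\F^{d/2}$ from the $\mf{d}_\infty$-action on the same space is not an extra odd mode but a different rewriting of the same $d$ real (Majorana) fermions with a different moding, equivalently a different choice of graded polarization of the underlying infinite-dimensional orthogonal space. The half-integral ${\rm Pin}(d)$-weights that you correctly flag as the genuinely new feature (visible in the $(z_i^{1/2}+z_i^{-1/2})$ factors of \eqnref{combid-classical-b}) come from that re-moding, not from tensoring with $\F^{\hf}$. If you replace your ``zero-mode'' step with the correct real-fermion reparametrization, the rest of your plan matches what is carried out in \cite{Wa}; but in the context of this paper none of this is needed, since the proposition is taken as an external input.
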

Computing the trace of the operator $\prod_{n\in
\Z}x_n^{\widetilde{E}_n}\prod_{i=1}^\frac{d}{2} z_i^{{e}_{i}}$ on
both sides of \eqnref{eq:dual-b}, gives
\begin{equation}\label{combid-classical-b}
\prod_{i=1}^\frac{d}{2}\prod_{n\in\N}(z_i^{\hf}+z_i^{-\hf})(1+x_nz_i)(1+x_{n}z^{-1}_{i})=\sum_{\la\in
{\mc P}({\rm Pin}(d))} {\rm ch}L(\mathfrak{b}_{\infty},\Lambda^{\mf
b}(\la)){\rm ch}V^\la_{{\rm Pin}(d)}.
\end{equation}

\subsection{Formulas for $\mathfrak u_-$-homology groups of ${\mf
g}$-modules}\label{classical:homology}

Recall that the Weyl group $W$ can be written as $W =W_0W^0$ with
$W^0 =\bigsqcup_{k \ge 0} W^0_k$. For $\mu\in\h^*$ and $w\in W$ we
set $w\circ\mu:=w(\mu+\rho_c)-\rho_c$.

Since we have $\langle w\circ\La^{\mf
x}(\la),\alpha_j^\vee\rangle\in\Z_+$, for $w\in W^0$ and $j\in S$,
it follows that we may find partitions
$\la^{\pm}_w=((\la^{\pm}_w)_1,(\la^{\pm}_w)_2,\ldots)$ and
$\la_w=((\la_w)_1,(\la_w)_2,\ldots)$ such that $w\circ\La^{\mf
x}(\la)$ can be written as
\begin{equation*}
w\circ\La^{\mf x}(\la)=
\begin{cases}
d\Lambda^{\mf x}_0+\sum_{j>0}(\la^+_w)_j\epsilon_j-\sum_{j\ge
0}(\la^-_w)_{j+1}\epsilon_{-j}, & \text{if ${\mf
x}={\mf a}$}, \\
d\Lambda^{\mf x}_0+\sum_{j>0}(\la_w)_j\epsilon_j, & \text{if ${\mf
x}=\mf{b,d}$}, \\
\frac{d}{2}\Lambda^{\mf x}_0+\sum_{j>0}(\la_w)_j\epsilon_j, &
\text{if ${\mf x}=\mf{c}$}.
\end{cases}
\end{equation*}
The following is obtained by applying the Kostant homology formula
for integrable (=standard) modules over Kac-Moody algebra $\G$ (cf.
e.g. \cite[Theorem 3.13]{J}) and the Euler-Poincar\'e principle to
the complex for the corresponding Lie algebra homology (see
\cite[Section~2.4, (2.18)]{CK}).

\begin{prop} \label{char:schur}
We have  the following character formula:
\begin{equation*}
{\rm ch}L(\G,\La^{\mathfrak x}(\la))=\frac{1}{D^{\mathfrak
x}}\sum_{k=0}^\infty (-1)^k {\rm ch H}_k(\mathfrak
u_-;L(\G,\La^{\mathfrak x}(\la))),
\end{equation*}
where ${\rm ch H}_k({\mathfrak u}_-;L(\G,\La^{\mathfrak x}(\la)))$ is given by
\begin{equation}\label{homology:schur}
\begin{cases}
\sum_{w\in W^0_k(\mathfrak x)}s_{\la^+_w}(x_1,x_2,\ldots)
s_{\la^-_w}(x^{-1}_0,x^{-1}_{-1},\ldots), & \text{if
${\mathfrak x}={\mf a}$}, \\
\sum_{w\in W^0_k(\mathfrak x) }s_{\la_w}(x_1,x_2,\ldots), & \text{if
${\mathfrak x}=\mf{b, c,d}$},
\end{cases}
\end{equation}
and
\begin{equation*}
D^{\mathfrak x}:=
\begin{cases}
\prod_{i,j}(1-x_{-i+1}^{-1}x_j), & \text{if
${\mathfrak x}={\mf a}$}, \\
\prod_{i}(1-x_i)\prod_{i< j}(1-x_ix_j), & \text{if $\mf{x}=\mf{b}$}, \\
\prod_{i\le j}(1-x_ix_j), & \text{if } {\mathfrak x}=\mf{c},
\\ \prod_{i< j}(1-x_ix_j), & \text{if } {\mathfrak x}=\mf{d},
\end{cases}\quad i,j\in\N.
\end{equation*}
\end{prop}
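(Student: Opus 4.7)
The plan is to combine two classical tools: Kostant's $\U$-homology theorem for integrable highest weight modules over the infinite-rank Kac--Moody algebra $\G$, and the Euler--Poincar\'e principle applied to the Chevalley--Eilenberg complex computing $H_*(\U_-; L(\G,\La^{\mf x}(\la)))$. Step one produces the explicit formulas \eqnref{homology:schur}; step two then deduces the character formula by dividing through by $D^{\mf x}$.

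For the first step, note that $\La^{\mf x}(\la)$ is a non-negative integral combination of the fundamental weights of $\G$, so $L(\G, \La^{\mf x}(\la))$ is integrable. Kostant's theorem in its Kac--Moody generalization (\cite{GL}, \cite[Theorem 3.13]{J}) then gives an $\mf l$-module decomposition
$$H_k(\U_-; L(\G, \La^{\mf x}(\la))) = \bigoplus_{w\in W^0_k(\mf x)} L(\mf l, w\circ \La^{\mf x}(\la)),$$
and the minimality of the length of $w \in W^0_k$ guarantees that each $w\circ \La^{\mf x}(\la)$ is $S$-dominant, so the summands are well defined. The Levi $\mf l$ is $\hgl_\infty \oplus \hgl_\infty$ in type $\mf a$ and $\hgl_\infty$ in types $\mf b,\mf c,\mf d$; its integrable irreducible highest weight modules whose weights correspond to a partition $\mu$ have characters equal to the infinite Schur function $s_\mu$ in the appropriate variables. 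Reading off the components of $w\circ \La^{\mf x}(\la)$ as the partitions $\la^\pm_w$ or $\la_w$, in the form already recorded in the setup preceding the Proposition, yields exactly \eqnref{homology:schur}.

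For the second step, apply the Euler--Poincar\'e principle to the Chevalley--Eilenberg complex
$$\cdots \to \La^k(\U_-)\otimes L(\G, \La^{\mf x}(\la)) \to \La^{k-1}(\U_-)\otimes L(\G, \La^{\mf x}(\la)) \to \cdots$$
in a suitable completion of the weight-space Grothendieck ring (cf. \cite[Section~2.4, (2.18)]{CK}). This yields
$${\rm ch}\,L(\G, \La^{\mf x}(\la)) \cdot \prod_{\alpha\in \Delta^+(S)}\bigl(1 - e^{-\alpha}\bigr) = \sum_{k\geq 0} (-1)^k \,{\rm ch}\,H_k(\U_-; L(\G, \La^{\mf x}(\la))).$$
A case-by-case inspection of the roots of $\U_+$ (equivalently, the weights of $\U_-$), under the identification $x_n = e^{\epsilon_n}$, matches $\prod_{\alpha\in\Delta^+(S)}(1-e^{-\alpha})$ with $D^{\mf x}$ as defined in the Proposition: for instance in type $\mf c$ the roots $\epsilon_i+\epsilon_j$ with $i\leq j$ produce $\prod_{i\leq j}(1-x_ix_j)$, while in type $\mf a$ the roots $\epsilon_j-\epsilon_{-i+1}$ with $i,j\geq 1$ produce $\prod_{i,j}(1-x_{-i+1}^{-1}x_j)$, and similarly for types $\mf b,\mf d$. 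Dividing through by $D^{\mf x}$ then gives the asserted character formula.

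The main delicate point is the rigorous application of Euler--Poincar\'e in this infinite setting: the weight spaces of $\La^k(\U_-)\otimes L(\G, \La^{\mf x}(\la))$ are typically infinite-dimensional, and both the formal product expanding $D^{\mf x}$ and the alternating sum over $k$ are infinite, so one must interpret the identity in a topological completion of the weight-space Grothendieck ring in which termwise convergence in each weight makes sense. This is precisely the framework already set up in \cite[Section~2.4]{CK}; once it is in place, the rest of the argument is a straightforward combination of Kostant's theorem with the standard Schur-character identification for integrable $\hgl_\infty$-modules.
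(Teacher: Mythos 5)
Your proposal is correct and follows essentially the same route the paper indicates: apply the Kac--Moody generalization of Kostant's theorem (cf.\ \cite{GL}, \cite[Theorem 3.13]{J}) to get the $\mf l$-module decomposition of each $H_k(\U_-;L(\G,\La^{\mf x}(\la)))$ in terms of $W^0_k(\mf x)$, identify the resulting $\mf l$-characters with Schur functions, and then use the Euler--Poincar\'e principle on the Chevalley--Eilenberg complex (as in \cite[Section~2.4, (2.18)]{CK}) to absorb the denominator $D^{\mf x}=\prod_{\alpha\in\Delta^+(S)}(1-e^{-\alpha})$. One small caution: when matching $D^{\mf x}$ with the roots of $\U_\pm$, the roots in $\Delta^+(S)$ are the negatives of what you wrote (e.g.\ $-(\epsilon_i+\epsilon_j)$ in type $\mf c$, $\epsilon_{-i+1}-\epsilon_j$ in type $\mf a$), so that $e^{-\alpha}$ gives $x_ix_j$, resp.\ $x_{-i+1}^{-1}x_j$; this is just a sign bookkeeping issue and does not affect the argument.
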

Here, for a partition $\gamma$, $s_\gamma$ denotes the corresponding
Schur function.

\subsection{The general linear and ortho-symplectic superalgebras}
Let $p,q,m,n\in\Z_+$. We briefly recall the general linear
superalgebra $\gl(p+m|q+n)$ and ortho-symplectic superalgebras
$\spo(2m|2n+1)$, $\osp(2m|2n)$, $\spo(2m|2n)$ (see e.g.~\cite{K1}),
which will be called of type ${\mf a}$, $\mf{b}$, ${\mf c}$, and
${\mf d}$, respectively, for reasons of Howe dualities appearing
later on. The following notations associated with these Lie
superalgebras will be assumed throughout the paper.
\begin{itemize}
\item[$\cdot$] $\SG$: $\gl(p+m|q+n)$, $\spo(2m|2n+1)$, $\osp(2m|2n)$,
$\spo(2m|2n)$, or their central extensions in
\secref{sec:centralExt},

\item[$\cdot$] $\ov{\h}$: a Cartan subalgebra of $\SG$,

\item[$\cdot$] $\ov{I}$: an index set for simple roots for $\SG$,

\item[$\cdot$] $\ov{\Pi}=\{\,\beta_i\,|\,i\in \ov{I}\,\}$: a set
of simple roots for $\SG$,

\item[$\cdot$] $\ov{\Pi}^{\vee}=\{\,\beta_i^{\vee}\,|\,i\in \ov{I}\,\}$: a set of
simple coroots for $\SG$,

\item[$\cdot$] $\ov{\Delta}^{+}$: a set of positive
roots for $\SG$,

\item[$\cdot$] $L(\SG,\la)$: the irreducible highest weight $\SG$-module of
highest weight $\la\in\ov{\h}^*$.
\end{itemize}

\subsubsection{}

Denote by $\C^{p+m|q+n}$ the complex superspace of dimension
$(p+m|q+n)$ with basis $\{\,v_{-p},\ldots,v_{-1},
w_{-q},\ldots,w_{-1},v_1,\ldots,v_m,w_1,\ldots,w_n\,\}$, where ${\rm
deg}v_i:=\ov{0}$ and ${\rm deg}w_j:=\ov{1}$. With respect to this
basis the general linear superalgebra $\gl(p+m|q+n)$ may be regarded
as the Lie superalgebra of complex matrices $(a_{ij})$, with $i,j\in
I_{p+m|q+n}$, where
$I_{p+m|q+n}:=\{-p,\ldots,-1,1,\ldots,m\}\cup\{-\ov{q},\ldots,-\ov{1},\ov{1},\ldots,\ov{n}\}$.
For notational convenience we declare a linear ordering of
$I_{p+m|q+n}$ by setting
\begin{equation*}
-p<\cdots<-1<-\ov{q}<\cdots<-\ov{1}<1<\cdots<m<\ov{1}<\cdots<\ov{n}.
\end{equation*}
For $i\in I_{p+m|q+n}\setminus\{\bar{n}\}$, $i+1$ means the minimum
of those $j\in I_{p+m|q+n}$ with $j>i$. In a similar way one defines
$i-1$, for $i\in I_{p+m|q+n}\setminus\{-p\}$.

Denote by $E_{ij}$, $i,j\in I_{p+m|q+n}$, the elementary matrix with
$1$ at the $i$th row and the $j$th column, and zero elsewhere. Then
$\ov{\h}$ is spanned by $E_{kk}$, $k\in I_{p+m|q+n}$. Let
$\varepsilon_i$ and $\delta_j$, with
$i\in\{-p,\ldots,-1,1,\ldots,m\}$ and
$j\in\{-q,\ldots,-1,1,\ldots,n\}$, form the basis of $\ov{\h}^*$
dual to $E_{ii}$ and $E_{\ov{j},\ov{j}}$, respectively. We choose
our Borel subalgebra to be the subalgebra spanned by $E_{ij}$ with
$i\le j$. With respect to this Borel subalgebra, we have
\begin{align*}
\ov{\Pi}^{\vee}=\{\,& \beta_i^{\vee}=E_{i-1,i-1}-E_{ii} \ \ (\,i\in\{-p+1,\ldots,-\ov{1}\,\}\setminus\{-\ov{q}\}),  \\
& \beta_i^{\vee}=E_{ii}-E_{i+1,i+1} \ \ (\,i\in\{1,\ldots,\ov{n-1}\,\}\setminus\{m\}),  \\
&  \beta_{-\ov{q}}^{\vee}=E_{-1,-1}+E_{-\ov{q},-\ov{q}},\
\beta_{0}^{\vee}=E_{-\ov{1},-\ov{1}}+E_{11},\
\beta_{m}^{\vee}=E_{mm}+E_{\ov{1},\ov{1}} \, \},\allowdisplaybreaks \\
\ov{\Pi}=\{\, & \beta_{i}=\varepsilon_{i-1}-\varepsilon_{i} \
(\,i=-p+1,\ldots,-1), \ \beta_{i}=\varepsilon_i-\varepsilon_{i+1} \
(\,i=1,\ldots,m-1), \\
& \beta_{\ov{j}}=\delta_{j-1}-\delta_{j} \ (\,j=-{q}+1,\ldots,-{1}),\ \beta_{\ov{j}}=\delta_j-\delta_{j+1} \ (\,j=1,\ldots,n-1),\allowdisplaybreaks\\
& \beta_{-\ov{q}}=\varepsilon_{-1}-\delta_{-q},\ \beta_{0}=\delta_{-{1}}-\varepsilon_1,\ \beta_{m}=\varepsilon_m-\delta_1\, \}, \\
\ov{\Delta}^+ =\{\,& \varepsilon_i-\varepsilon_j,\
\varepsilon_i-\delta_j,\ \delta_i-\varepsilon_j,\ \delta_i-\delta_j
\ (i<j) \,\}.
\end{align*}
The associated Dynkin diagram is as follows: ($\bigotimes$ denotes
an odd isotropic root)

\begin{center}
\setlength{\unitlength}{0.13in}
\begin{picture}(31,4)
\put(.3,2){\makebox(0,0)[c]{$\bigcirc$}}
\put(5,2){\makebox(0,0)[c]{$\bigcirc$}}
\put(7.2,2){\makebox(0,0)[c]{$\bigotimes$}}
\put(9.4,2){\makebox(0,0)[c]{$\bigcirc$}}
\put(13.85,2){\makebox(0,0)[c]{$\bigcirc$}}
\put(16.25,2){\makebox(0,0)[c]{$\bigotimes$}}
\put(18.4,2){\makebox(0,0)[c]{$\bigcirc$}}

\put(22.9,2){\makebox(0,0)[c]{$\bigcirc$}}
\put(25.1,2){\makebox(0,0)[c]{$\bigotimes$}}
\put(27.3,2){\makebox(0,0)[c]{$\bigcirc$}}
\put(31.75,2){\makebox(0,0)[c]{$\bigcirc$}}

\put(.7,2){\line(1,0){1.2}} \put(3.2,2){\line(1,0){1.2}}
\put(5.4,2){\line(1,0){1.2}}\put(7.7,2){\line(1,0){1.1}}\put(9.82,2){\line(1,0){1}}
\put(12.2,2){\line(1,0){1.15}} \put(14.28,2){\line(1,0){1.3}}
\put(16.7,2){\line(1,0){1.2}} \put(18.81,2){\line(1,0){1.1}}
\put(25.5,2){\line(1,0){1.2}} \put(21.2,2){\line(1,0){1.1}}
\put(23.4,2){\line(1,0){1.1}}
\put(27.75,2){\line(1,0){1.1}}\put(30,2){\line(1,0){1.2}}
\put(2.6,2){\makebox(0,0)[c]{\tiny{$\cdots$}}}
\put(11.6,1.95){\makebox(0,0)[c]{\tiny{$\cdots$}}}
\put(20.6,1.95){\makebox(0,0)[c]{\tiny{$\cdots$}}}
\put(29.6,1.95){\makebox(0,0)[c]{\tiny{$\cdots$}}}

\put(.3,.8){\makebox(0,0)[c]{\tiny $\beta_{-p+1}$}}
\put(5,.8){\makebox(0,0)[c]{\tiny $\beta_{-1}$}}
\put(7.2,.8){\makebox(0,0)[c]{\tiny $\beta_{-\ov{q}}$}}
\put(9.4,.8){\makebox(0,0)[c]{\tiny $\beta_{-\ov{q}+1}$}}
\put(13.85,.8){\makebox(0,0)[c]{\tiny $\beta_{-\ov{1}}$}}
\put(16.25,.8){\makebox(0,0)[c]{\tiny $\beta_{0}$}}
\put(18.4,.8){\makebox(0,0)[c]{\tiny $\beta_1$}}

\put(22.9,.8){\makebox(0,0)[c]{\tiny $\beta_{m-1}$}}
\put(25.1,.8){\makebox(0,0)[c]{\tiny $\beta_m$}}
\put(27.3,.8){\makebox(0,0)[c]{\tiny $\beta_{\ov{1}}$}}
\put(31.75,.8){\makebox(0,0)[c]{\tiny $\beta_{\ov{n-1}}$}}
\end{picture}
\end{center}

\subsubsection{}

The ortho-symplectic Lie superalgebra $\mf{osp}(2m|2n)$ is a
subalgebra of
$\gl(2m|2n)$ 
consisting of the linear transformations that preserve a
non-degenerate even supersymmetric bilinear form
$(\cdot\vert\cdot)$, namely,
$\osp(2m|2n)=\osp(2m|2n)_{\bar{0}}\oplus\osp(2m|2n)_{\bar{1}}$
with $\osp(2m|2n)_{\kappa}$ equal to
\begin{equation*}
\{A\in\gl(2m|2n)_\kappa\vert (Av|w)=(-1)^{1+\kappa{\rm
deg}v}(v|Aw) \text{ for homogeneous } v,w \in \C^{2m|2n} \}.
\end{equation*}
Below let $(\cdot\vert\cdot)$  be the supersymmetric bilinear form
corresponding to the matrix (see (\ref{side:diagonal}) for $J_k$)
\begin{equation*}
\begin{pmatrix}
J_{2m}&0&0\\
0&0&J_{n}\\
0&-J_{n}&0
\end{pmatrix}.
\end{equation*}
The Cartan subalgebra $\ov{\h}$ of $\osp(2m|2n)$ is spanned by
$E_i:=E_{ii}-E_{2m+1-i,2m+1-i}$ and
$E_{\bar{j}}:=E_{\bar{j},\bar{j}}-E_{\ov{2n-j+1},\ov{2n-j+1}}$,
$i=1,\ldots,m$, and $j=1,\ldots,n$. We use the same notation for the
restrictions of $\varepsilon_i$ and $\delta_j$ to the Cartan
subalgebra of $\osp(2m|2n)$. Let
\begin{align*}
\ov{\Pi}^{\vee} =\{\,& \beta_0^{\vee}=-E_1-E_2,\ \beta_i^{\vee}
= E_{i}-E_{i+1} \ \ (\,i\in I_{m|n}\setminus\{\,m,\ov{n}\,\}\,),\allowdisplaybreaks  \\
& \beta_{m}^{\vee}=E_{m}+E_{\ov{1}}\},\allowdisplaybreaks \\
\ov{\Pi}=\{\,& \beta_{0}=-\varepsilon_1-\varepsilon_{2}, \
\beta_i=\varepsilon_i-\varepsilon_{i+1}\
(\,i=1, \ldots,m-1),\allowdisplaybreaks \\
& \beta_{m}=\varepsilon_m-\delta_{1},\ \beta_{\ov{j}}
 =\delta_j-\delta_{j+1} \ (\,j=1,\ldots,n-1)\, \},\allowdisplaybreaks \\
\ov{\Delta}^+ =\{\,& \pm\varepsilon_i-\varepsilon_j,\
\pm\varepsilon_i-\delta_j,\ -2\delta_i,\ \pm \delta_i-\delta_j \
(i<j) \,\}.
\end{align*}
The associated Dynkin diagram is as follows:
\begin{center}
\hskip -3cm \setlength{\unitlength}{0.16in} \medskip
\begin{picture}(24,5.5)
\put(6,0){\makebox(0,0)[c]{$\bigcirc$}}
\put(6,4){\makebox(0,0)[c]{$\bigcirc$}}
\put(8,2){\makebox(0,0)[c]{$\bigcirc$}}
\put(10.4,2){\makebox(0,0)[c]{$\bigcirc$}}
\put(14.85,2){\makebox(0,0)[c]{$\bigcirc$}}
\put(17.25,2){\makebox(0,0)[c]{$\bigotimes$}}
\put(19.4,2){\makebox(0,0)[c]{$\bigcirc$}}
\put(23.9,2){\makebox(0,0)[c]{$\bigcirc$}}
\put(6.35,0.3){\line(1,1){1.35}} \put(6.35,3.7){\line(1,-1){1.35}}
\put(8.4,2){\line(1,0){1.55}} \put(10.82,2){\line(1,0){0.8}}
\put(13.2,2){\line(1,0){1.2}} \put(15.28,2){\line(1,0){1.45}}
\put(17.7,2){\line(1,0){1.25}} \put(19.8,2){\line(1,0){0.9}}
\put(22.1,2){\line(1,0){1.4}}
\put(12.5,1.95){\makebox(0,0)[c]{$\cdots$}}
\put(21.5,1.95){\makebox(0,0)[c]{$\cdots$}}
\put(6,5){\makebox(0,0)[c]{\tiny $\beta_0$}}
\put(6,-1.2){\makebox(0,0)[c]{\tiny $\beta_1$}}
\put(8,1){\makebox(0,0)[c]{\tiny $\beta_2$}}
\put(10.4,1){\makebox(0,0)[c]{\tiny $\beta_3$}}
\put(14.5,1){\makebox(0,0)[c]{\tiny $\beta_{m-1}$}}
\put(17.15,1){\makebox(0,0)[c]{\tiny $\beta_m$}}
\put(19.5,1){\makebox(0,0)[c]{\tiny $\beta_{\ov{1}}$}}
\put(23.8,1){\makebox(0,0)[c]{\tiny $\beta_{\ov{n-1}}$}}
\end{picture}
\end{center}

\subsubsection{}

Taking the skew-supersymmetric bilinear form corresponding to the
matrix
\begin{equation*}
\begin{pmatrix}
0&J_m&0\\
-J_m&0&0\\
0&0&J_{2n}
\end{pmatrix},
\end{equation*}
we can construct similarly the Lie superalgebra $\spo(2m|2n)$, which
is isomorphic to $\osp(2n|2m)$.  Similarly we define the Cartan
subalgebra $\ov{\h}$, $\varepsilon_i$, and $\delta_j$. Let
\begin{align*}
\ov{\Pi}^{\vee}=\{\,& \beta_0^{\vee}=-E_1,\ \beta_i^{\vee}= E_{i}-E_{i+1} \ \ (\,i\in I_{m|n}\setminus\{\,m\, ,\ov{n}\}\,),  \\
& \beta_{m}^{\vee}=E_{m}+E_{\ov{1}} \},\allowdisplaybreaks \\
\ov{\Pi}=\{\,& \beta_{0}=-2\varepsilon_1, \
\beta_i=\varepsilon_i-\varepsilon_{i+1}\
(\,i=1, \ldots,m-1), \\
& \beta_{m}=\varepsilon_m-\delta_{1},\ \beta_{\ov{j}}=\delta_j-\delta_{j+1} \ (\,j=1,\ldots,n-1)\, \},\allowdisplaybreaks \\
\ov{\Delta}^+ =\{\,& -2\varepsilon_i,\
\pm\varepsilon_i-\varepsilon_j,\ \pm\varepsilon_i-\delta_j,\ \pm
\delta_i-\delta_j \ (i<j) \,\}.
\end{align*}
The associated Dynkin diagram is as follows:
\begin{center}
\hskip -3cm \setlength{\unitlength}{0.16in}
\begin{picture}(24,4)
\put(5.6,2){\makebox(0,0)[c]{$\bigcirc$}}
\put(8,2){\makebox(0,0)[c]{$\bigcirc$}}
\put(10.4,2){\makebox(0,0)[c]{$\bigcirc$}}
\put(14.85,2){\makebox(0,0)[c]{$\bigcirc$}}
\put(17.25,2){\makebox(0,0)[c]{$\bigotimes$}}
\put(19.4,2){\makebox(0,0)[c]{$\bigcirc$}}
\put(23.9,2){\makebox(0,0)[c]{$\bigcirc$}}
\put(8.35,2){\line(1,0){1.5}}
\put(10.82,2){\line(1,0){0.8}}
\put(13.2,2){\line(1,0){1.2}}
\put(15.28,2){\line(1,0){1.45}}
\put(17.7,2){\line(1,0){1.25}}
\put(19.81,2){\line(1,0){0.9}}
\put(22.1,2){\line(1,0){1.28}}
\put(6.8,2){\makebox(0,0)[c]{$\Longrightarrow$}}
\put(12.5,1.95){\makebox(0,0)[c]{$\cdots$}}
\put(21.5,1.95){\makebox(0,0)[c]{$\cdots$}}
\put(5.4,1){\makebox(0,0)[c]{\tiny $\beta_0$}}
\put(7.8,1){\makebox(0,0)[c]{\tiny $\beta_1$}}
\put(10.4,1){\makebox(0,0)[c]{\tiny $\beta_2$}}
\put(14.5,1){\makebox(0,0)[c]{\tiny $\beta_{m-1}$}}
\put(17.15,1){\makebox(0,0)[c]{\tiny $\beta_m$}}
\put(19.5,1){\makebox(0,0)[c]{\tiny $\beta_{\ov{1}}$}}
\put(23.8,1){\makebox(0,0)[c]{\tiny $\beta_{\ov{n-1}}$}}
\end{picture}
\end{center}

\subsubsection{}

The skew-supersymmetric bilinear form corresponding to the matrix
\begin{equation*}
\begin{pmatrix}
0&J_m&0\\
-J_m&0&0\\
0&0&J_{2n+1}
\end{pmatrix}
\end{equation*}
gives rise to $\spo(2m|2n+1)$.  The Cartan subalgebra $\ov{\h}$ of
$\spo(2m|2n+1)$ is spanned by $E_i:=E_{ii}-E_{2m+1-i,2m+1-i}$ and
$E_{\bar{j}}:=E_{\bar{j},\bar{j}}-E_{\ov{2n-j+2},\ov{2n-j+2}}$,
$i=1,\ldots,m$, and $j=1,\ldots,n$. We define $\varepsilon_i$, and
$\delta_j$ analogously. Let
\begin{align*}
\ov{\Pi}^{\vee}=\{\,& \beta_0^{\vee}=E_1,\ \beta_i^{\vee}= E_{i}-E_{i+1} \ \ (\,i\in I_{m|n}\setminus\{\,m\, ,\ov{n}\}\,),  \\
& \beta_{m}^{\vee}=E_{m}+E_{\ov{1}} \},\allowdisplaybreaks \\
\ov{\Pi}=\{\,& \beta_{0}=-\varepsilon_1, \
\beta_i=\varepsilon_i-\varepsilon_{i+1}\
(\,i=1, \ldots,,m-1), \\
& \beta_{m}=\varepsilon_m-\delta_{1},\ \beta_{\ov{j}}=\delta_j-\delta_{j+1} \ (\,j=1,\ldots,n-1)\, \},\allowdisplaybreaks \\
\ov{\Delta}^+ =\{\,& -\varepsilon_i,\ -2\varepsilon_i,\
\pm\varepsilon_i-\varepsilon_j,\ \pm\varepsilon_i-\delta_j,\
-\delta_j,\ \pm \delta_i-\delta_j \ (i<j) \,\}.
\end{align*}
The associated Dynkin diagram is as follows (\ \ \makebox(6,6){\circle*{9}} denotes an odd non-isotropic simple root):
\begin{center}
\hskip -3cm \setlength{\unitlength}{0.16in}
\begin{picture}(24,4)
\put(5.6,2){\circle*{0.9}} \put(8,2){\makebox(0,0)[c]{$\bigcirc$}}
\put(10.4,2){\makebox(0,0)[c]{$\bigcirc$}}
\put(14.85,2){\makebox(0,0)[c]{$\bigcirc$}}
\put(17.25,2){\makebox(0,0)[c]{$\bigotimes$}}
\put(19.4,2){\makebox(0,0)[c]{$\bigcirc$}}
\put(23.9,2){\makebox(0,0)[c]{$\bigcirc$}}
\put(8.35,2){\line(1,0){1.5}} \put(10.82,2){\line(1,0){0.8}}
\put(13.2,2){\line(1,0){1.2}} \put(15.28,2){\line(1,0){1.45}}
\put(17.7,2){\line(1,0){1.25}} \put(19.81,2){\line(1,0){0.9}}
\put(22.1,2){\line(1,0){1.28}}
\put(6.8,2){\makebox(0,0)[c]{$\Longleftarrow$}}
\put(12.5,1.95){\makebox(0,0)[c]{$\cdots$}}
\put(21.5,1.95){\makebox(0,0)[c]{$\cdots$}}
\put(5.4,1){\makebox(0,0)[c]{\tiny $\beta_0$}}
\put(7.8,1){\makebox(0,0)[c]{\tiny $\beta_1$}}
\put(10.4,1){\makebox(0,0)[c]{\tiny $\beta_2$}}
\put(14.5,1){\makebox(0,0)[c]{\tiny $\beta_{m-1}$}}
\put(17.15,1){\makebox(0,0)[c]{\tiny $\beta_m$}}
\put(19.5,1){\makebox(0,0)[c]{\tiny $\beta_{\ov{1}}$}}
\put(23.8,1){\makebox(0,0)[c]{\tiny $\beta_{\ov{n-1}}$}}
\end{picture}
\end{center}

\subsection{Reductive dual pairs $(\SG, G)$}\label{dualpairs:finite}

Below we recall Howe dualities involving the Lie superalgebras
$\gl(p+m|q+n)$, $\spo(2m|2n+1)$, $\osp(2m|2n)$ and $\spo(2m|2n)$.

%
%

Let $d\in\N$. Suppose that $\la$ is a generalized partition of depth
$d$ with $\la_{m+1}\le n$ and $\la_{d-p}\ge -q$. Then
$\la^+_{m+1}\le n$, and $\la^-_{d-p}\le q$ (recall $\la^\pm$ from
Section~\ref{sec:notation}). Define $\La^{\mf a}_f(\la)\in\ov{\h}^*$
to be
\begin{align*}\label{htwt:glpmqn}
\La^{\mf a}_f(\la) & :=
-\sum_{i=-p}^{-1}\left(\langle\la^-_{d+i+1}-q\rangle+d\right)\varepsilon_i\allowdisplaybreaks
\\ & -
\sum_{j=-q}^{-1}\left((\la^-)'_{-j}-d\right)\delta_j
 +\sum_{i=1}^m \la^+_i\varepsilon_i+\sum_{j=1}^n\langle(\la^+)'_j-m\rangle \,
 \delta_j.
 \nonumber
\end{align*}

The following Howe duality was built on the special case when
$p=q=0$ obtained in \cite[Theorem 3.2]{CW1} (also cf. \cite{S2}).
According to \cite{CLZ} $S(\C^{p|q*}\otimes
\C^{d*}\oplus\C^{m|n}\otimes \C^d)$ carries a natural commuting
action of $\gl(p+m|q+n)$ and ${\rm GL}(d)$, which form a reductive
dual pair in the sense of \cite{H}.
\begin{prop}\label{glpmqn:duality} \cite[Theorem 3.3]{CLZ}
As a $\gl(p+m|q+n)\times{\rm GL}(d)$-module we have
\begin{equation*}
S(\C^{p|q*}\otimes \C^{d*}\oplus\C^{m|n}\otimes \C^d)\cong
\bigoplus_{\la}L(\gl(p+m|q+n),\La^{\mf a}_f(\la))\otimes V_{{\rm
GL}(d)}^\la,
\end{equation*}
where the summation is over all $\la\in\mathcal{P}({\rm GL}(d))$,
subject to the conditions $\la_{m+1}\le n$ and $\la_{d-p}\ge
-q$.
\end{prop}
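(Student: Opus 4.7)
The plan is to reduce the general case of the proposition to the already-established $p=q=0$ case of \cite[Theorem 3.2]{CW1} (which decomposes $S(\C^{m|n}\otimes \C^d)$ under $\gl(m|n)\times {\rm GL}(d)$) together with its contragredient version for $S(\C^{p|q*}\otimes \C^{d*})$ under $\gl(p|q)\times {\rm GL}(d)$, and then to recognize the resulting $\gl(p|q)\oplus \gl(m|n)$-isotypic decomposition as the restriction of an irreducible $\gl(p+m|q+n)$-decomposition. The first input decomposes the symmetric algebra on the ``polynomial'' side, indexed by partitions $\nu$ with $\nu_{m+1}\le n$, and the second decomposes the algebra on the ``dual'' side, indexed by partitions $\mu$ with $\mu_{p+1}\le q$. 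Since the ${\rm GL}(d)$-action on the tensor product is diagonal, I would collect terms by the ${\rm GL}(d)$-types: the tensor products $V_{{\rm GL}(d)}^{\mu*}\otimes V_{{\rm GL}(d)}^{\nu}$ of a dual-polynomial and a polynomial representation decompose into rational $V_{{\rm GL}(d)}^{\la}$ with Littlewood--Richardson-type multiplicities, and the condition that $\la$ arise with non-zero multiplicity forces precisely $\la_{m+1}\le n$ and $\la_{d-p}\ge -q$ (equivalently, $\la^+_{m+1}\le n$ and $\la^-_{d-p}\le q$).

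The central step---and the real content of the proposition beyond Cauchy-type combinatorics---is to exhibit, inside each ${\rm GL}(d)$-isotypic component labelled by $\la$, a joint highest weight vector for the full $\gl(p+m|q+n)\times {\rm GL}(d)$ with $\gl(p+m|q+n)$-weight equal to $\La^{\mf a}_f(\la)$. The natural candidate is a product of ``generalized minors'' built from basis vectors of $\C^{m|n}\otimes \C^d$ (for the part of $\la$ corresponding to columns of $\la^+$) and from dual basis vectors of $\C^{p|q*}\otimes \C^{d*}$ (for $\la^-$), bonded along the ${\rm GL}(d)$-index by standard Young symmetrizers for the shape $\la$. A direct weight calculation then matches $\La^{\mf a}_f(\la)$; in particular the shifts by $d$ and $-d$, and the terms $\langle\la^-_{d+i+1}-q\rangle$ and $\langle (\la^+)'_j -m\rangle$, reflect the constant contribution of ``fully filled columns'' of size $p$, $q$, $m$, $n$ together with the truncations imposed by the boundary conditions $\la_{m+1}\le n$, $\la_{d-p}\ge -q$.

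The main obstacle will be verifying that this vector is annihilated by the off-diagonal (``mixed'') positive root vectors of $\gl(p+m|q+n)$, namely the operators of the form $E_{ij}$ with $i\in\{-p,\ldots,-\ov{1}\}$ and $j\in\{1,\ldots,\ov n\}$, which simultaneously contract a vector from $\C^{m|n}\otimes \C^d$ against a covector from $\C^{p|q*}\otimes\C^{d*}$. This reduces to showing that the pairings arising from such contractions vanish on the symmetrizer picking out the shape $\la$; the conditions $\la_{m+1}\le n$ and $\la_{d-p}\ge -q$ are what make these contractions either zero by antisymmetrization or by dimension, and they are precisely the constraints for the candidate vector to be non-zero. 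Once the highest weight vector is in place, irreducibility of each isotypic component under $\gl(p+m|q+n)$ follows from the fact that $\gl(p+m|q+n)$ and ${\rm GL}(d)$ generate mutual commutants in $S(\C^{p|q*}\otimes \C^{d*}\oplus \C^{m|n}\otimes \C^d)$ (so that each ${\rm GL}(d)$-isotypic component is $\gl(p+m|q+n)$-isotypic, hence irreducible by uniqueness of the highest weight vector), and equality of characters with $\sum_\la {\rm ch}\, L(\gl(p+m|q+n),\La^{\mf a}_f(\la))\cdot {\rm ch}\, V^\la_{{\rm GL}(d)}$ then completes the identification.
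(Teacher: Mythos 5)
The paper does not prove this proposition; it is quoted from \cite[Theorem 3.3]{CLZ}, with only the remark that the result was ``built on the special case when $p=q=0$ obtained in \cite[Theorem 3.2]{CW1}.'' So there is no in-paper argument to compare against, and your proposal has to be judged on its own.

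Your outline has the right skeleton --- start from the $p=q=0$ duality of \cite{CW1} on $S(\C^{m|n}\otimes\C^d)$ and its contragredient on $S(\C^{p|q*}\otimes\C^{d*})$, group ${\rm GL}(d)$-isotypes, produce a joint highest weight vector, and invoke the double commutant theorem --- and this is the standard route for such Howe duality statements. But two of the steps you describe are asserted rather than established, and as written they do not quite go through. First, the decomposition of $V^{\mu*}_{{\rm GL}(d)}\otimes V^{\nu}_{{\rm GL}(d)}$ into rational $V^{\la}_{{\rm GL}(d)}$ is not governed by a raw Littlewood--Richardson count; one needs the rational (``stable'' plus modification-rule) tensor product rule, and the conditions $\la_{m+1}\le n$, $\la_{d-p}\ge -q$ do not drop out of ``non-zero multiplicity in $V^{\mu*}\otimes V^\nu$.'' What actually pins down the index set is the super Cauchy identity for the ${\rm GL}(d)$-character of the full Fock module, and that should be the mechanism you invoke rather than an LR argument on individual $(\mu,\nu)$ pairs. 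Second, and more seriously, the key claim that your product-of-minors vector is annihilated by the mixed positive root vectors $E_{ij}$ with $i\in\{-p,\ldots,-\bar 1\}$ and $j\in\{1,\ldots,\bar n\}$ is precisely what has to be checked, and you do not do it; the parenthetical that the hook conditions ``are what make these contractions either zero by antisymmetrization or by dimension'' conflates the role of the hook conditions (which control nonvanishing of the \emph{polynomial} and \emph{dual-polynomial} minors on each factor separately) with the vanishing of the cross contractions. Finally, the passage from ``a highest weight vector of the right weight exists'' to ``each ${\rm GL}(d)$-isotypic component is an irreducible $\gl(p+m|q+n)$-module'' is where the full force of the double commutant theorem is needed, so the irreducibility should be stated as a consequence of ${\rm GL}(d)$ and the image of $U(\gl(p+m|q+n))$ being mutual commutants, rather than by ``uniqueness of the highest weight vector'' alone. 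In short: your plan is the right one and could be made rigorous, but as a proof it has a genuine gap at exactly the steps that carry the weight (the annihilation computation and the character/Cauchy bookkeeping), and this is presumably why the paper simply cites \cite{CLZ}.
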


Let $d$ be even. On the superspace $S(\C^{m|n}\otimes \C^d)$ we have
natural actions of $\osp(2m|2n)$ and ${\rm Sp}(d)$, which form a
reductive dual pair \cite[Section 3]{H}.

\begin{prop}\label{osp:duality} \cite[Theorem 5.2]{CZ2}
As an $\osp(2m|2n)\times{\rm Sp}(d)$-module we have
\begin{equation*}
S(\C^{m|n}\otimes \C^d)\cong \bigoplus_{\la} L(\osp(2m|2n),\La^{\mf
c}_f(\la))\otimes V_{{\rm Sp}(d)}^\la,
\end{equation*}
where the summation is over all $\la\in\mathcal{P}({\rm Sp}(d))$
with $\la_{m+1}\le n$, and
$$
\La^{\mf c}_f(\la) :=\sum_{i=1}^m(\la_i+ \hf d)\varepsilon_i
+\sum_{j=1}^n(\langle\la'_j-m\rangle- \hf d) \delta_j.
$$
\end{prop}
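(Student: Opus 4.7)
The plan is to establish the duality by the standard Howe-strategy of constructing joint highest weight vectors, adapted to the Lie superalgebra setting. First, realize $S(\C^{m|n}\otimes \C^d)$ as an oscillator module of $\osp(2m|2n)$ by polarizing the even skew-supersymmetric form on $\C^{2m|2n}\otimes \C^d$ obtained by pairing the supersymmetric form on $\C^{2m|2n}$ with the symplectic form on $\C^d$ (on the bosonic tensor factor, symmetric $\otimes$ skew becomes skew; on the fermionic tensor factor, skew $\otimes$ skew becomes symmetric). In this realization $\osp(2m|2n)$ acts via normal-ordered quadratics in the associated Weyl--Clifford superalgebra, and the functorial action of $\mathrm{Sp}(d)$ on $\C^d$ tautologically commutes with it. Since $\mathrm{Sp}(d)$ is reductive and acts locally finitely, we decompose
\[
S(\C^{m|n}\otimes \C^d) \cong \bigoplus_{\la\in\mathcal P(\mathrm{Sp}(d))} M_\la \otimes V^\la_{\mathrm{Sp}(d)},
\]
where $M_\la:=\mathrm{Hom}_{\mathrm{Sp}(d)}(V^\la_{\mathrm{Sp}(d)}, S(\C^{m|n}\otimes\C^d))$ carries a commuting $\osp(2m|2n)$-action, so the task reduces to identifying $M_\la$ with $L(\osp(2m|2n),\Lambda^{\mf c}_f(\la))$.

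The heart of the argument is to exhibit, for each $\la\in\mathcal P(\mathrm{Sp}(d))$ with $\la_{m+1}\le n$, an explicit joint $\osp(2m|2n)\times \mathrm{Sp}(d)$-highest weight vector $v_\la\in S(\C^{m|n}\otimes\C^d)$. Using the even generators $x_{ik}$ ($1\le i\le m$) and odd generators $\xi_{jk}$ ($1\le j\le n$) of $S(\C^{m|n}\otimes \C^d)$, one builds $v_\la$ as a product of $\mathrm{Sp}(d)$-determinantal polynomials in the $x_{ik}$'s (producing the parts $\la_1,\ldots,\la_m$) with fermionic monomials in the $\xi_{jk}$'s (producing the conjugate tail $\langle\la'_j-m\rangle$ for $j=1,\ldots,n$). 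The hook condition $\la_{m+1}\le n$ is exactly what prevents the fermionic factors from containing a repeated odd variable, ensuring $v_\la\neq 0$. Computing the Cartan action on $v_\la$ then yields the weights $\la_i$ and $\langle\la'_j-m\rangle$ from the combinatorics of the construction, together with the $\pm\frac{d}{2}$-shifts coming from normal-ordering the quadratic Cartan generators on the oscillator vacuum (the super-analogue of the metaplectic $\frac12$-shift); the total matches $\Lambda^{\mf c}_f(\la)$ on the nose.

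To conclude that $M_\la\cong L(\osp(2m|2n),\Lambda^{\mf c}_f(\la))$ and that no $\la$ outside the hook contributes, one appeals to the first fundamental theorem of invariant theory for $\mathrm{Sp}(d)$ --- which identifies the $\mathrm{Sp}(d)$-invariants in $S(\C^{m|n}\otimes\C^d)$ with the subalgebra generated by the quadratic invariants, forcing the $\osp(2m|2n)$-submodule generated by $v_\la$ to fill $M_\la$ --- combined with a character comparison of $S(\C^{m|n}\otimes\C^d)$ against $\sum_\la \mathrm{ch}\,L(\osp(2m|2n),\Lambda^{\mf c}_f(\la))\cdot\mathrm{ch}\,V^\la_{\mathrm{Sp}(d)}$. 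The main obstacle is the construction step: producing nonzero $v_\la$ for hook-shaped $\la$ with $\la_{m+1}>0$ requires careful coupling of the bosonic determinantal and fermionic monomial parts, and the weight computation demands meticulous bookkeeping of the $\tfrac{d}{2}$-shifts. Once $v_\la$ is in hand, the decomposition and weight identification are essentially mechanical.
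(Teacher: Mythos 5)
The paper does not give its own proof of this proposition; it is cited directly from \cite[Theorem 5.2]{CZ2}. So the only internal comparison available is with the closely analogous new duality for $\spo(2m|2n+1)\times\mathrm{Pin}(d)$, which \emph{is} proved here (Theorem~\ref{spopin-duality} in Appendix~A) and whose argument mirrors that of \cite{CZ2}.

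Your overall strategy --- isotypic decomposition under $\mathrm{Sp}(d)$, construction of joint highest weight vectors, and an FFT/double-commutant step to identify the multiplicity spaces --- is the right one and is consistent with the cited proof. However, two points deserve more care. First, the passage ``the first fundamental theorem of invariant theory \dots forcing the $\osp(2m|2n)$-submodule generated by $v_\la$ to fill $M_\la$'' conflates two separate facts: what the FFT actually supplies is that the image of $U(\osp(2m|2n))$ generates $\mathrm{End}_{\mathrm{Sp}(d)}(S(\C^{m|n}\otimes\C^d))$; it is the double-commutant theorem applied to this that makes each $M_\la$ irreducible. Once that is in place, the character comparison you append is redundant. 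Second, the step you describe as ``essentially mechanical'' is in fact where most of the work lives. Your $v_\la$ is built to be a $\gl(m|n)\times\mathrm{Sp}(d)$-highest weight vector (this is the $\mathrm{GL}$-duality construction of \cite{CW1}), but one must still verify that it is annihilated by the extra nilpotent $\Delta$-type operators in $\ov{\mf u}_+\subset\osp(2m|2n)$ that are not in $\gl(m|n)$. The technique used in the paper's Appendix to handle exactly this issue is to reduce to the space of harmonics (where $L(\SG,\mu)^{\ov{\mf u}_+}\cong L(\gl(m|n),\mu)$) and to first treat the stable $n=\infty$ case before specializing variables; your outline omits this reduction entirely, and without it or an explicit annihilation check the claim that $v_\la$ is $\osp$-highest is not established.
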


Let $d$ be even or odd. On the superspace $S(\C^{m|n}\otimes
\C^d)$ we have a natural action of $\spo(2m|2n)$ and ${\rm O}(d)$,
which form a reductive dual pair \cite[Section 3]{H}.

\begin{prop}\label{spo:duality} \cite[Theorem 5.1]{CZ2}
Let $d=2\ell$ or $2\ell+1$ with $\ell \in \Z_+$. As an
$\spo(2m|2n)\times{\rm O}(d)$-module we have
\begin{equation*}
S(\C^{m|n}\otimes \C^d)\cong \bigoplus_{\la}
L(\spo(2m|2n),\La^{\mf d}_f(\la))\otimes V_{{\rm O}(d)}^\la,
\end{equation*}
where the sum is over all $\la\in\mathcal{P}({\rm O}(d))$ with
$\la_{m+1}\le n$, and
$$
\La^{\mf d}_f(\la) :=\sum_{i=1}^m(\la_i+ \hf
d)\varepsilon_i+\sum_{j=1}^n(\langle\la'_j-m\rangle- \hf
d)\delta_j.
$$
\end{prop}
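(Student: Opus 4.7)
The strategy is to follow the classical Howe-duality template, adapted to the superalgebra setting: realize $S(\C^{m|n}\otimes\C^d)$ as an oscillator module over $\spo(2m|2n)$, use invariant theory for $\mathrm{O}(d)$ to show that $\spo(2m|2n)$ generates the full commutant of $\mathrm{O}(d)$ on this Fock-type module, and then identify each isotypic component by pinning down the joint highest weight vector.

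First I would set up the actions. Decomposing $S(\C^{m|n}\otimes\C^d)\cong S(\C^m\otimes\C^d)\otimes\bigwedge(\C^n\otimes\C^d)$, the Weyl algebra on $\C^m\otimes\C^d$ and the Clifford algebra on $\C^n\otimes\C^d$ combine into the oscillator representation of $\spo(2m|2n)$; quadratic expressions in the oscillators (symmetric pairs in the bosons, skew-symmetric pairs in the fermions, mixed pairs for the odd part of $\spo$) realize the Chevalley generators. The $\mathrm{O}(d)$ acts tautologically through $\C^d$ preserving the symmetric form encoded by $J_d$. Commutativity of the two actions is immediate from the fact that they act on disjoint tensor factors.

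Second, I would prove the double-commutant statement: the subalgebra of $\mathrm{End}(S(\C^{m|n}\otimes\C^d))$ generated by $\spo(2m|2n)$ equals the $\mathrm{O}(d)$-invariants in the Weyl--Clifford superalgebra. This is the core input and uses the first fundamental theorem of invariant theory for $\mathrm{O}(d)$: all $\mathrm{O}(d)$-invariants in the Weyl--Clifford superalgebra are polynomials in the contractions $\sum_k x_{ik}y_{jk}$ formed via $J_d$, and these contractions span precisely the image of $\spo(2m|2n)$ under the oscillator map. Complete reducibility of $S(\C^{m|n}\otimes\C^d)$ as an $\mathrm{O}(d)$-module (immediate from $\mathrm{O}(d)$-compactness) then yields a decomposition
\[
S(\C^{m|n}\otimes\C^d)=\bigoplus_\la M_\la\otimes V_{\mathrm{O}(d)}^\la,
\]
and the double-commutant property forces each multiplicity space $M_\la$ to be an irreducible $\spo(2m|2n)$-module.

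Third, I would identify $M_\la$ as $L(\spo(2m|2n),\La^{\mf d}_f(\la))$ by producing a joint highest-weight vector. The vacuum $1$ is a highest-weight vector of $\spo(2m|2n)$-weight $\tfrac{d}{2}(\varepsilon_1+\cdots+\varepsilon_m)-\tfrac{d}{2}(\delta_1+\cdots+\delta_n)$, matching the $\la=\emptyset$ case of the formula. For general $\la\in\mc{P}(\mathrm{O}(d))$, take the $\mathrm{O}(d)$-highest weight vector in the degree-$|\la|$ piece and apply standard determinantal-minor/Pfaffian constructions (using only the bosonic entries for the first $m$ rows, spilling into the fermionic entries once the row index exceeds $m$) to produce a vector that is killed by all positive root vectors of $\spo(2m|2n)$; the $\spo$-weight of this vector is exactly $\sum_i(\la_i+\tfrac{d}{2})\varepsilon_i+\sum_j(\langle\la'_j-m\rangle-\tfrac{d}{2})\delta_j$. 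The hook condition $\la_{m+1}\le n$ appears automatically here: any $\la$ violating it cannot support such a vector inside the $(m|n)$-hook, and conversely the ${\rm O}(d)$-isotypic components indexed by such $\la$ simply do not appear in $S(\C^{m|n}\otimes\C^d)$ (which can be verified by a character/dimension count via the $\mathrm{GL}(d)\times\mathrm{O}(d)$ branching).

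The main obstacle is the double-commutant step: for genuine Lie groups one invokes complete reducibility and Jacobson density, but here one side is a Lie superalgebra, and the argument must be carried out at the level of the Weyl--Clifford superalgebra, using the first fundamental theorem for $\mathrm{O}(d)$ together with a careful analysis in the supersymmetric setting (and in particular a check that no additional super-invariants appear beyond the classical quadratic contractions). The identification of the highest weight is then a bookkeeping exercise once this structural result is in hand.
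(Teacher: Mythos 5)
The paper does not prove this proposition: it is quoted verbatim from \cite[Theorem 5.1]{CZ2}, so there is no internal argument for you to be matched against. That said, your outline is the standard Howe-duality template and is essentially the route taken in \cite{CZ2} (and in the closely parallel appendix proof in this paper of the new $\left(\spo(2m|2n+1),{\rm Pin}(d)\right)$ duality, Theorem~\ref{spopin-duality}). A few comments on where your sketch is thin, so you know which steps carry the real weight.

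The double-commutant step is indeed the crux, and your hedging there is appropriate: the first fundamental theorem you need is the \emph{super} FFT for ${\rm O}(d)$ acting on the mixed Weyl--Clifford superalgebra, not the classical polynomial FFT. In the even part you get the symmetric contractions, in the odd part the skew contractions, and in the mixed blocks the cross contractions, and one must check that these generate all ${\rm O}(d)$-invariant differential operators and exhaust the quadratic image of $\spo(2m|2n)$. This is a theorem of Sergeev \cite{S2}, not a formality, and should be cited explicitly. Complete reducibility of $S(\C^{m|n}\otimes\C^d)$ on the $\spo(2m|2n)$ side does not come for free either (the module is infinite dimensional and $\spo(2m|2n)$ is not reductive in the naive sense); the argument used in this circle of papers is unitarizability, i.e.\ that the oscillator module carries an invariant positive-definite Hermitian form (cf.\ the remark at the start of Section~\ref{spo:pin:duality} and \cite[Prop.~4.1]{CZ2}).

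For the highest-weight identification, the cleaner path, and the one the authors use in the appendix for Theorem~\ref{spopin-duality}, is to pass to the harmonic space $\mc{H}$ (the kernel of the $\Delta$-contractions), first handle the stable case $n=\infty$, where the joint highest weight vectors are written explicitly in \cite[Theorems 4.1 and 4.2]{CW1}, and then restrict to finite $n$ by setting the extra odd variables to zero; the vectors that survive are exactly those indexed by $(m|n)$-hook $\la$. Your ``determinantal/Pfaffian'' description is morally the same thing, but the claim that the hook condition ``appears automatically'' needs to be substantiated by this restriction argument or an equivalent one. Finally, be careful with the sign conventions on the Borel for $\spo(2m|2n)$ in this paper: $-2\varepsilon_i$ and $-\varepsilon_i\pm\delta_j$ are taken to be \emph{positive} roots, so the highest weight of the vacuum is $\tfrac{d}{2}\sum_i\varepsilon_i-\tfrac{d}{2}\sum_j\delta_j$ exactly as you wrote, but a naive ``upper triangular'' intuition would get the sign wrong.
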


The following new Howe duality is worked out in Appendix
\ref{spo:pin:duality}.
\begin{prop}\label{spopin-duality1} {\rm (\thmref{spopin-duality})} Let $d$ be even.
As an $\spo(2m|2n+1)\times{\rm Pin}(d)$-module we have
\begin{equation*}
S(\C^{2m|2n+1}\otimes\C^{\frac{d}{2}})\cong \bigoplus_{\la}
L(\spo(2m|2n+1),\La^{\mf b}_f(\la))\otimes V^\la_{{\rm Pin}(d)},
\end{equation*}
where the summation is over all $\la\in\mathcal{P}({\rm Pin}(d))$
with  $\la_{m+1}\le n$, and $$\La^{\mf
b}_f(\la):=\sum_{i=1}^m(\la_i+ \hf d)\varepsilon_i
+\sum_{j=1}^n(\langle\la'_j-m\rangle- \hf d) \delta_j.$$
\end{prop}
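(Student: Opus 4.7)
The plan is to follow the standard template for super Howe dualities on Fock-type spaces, in the spirit of~\cite{H,H2,CW1,CZ2}. Set
\begin{equation*}
V:=S(\C^{2m|2n+1}\otimes\C^{\frac{d}{2}})\cong S(\C^{2m}\otimes\C^{\frac{d}{2}})\otimes\Lambda(\C^{2n+1}\otimes\C^{\frac{d}{2}}),
\end{equation*}
where the second factor, being a Fock space for $(2n+1)\cdot\frac{d}{2}$ fermionic modes, is a natural Clifford module.

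First I would construct commuting actions of $\spo(2m|2n+1)$ and ${\rm Pin}(d)$ on $V$. The even subalgebra $\spo(2m|2n+1)_{\ov 0}=\mf{sp}(2m)\oplus \mf{so}(2n+1)$ acts by quadratic expressions in the oscillators: $\mf{sp}(2m)$ through the oscillator representation on the bosonic factor, and $\mf{so}(2n+1)$ through the usual fermionic bilinears on the second factor. The group ${\rm Pin}(d)$ acts through the Clifford algebra of $\C^d$, realized on $\Lambda(\C^{2n+1}\otimes\C^{\frac{d}{2}})$ by grouping the $(2n+1)\cdot\frac{d}{2}$ fermions into $\frac{d}{2}$ copies of a $(2n+1)$-dimensional Clifford module; because the dimension $2n+1$ is odd, the central element of ${\rm Pin}(d)$ projecting to $-I\in{\rm O}(d)$ acts non-trivially, so the ${\rm Pin}(d)$-representations that arise do not factor through ${\rm O}(d)$, consistent with~\eqref{hwforSpin}. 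The odd generators of $\spo(2m|2n+1)$ are built as bilinears mixing bosonic and fermionic oscillators, and a direct bracket computation shows they commute with ${\rm Pin}(d)$ and anti-commute among themselves into the prescribed even generators.

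Next I would decompose $V$ under the joint action. The restriction to $\spo(2m|2n+1)_{\ov 0}\times{\rm Pin}(d)$ is multiplicity-free by combining the classical $(\mf{sp}(2m),{\rm O}(d))$-duality on the bosonic factor with the spinor-type duality for $\mf{so}(2n+1)\times{\rm Pin}(d)$ on the fermionic factor. One then identifies joint highest-weight vectors $v_\la$ parametrized by $\la\in\mc{P}({\rm Pin}(d))$, the constraint $\la_{m+1}\le n$ arising because no such vector exists when a column of $\la$ exceeds length $m+n$, exactly as in~\propref{spo:duality}. Applying the Cartan generators $E_i$ and $E_{\ov j}$ of $\spo(2m|2n+1)$ to each $v_\la$ yields the highest weight $\La^{\mf b}_f(\la)$: the uniform shift $\frac{d}{2}$ comes from the bosonic vacuum contribution, while the half-integer spinor shifts are absorbed on the ${\rm Pin}(d)$-side via~\eqref{hwforSpin}. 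Irreducibility of the $\spo(2m|2n+1)$-summands then follows either from standard highest weight theory or, as in~\secref{casimir:op}, from a Casimir argument.

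The main technical obstacle will be ensuring a genuine ${\rm Pin}(d)$-action rather than merely an ${\rm O}(d)$-action: the odd dimension $2n+1$ is precisely what makes the Clifford centre act non-trivially, and one must carefully track the resulting half-integer shifts so that they correctly match the highest weights of ${\rm Pin}(d)$ in~\eqref{hwforSpin}. A secondary bookkeeping point is the verification that the odd generators of $\spo(2m|2n+1)$, when expressed as mixed boson-fermion bilinears, really commute with ${\rm Pin}(d)$; this ultimately reduces to the ${\rm Pin}(d)$-invariance of the skew-supersymmetric form defining $\spo(2m|2n+1)$.
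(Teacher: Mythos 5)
Your proposal takes a genuinely different route from the paper's, but as written it has gaps that the paper's argument is specifically designed to avoid.

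The paper's proof (in Appendix~\ref{appendix:A}) does not split the space into bosonic and fermionic halves as you do. Instead it factors
$S(\C^{2m|2n+1}\otimes\C^{\frac{d}{2}})\cong S(\C^{m|n}\otimes\C^{d})\otimes S(\C^{0|1}\otimes\C^{\frac{d}{2}})$, thereby isolating the single extra odd direction: the first factor carries the already-known $(\spo(2m|2n),{\rm O}(d))$-duality of Proposition~\ref{spo:duality}, and the second factor $\La(\C^{\frac d2})$ is exactly the spin module that converts ${\rm O}(d)$ to ${\rm Pin}(d)$ and $\spo(2m|2n)$ to $\spo(2m|2n+1)$. That factorization lets the authors piggy-back on an established super Howe duality rather than rebuilding one from scratch, and it is what your decomposition $S(\C^{2m}\otimes\C^{\frac d2})\otimes\La(\C^{2n+1}\otimes\C^{\frac d2})$ gives up.

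The more serious issues with your outline are the following. First, your assertion that the restriction to $\spo(2m|2n+1)_{\ov 0}\times{\rm Pin}(d)$ is multiplicity-free by ``combining'' the bosonic $(\mf{sp}(2m),{\rm O}(d))$-duality with the fermionic $(\mf{so}(2n+1),{\rm Pin}(d))$-duality is not justified: after tensoring and passing to the diagonal ${\rm Pin}(d)$-action, one must decompose $V_\mu^{{\rm O}(d)}\otimes V_\nu^{{\rm Pin}(d)}$, and multiplicity-freeness is lost in general. The paper instead first proves that $\spo(2m|2n+1)$ generates ${\rm End}_{\C}(\C[\mathbf x,\xi])^{{\rm Pin}(d)}$ via the explicit differential-operator realization of Section~A.1.1, and then invokes the double commutant theorem to obtain \eqref{duality-pin}; this step is entirely missing from your sketch. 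Second, your determination of the index set $\{\la:\la_{m+1}\le n\}$ and of the map $\la\mapsto\La^{\mf b}_f(\la)$ is asserted ``exactly as in Proposition~\ref{spo:duality}'' without an argument. This is the technical heart of the theorem: the paper handles it by passing to the space of harmonics $\mc H$, reducing the question to a $(\glmn,{\rm Pin}(d))$-duality on $\mc H$, taking the limit $n=\infty$ where the explicit joint highest weight vectors of \cite{CW1} are available and no constraint on $\la$ appears, and then specializing $\xi^k_j=\ov\xi^k_j=0$ for $j>n$ to deduce which highest weight vectors survive. Without something playing the role of this limiting argument, your outline does not actually establish either the parametrization or the weight formula.
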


The simple $\SG$-modules constructed in this section will be
referred to as {\em oscillator modules} of $\SG$.

From now on, we mean by $(\G,G)$ and $(\SG,G)$ one of the dual pairs
of type $\mf{x}\in\{\mf{a,b,c,d}\}$ in Sections
\ref{classical:dualpairs} and \ref{dualpairs:finite}, respectively.
We let $\mathcal P^{\SG} (G)$ be the subset of weights in $\mathcal
P (G)$ that appear in the $(\SG,G)$-duality decompositions.

\section{Character formulas of oscillator modules of Lie
superalgebras} \label{sec:char}

In this section we derive character formulas for the oscillator
representations of $\SG$, essentially only using the character
formulas of the integrable modules of infinite dimensional Lie
algebra $\G$. To that end, we introduce (trivial) central extensions
of $\SG$.

\subsection{Central extensions} \label{sec:centralExt}

Given $\mathfrak J \in \gl(p+m|q+n)$, we define
\begin{equation*}
\beta_{\mathfrak J}(A,B):=\text{Str}([\mathfrak J,A]B), \qquad
A,B\in\gl(p+m|q+n),
\end{equation*}
where $\text{Str}(c_{ij}) = \sum_{i=-p}^{-1}c_{ii}
-\sum_{j=-\bar{q}}^{-\bar{1}}c_{jj} +\sum_{i=1}^mc_{ii}
-\sum_{j=\bar{1}}^{\bar{n}} c_{jj}. $
 It follows that
$\beta_{\mathfrak J}$ is a $2$-cocycle which defines a central
extension of $\gl(p+m|q+n)$.  We note that this cocycle is a
coboundary, since we can construct this extension on the Lie
superalgebra $\gl(p+m|q+n)\oplus\C K$, with a central element $K$,
as follows. Define for $A\in\gl(p+m|q+n)$
\begin{equation*}
\widehat{A}:=A-{\rm Str}({\mathfrak J}A)K.
\end{equation*}
Then $[\widehat{A},\widehat{B}]=\widehat{[A,B]}+\beta_{\mathfrak
J}(A,B)K$.

From now on we let
\begin{equation}\label{matrixJ}
{\mathfrak J} =\begin{pmatrix}
-I_{p}&0&0&0\\
0&-I_q&0&0\\
0&0&0&0\\
0&0&0&0
\end{pmatrix}.
\end{equation}
We denote by $\hgl(p+m|q+n)$ the resulting central extension of
$\gl(p+m|q+n)$ by the one-dimensional center $\C K$. The simple
roots of the derived algebra of $\hgl(p+m|q+n)$ are the same as
those of the derived algebra of $\gl(p+m|q+n)$.  The simple coroots
are also the same except that
$\beta^\vee_{0}=E_{-\bar{1},-\bar{1}}+E_{11}$ is replaced by
$\widehat{E}_{-\bar{1},-\bar{1}}+\widehat{E}_{11}+K$.

Consider the $(2m+2n)\times(2m+2n)$ matrix of the form
\begin{equation}\label{matrixJ'}
{\mathfrak J}'=\hf\begin{pmatrix}
-I_{m}&0&0&0\\
0&I_m&0&0\\
0&0&-I_n&0\\
0&0&0&I_n\\
\end{pmatrix}.
\end{equation}
The cocycle $\beta_{{\mathfrak J}'}$ gives rise to a central
extension of $\gl(2m|2n)$ by a one-dimensional center $\C K$. The
induced central extensions of $\osp(2m|2n)$ and $\spo(2m|2n)$ by a
one-dimensional center $\C K$ are denoted by $\hosp(2m|2n)$ and
$\hspo(2m|2n)$, respectively.


Similarly we let $\widehat{\spo}(2m|2n+1)$ stand for the central
extension of $\spo(2m|2n+1)$ by the one-dimensional center $\C K$,
induced from the central extension of $\gl(2m|2n+1)$ associated with
$\beta_{\mf{J}''}$, where $\mf{J}''$ is the following
$(2m+2n+1)\times (2m+2n+1)$ matrix
\begin{equation}\label{matrixJ''}
{\mathfrak J}''=\hf\begin{pmatrix}
-I_{m}&0&0&0&0\\
0&I_m&0&0&0\\
0&0&-I_n&0&0\\
0&0&0&0&0\\
0&0&0&0&I_n\\
\end{pmatrix}.
\end{equation}

\subsection{$\hgl(p+m|q+n)$}



For later convenience of comparing with infinite dimensional Lie
algebras, we introduce an additional element $K$, which commutes
with $\gl(p+m|q+n)$ and ${\rm GL}(d)$, and regard
$S(\C^{p|q*}\otimes\C^{d*}\oplus\C^{m|n}\otimes \C^d)$ as a module
over $(\gl(p+m|q+n)\oplus\C K)\times{\rm GL}(d)$, where we declare
that $K$ acts as the scalar $d$.  For ${\mathfrak J}$ as in
(\ref{matrixJ}) set
\begin{align}\label{eijhat}
\widehat{E}_{ij}&:= E_{ij}-{\rm Str}({\mathfrak J}E_{ij})K.
\end{align}
Then $\sum_{i,j}\C\widehat{E}_{ij}\oplus\C K$ gives rise to the
central extension $\hgl(p+m|q+n)$. Now $\hgl(p+m|q+n)$ and ${\rm
GL}(d)$ form a reductive dual pair on
$S(\C^{p|q*}\otimes\C^{d*}\oplus\C^{m|n}\otimes \C^d)$.

Define $\SLa^{\mf a}_0 \in {\ov \h}^*$ by $\langle\SLa^{\mf a}_0,K
\rangle = 1 \text{ and } \langle\SLa^{\mf a}_0,E_{kk}\rangle=0
\text{ for all } k\in I_{p+m|q+n}.$ For $\la\in \mathcal{P}({\rm
GL}(d))$ with $\la_{m+1}\le n$ and $\la_{d-p}\ge -q$, we define
\begin{align*}
\widehat{\La}^{\mf a}_f(\la) &:=d\widetilde{\La}^{\mf a}_0
-\sum_{i=-p}^{-1}\langle \la^-_{d+i+1}-q\rangle{\varepsilon}_i\allowdisplaybreaks \\
& -\sum_{j=-q}^{-1}(\la^-)'_{-j}{\delta}_j + \sum_{i=1}^m
\la^+_i{\varepsilon}_i+\sum_{j=1}^n\langle(\la^+)'_j-m\rangle{\delta}_j,
\nonumber
\end{align*}
where $ \widetilde{\La}^{\mf a}_0 = \SLa^{\mf
a}_0-\sum_{i=-p}^{-1}\varepsilon_i+\sum_{j=-q}^{-1}\delta_j.$

Recall that a partition $\la\in\mc{P}^+$ is called an {\em
$(m|n)$-hook partition}, if $\la_{m+1}\le n$ \cite{BR}. For an
$(m|n)$-hook partition $\la$, we let $
\la^\natural:=\sum_{i=1}^m\la_i\varepsilon_i
+\sum_{j=1}^n\langle\la'_j-m\rangle\delta_j\in\h_{m|n}^*$.

Let $\xi:=\{\,{\xi}_1,\ldots,{\xi}_n\,\}$,
$y:=\{\,{y}_1,\ldots,{y}_m\,\}$,
${\eta}^{-1}:=\{\,{\eta}^{-1}_{-1},\ldots,{\eta}^{-1}_{-q}\,\}$, and
$x^{-1}:=\{\,{x}^{-1}_{-1},\ldots,{x}^{-1}_{-p}\,\}$ be
indeterminates. The character of $L(\gl(m|n),\la^\natural)$,
i.e.~the trace of the operator
$\prod_{i=1}^my_i^{E_{ii}}\prod_{j=1}^n\xi_j^{E_{\bar{j}\bar{j}}}$,
is given by the {\em hook Schur polynomial} \cite{BR,Se}
\begin{equation}\label{hook:def}
hs_\la(y,\xi):=
\sum_{\mu\subseteq\la}s_\mu(y_1,\ldots,y_m)s_{\la'/\mu'}(\xi_1,\ldots,\xi_n).
\end{equation}
Observe that $hs_\la(y,\xi)=hs_{\la'}(\xi,y)$.

\begin{thm}\label{superchar:A}
For $\la\in \mathcal{P}({\rm GL}(d))$ with $\la_{m+1}\le n$ and
$\la_{d-p}\ge -q$, we have
\begin{align*}
{\rm ch}L&(\hgl(p+m|q+n), \widehat{\La}^{\mf a}_f(\la)) \\
&= \frac{\sum_{k=0}^\infty (-1)^k \sum_{w\in W^0_k(\mf
a)}hs_{\la^+_w}(\xi,y)\, hs_{\la^-_w}(\eta^{-1},x^{-1})}
{\prod_{i,j,s,t}(1-{x}_{i}^{-1}{y}_s)(1+{x}_{i}^{-1}{\xi}_t)^{-1}
(1+{\eta}_{i}^{-1}{y}_s)^{-1}(1-{\eta}_{i}^{-1}{\xi}_t)}.
\end{align*}
\end{thm}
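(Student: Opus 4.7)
The plan is to deduce the formula by comparing two computations of the character of the oscillator Fock module $S(\C^{p|q*}\otimes\C^{d*}\oplus\C^{m|n}\otimes\C^d)$: one via the super-duality \propref{glpmqn:duality}, the other obtained by a formal super-specialization of variables applied to the combinatorial identity \eqnref{combid-classical1} from the $(\hgl_\infty,\mathrm{GL}(d))$-duality, with the Kostant homology formula of \propref{char:schur} supplying the expansion of $\mathrm{ch}\,L(\hgl_\infty,\La^{\mf a}(\la))$.

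First, a direct weight-trace calculation on $S(\C^{p|q*}\otimes\C^{d*}\oplus\C^{m|n}\otimes\C^d)$ gives a product identity whose right hand side, by \propref{glpmqn:duality} combined with the action of the central element $K$ of $\hgl(p+m|q+n)$ as the scalar $d$ (which, via the twist by the matrix $\mf J$ of \eqnref{matrixJ}, accounts for the shift $d\widetilde{\La}^{\mf a}_0$ sitting in $\widehat{\La}^{\mf a}_f(\la)$), equals $\sum_\la \mathrm{ch}\,L(\hgl(p+m|q+n),\widehat{\La}^{\mf a}_f(\la))\,\mathrm{ch}\,V^\la_{\mathrm{GL}(d)}$.

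The key technical tool is a ring homomorphism $\phi$ on the algebra of symmetric functions in the two doubly infinite alphabets $\{x_n\}_{n\ge 1}$ and $\{x_n^{-1}\}_{n\le 0}$, specified on power sums by
\[
\phi(p_k(x_{\ge 1})) = p_k(\xi)+(-1)^{k-1}p_k(y),\qquad \phi(p_k(x_{\le 0}^{-1})) = p_k(\eta^{-1})+(-1)^{k-1}p_k(x^{-1}).
\]
A short Cauchy-type power-sum calculation shows that $\phi(s_\la)=hs_\la(\xi;y)$ on the first alphabet, and analogously on the second alphabet producing $hs_\la(\eta^{-1};x^{-1})$; that $\phi$ converts the fermionic product $\prod_n(1+x_nz_i)$ into $\prod_s(1-y_sz_i)^{-1}\prod_t(1+\xi_tz_i)$, so that $\phi$ applied to the left hand side of \eqnref{combid-classical1} reproduces exactly the product character of the Fock module computed in the first step; and that $\phi$ transforms the Kostant denominator $\prod_{i,j\ge 1}(1-x_{-i+1}^{-1}x_j)$ into precisely the denominator appearing in the statement of the theorem.

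Equating the two expansions of the same Fock space character and using the linear independence of the rational characters $\{\mathrm{ch}\,V^\la_{\mathrm{GL}(d)}\}$ to match coefficients identifies $\mathrm{ch}\,L(\hgl(p+m|q+n),\widehat{\La}^{\mf a}_f(\la))$ with the $\phi$-image of the Kostant expression for $\mathrm{ch}\,L(\hgl_\infty,\La^{\mf a}(\la))$, which yields exactly the asserted formula. The main obstacles are expected to be (i) careful sign bookkeeping in the super-specialization, making use of the conjugation identity $hs_\la(y,\xi)=hs_{\la'}(\xi,y)$ to pin down the orientation of the hook Schur polynomials appearing in the numerator; (ii) the logarithm/power-sum manipulation needed for the denominator transformation in the infinite alphabets, including the observation that $\phi(s_\la)$ vanishes outside the appropriate hook region so that the \emph{a priori} infinite sum over $W^0_k(\mf a)$ effectively collapses; and (iii) matching the weight shift $d\widetilde{\La}^{\mf a}_0$ in $\widehat{\La}^{\mf a}_f(\la)$ with the cocycle $\beta_{\mf J}$ defining $\hgl(p+m|q+n)$, so that the two sides of the comparison use the same parametrization of the oscillator modules.
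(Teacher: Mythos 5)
Your proposal is correct and matches the paper's proof in all essentials: both start from the trace identity coming from the $(\hgl(p+m|q+n),\mathrm{GL}(d))$ duality, substitute the Kostant-type expansion of ${\rm ch}\,L(\hgl_\infty,\La^{\mf a}(\la))$ into the classical identity \eqref{combid-classical1}, and then apply a super-specialization to pass from Schur to hook Schur functions before comparing coefficients of ${\rm ch}\,V^\la_{\mathrm{GL}(d)}$. The only difference is presentational: the paper implements the super-specialization step by step (relabeling of the $x_n$ into blocks $\xi,y,\eta^{-1},x^{-1}$, applying the involution $\omega$ on the $y$- and $x^{-1}$-blocks, and then setting the extra variables to zero), whereas you package the same composite as a single ring homomorphism $\phi$ determined on power sums — which is exactly how $\omega$ acts anyway, so this is the same map phrased more abstractly.
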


\begin{proof}
Computing the trace of the operator
$\prod_{i,j,s,t}x_i^{\widehat{E}_{ii}}\eta_j^{\widehat{E}_{\bar{j},\bar{j}}}
y_s^{\widehat{E}_{ss}}\xi_t^{\widehat{E}_{\bar{t},\bar{t}}}\prod_{k=1}^d
z_k^{e_{kk}}$ on both sides of the isomorphism in
\propref{glpmqn:duality}, where $-p\le i\le -1$, $-q\le j\le -1$,
$1\le s\le m$, and $1\le t\le n$, we obtain
%
\begin{equation}\label{glpmqn:totalchar}
\prod_{k=1}^d\prod_{i,j,s,t}
\frac{(1+{\eta}_j^{-1}z_k^{-1})(1+{\xi}_{t}z_k)}{(1-{x}_i^{-1}z_k^{-1})(1-{y}_sz_k)}
= \sum_{\la}{\rm ch}L(\hgl(p+m|q+n),\widehat{\La}^{\mf
a}_f(\la)){\rm ch}V^\la_{{\rm GL}(d)},
\end{equation}
where the summation is over all $\la\in \mathcal{P}({\rm GL}(d))$
with $\la_{m+1}\le n$ and $\la_{d-p}\ge -q$.

Recall the identity \eqnref{combid-classical1} which results from
the $(\widehat{\gl}_\infty, {\rm GL}(d))$-duality. Now
(\ref{combid-classical1}) and \propref{char:schur} imply that
\begin{align*}
\prod_{k=1}^d & \prod_{n\in\N}
(1+x_nz_k)(1+x_{1-n}^{-1}z^{-1}_{k}) = \sum_{\la\in {\mc P}({\rm
GL}(d))}  {\rm ch}L(\widehat{\gl}_\infty,\La^{\mf a}(\la)) \;
{\rm ch}V^\la_{{\rm GL}(d)}\allowdisplaybreaks \\
&= \sum_{\la\in {\mc P}({\rm GL}(d))} \frac{\sum_{k=0}^\infty (-1)^k
\sum_{w\in W^0_k}s_{\la^+_w}(x_1,x_2,\ldots)
s_{\la^-_w}(x^{-1}_0,x^{-1}_{-1},\ldots)}{\prod_{i,j}(1-x_{-i+1}^{-1}x_j)}\times
{\rm ch}V^\la_{{\rm GL}(d)}.
\end{align*}
We set ${\xi}_t:=x_t$ (\,$t=1,\ldots,n$\,), ${y}_s:=x_{n+s}$
(\,$s\in\N$\,), ${\eta}_{j-1}:=x_j$ (\,$j=0,\ldots,-q+1$\,), and
${x}_{i}:=x_{-q+1+i}$ (\,$i\in-\N$\,), and rewrite the above as
\begin{align*}
\prod_{k=1}^d &
\prod_{i,j,s,t}(1+{x}^{-1}_iz^{-1}_k)(1+{\eta}_{j}^{-1}z^{-1}_{k})
(1+{y}_sz_k)(1+{\xi}_{t}z_{k})= \sum_{\la\in {\mc P}({\rm GL}(d))} {\rm ch}V^\la_{{\rm GL}(d)}\times \\
& \frac{\sum_{k=0}^\infty (-1)^k \sum_{w\in
W^0_k}s_{\la^+_w}({\xi}_1,\ldots,{\xi}_n,{y}_1,\ldots)
s_{\la^-_w}({\eta}^{-1}_{-1},\ldots,{\eta}^{-1}_{-q},{x}^{-1}_{-1},\ldots)}
{\prod_{i,j,s,t}(1-{x}_{i}^{-1}{y}_s)(1-{x}_{i}^{-1}{\xi}_t)
(1-{\eta}_{i}^{-1}{y}_s)(1-{\eta}_{i}^{-1}{\xi}_t)}.
\end{align*}

Let $\omega$ be the standard involution of symmetric functions that
interchanges elementary symmetric functions and complete symmetric
functions (e.g.~\cite[(2.7)]{Mac}). Note that $\omega$ sends $s_\mu$
to $s_{\mu'}$ and recall (\ref{hook:def}). Applying $\omega$ twice
to the above identity, once on the variables ${x}^{-1}_{-1},
{x}_{-2}^{-1},\ldots$ and another on the variables
${y}_1,{y}_2,\ldots$, we obtain
\begin{align*}
\prod_{k=1}^d\prod_{i,j,s,t}
&\frac{(1+{\xi}_{t}z_{k})(1+{\eta}_{j}^{-1}z^{-1}_{k})}{(1-{y}_sz_k)(1-{x}^{-1}_iz^{-1}_k)}
=\sum_{\la\in {\mc P}({\rm GL}(d))}{\rm ch}V^\la_{{\rm GL}(d)}\ \times\\
&\frac{\sum_{k=0}^\infty (-1)^k \sum_{w\in
W^0_k}hs_{\la^+_w}({\xi}_1,\ldots,{\xi}_n,{y}_1,\ldots)
hs_{\la^-_w}({\eta}^{-1}_{-1},\ldots,{\eta}^{-1}_{-q},{x}^{-1}_{-1},\ldots)}
{\prod_{i,j,s,t}(1-{x}_{i}^{-1}{y}_s)(1+{x}_{i}^{-1}{\xi}_t)^{-1}
(1+{\eta}_{i}^{-1}{y}_s)^{-1}(1-{\eta}_{i}^{-1}{\xi}_t)}.
\end{align*}
Finally setting $x^{-1}_{-p-i}=y_{m+i}=0$ for $i\in\N$, we get an
identity which shares the same left hand side as
(\ref{glpmqn:totalchar}). The theorem now follows from comparison of
the right hand sides and the linear independence of the characters
$\{{\rm ch}V^\la_{{\rm GL}(d)}\vert\la\in\mc{P}({\rm GL}(d))\}$.
\end{proof}

\begin{rem}
The character here, which is the trace of the corresponding
operators with hats from $\hgl(p+m|q+n)$, differs from the usual one
from $\gl(p+m|q+n)$ by exactly a factor of $\left(
\frac{\eta_{-q}\cdots\eta_{-1}}{x_{-p}\cdots x_{-1}}\right)^{d}$.
Similar remarks apply below to $\spo$/$\osp$-characters versus
$\hspo$/$\hosp$-characters, where the power $d$ here is replaced by
$\frac{d}2$.

Our formula here differs from the character formula obtained in
\cite[Theorem~5.3]{CLZ}.
\end{rem}

\begin{rem} \label{omega-ring}
Denote by $\omega^{\mf a}$ a linear extension of the composition of maps in the proof of
\thmref{superchar:A} that sends ${\rm
ch}L(\widehat{\gl}_\infty,\La^{\mf a}(\la))$ to ${\rm
ch}L(\hgl(p+m|q+n),\widehat{\La}^{\mf a}_f(\la))$. Since each map in the composite is
either an involution $\omega$ or an evaluation of some variables at
zero, $\omega^{\mf a}$ respects the multiplication of characters.
\end{rem}

\subsection{$\hosp(2m|2n)$} Suppose that $d$ is even.
Recalling that $\hosp(2m|2n)$ is the central extension induced from
$\hgl(2m|2n)$ with respect to ${\mathfrak J}'$ in \eqref{matrixJ'},
we first introduce an element $K$ that acts on $S(\C^{m|n}\otimes
\C^d)$ as the scalar $\frac{d}{2}$, and that commutes with the
actions of $\osp(2m|2n)$ and ${\rm Sp}(d)$. Setting
$$\widehat{A}:=A-{\rm Str}({\mathfrak J}'A)K\in\osp(2m|2n)\oplus\C
K,\quad A\in\osp(2m|2n),$$ gives rise to an action of $\hosp(2m|2n)$
on $S(\C^{m|n}\otimes\C^d)$. Clearly $\hosp(2m|2n)$ and ${\rm
Sp}(d)$ also form a reductive dual pair. Define an element
$\SLa^{\mf c}_0\in\ov{\h}^*$ by $\langle\SLa^{\mf c}_0,K \rangle=1$
and $\langle\SLa^{\mf c}_0,E_{i}\rangle=\langle\SLa^{\mf
c}_0,E_{\ov{j}}\rangle=0$, for all $1\le i\le m$ and $1\le j\le n$.
Further for $\la\in\mathcal{P}({\rm Sp}(d))$ with $\la_{m+1}\leq n$,
we define

\begin{equation*}
\widehat{\La}^{\mf c}_f(\la): = \frac{d}{2}\widetilde{\La}^{\mf
c}_0+\sum_{i=1}^m\la_i\varepsilon_i+\sum_{j=1}^n\langle\la'_j-m\rangle\delta_j,
\end{equation*}
where $\widetilde{\La}^{\mf c}_0 = \SLa^{\mf
c}_0+\sum_{i=1}^{m}\varepsilon_i-\sum_{j=1}^{n}\delta_j$. Then
$\{\varepsilon_i,\delta_j,\widetilde{\La}_0^{\mf c}\}$ is the basis
dual to $\{\widehat{E}_{i},\widehat{E}_{\ov{j}},K\}$.

\begin{thm}\label{superchar:OSP}
For $\la\in\mathcal{P}({\rm Sp}(d))$ with $\la_{m+1}\leq n$, we have
\begin{align*}
{\rm ch}L(\hosp(2m|2n),\widehat{\La}^{\mf
c}_f(\la))=\frac{\sum_{k=0}^\infty (-1)^k \sum_{w\in W^0_k (\mf
c)}hs_{\la_w}(\eta,x)} {\prod_{1\le i < j\le m}\prod_{1\le s \le
t\le n}(1-\eta_i\eta_j)(1-x_sx_t)(1+\eta_ix_s)^{-1}},
\end{align*}
where $\eta:=\{\,{\eta}_1,\ldots,{\eta}_n\,\}$,
$x:=\{\,{x}_1,\ldots,{x}_m\,\}$.
\end{thm}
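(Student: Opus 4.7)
The plan is to parallel the proof of \thmref{superchar:A}, invoking the two Howe dualities $(\osp(2m|2n),{\rm Sp}(d))$ of \propref{osp:duality} and $(\mathfrak{c}_\infty,{\rm Sp}(d))$ of \propref{duality-c} on the common Fock space $\F^{d/2}$, and relating them by a substitution of variables followed by the symmetric-function involution $\omega$.

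Specifically, I would first compute the trace of the operator $\prod_{i=1}^m x_i^{\widehat{E}_i}\prod_{j=1}^n \eta_j^{\widehat{E}_{\bar j}}\prod_{k=1}^{d/2} z_k^{{e}_{k}}$ on both sides of the $(\hosp(2m|2n),{\rm Sp}(d))$-decomposition obtained from \propref{osp:duality} (after letting $K$ act as the scalar $d/2$ on $S(\C^{m|n}\otimes \C^d)$). This yields the identity
\begin{equation*}
\prod_{k=1}^{d/2}\prod_{i=1}^m\prod_{j=1}^n \frac{(1+\eta_j z_k)(1+\eta_j z_k^{-1})}{(1-x_i z_k)(1-x_i z_k^{-1})} = \sum_\la {\rm ch}\,L\bigl(\hosp(2m|2n),\widehat{\La}^{\mf c}_f(\la)\bigr)\,{\rm ch}\,V^\la_{{\rm Sp}(d)},
\end{equation*}
in direct analogy with \eqnref{glpmqn:totalchar} from the type-$\mf a$ argument. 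Separately, I would take the identity \eqnref{combid-classical-c} coming from $(\mathfrak{c}_\infty,{\rm Sp}(d))$-duality, substitute the character expansion of \propref{char:schur} for ${\rm ch}\,L(\mathfrak{c}_\infty,\La^{\mf c}(\la))$ on the right-hand side, and rename the Schur variables by $\eta_t:=x_t$ for $1\le t\le n$ and $y_s:=x_{n+s}$ for $s\in\N$, so that both sides become symmetric functions in the alphabet $(\eta,y)$.

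Next I would apply the involution $\omega_y$ acting only on the $y$-variables. On the left-hand side, $\omega_y$ converts the fermionic factor $\prod_s(1+y_s z_k)(1+y_s z_k^{-1})$ into the bosonic factor $\prod_s[(1-y_s z_k)(1-y_s z_k^{-1})]^{-1}$, while the $\eta$-factor is untouched. On the right-hand side, by the defining expansion \eqnref{hook:def} we have $\omega_y s_{\la_w}(\eta,y)=hs_{\la_w}(\eta,y)$; and splitting the denominator $\prod_{i\le j}(1-x_ix_j)$ according to $x=(\eta,y)$, $\omega_y$ sends $\prod_{i,s}(1-\eta_i y_s)^{-1}$ to $\prod_{i,s}(1+\eta_i y_s)$ via the dual Cauchy identity, sends $\prod_{s\le t}(1-y_s y_t)^{-1}$ to $\prod_{s<t}(1-y_s y_t)^{-1}$ via the Littlewood identity, and fixes the pure $\eta$-factor $\prod_{1\le i\le j\le n}(1-\eta_i\eta_j)^{-1}$. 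Truncating by $y_{m+s}=0$ for $s\in\N$ produces a finite identity whose left-hand side matches that of the first step after renaming $y\leftrightarrow x$. Comparing coefficients and invoking the linear independence of $\{\,{\rm ch}\,V^\la_{{\rm Sp}(d)}\,\}_\la$ then delivers the claimed formula.

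The main obstacle, relative to the type-$\mf a$ argument, is the careful bookkeeping of $\omega_y$ on the infinite-product denominator; in particular the Littlewood identity $\omega_y\bigl[\prod_{s\le t}(1-y_s y_t)^{-1}\bigr]=\prod_{s<t}(1-y_s y_t)^{-1}$ is what accounts for the asymmetry between the $(1-\eta_i\eta_j)$-factors (with $i\le j$) and the $(1-x_s x_t)$-factors (with $s<t$) in the final denominator. The remaining steps, as in the spirit of \remref{omega-ring}, reduce to the fact that the composite map of substitutions and involutions is a ring homomorphism on the relevant character ring.
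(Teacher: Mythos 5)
Your proposal is correct and follows essentially the same route as the paper: derive one identity from the $(\hosp(2m|2n),{\rm Sp}(d))$-duality by taking a trace, substitute \propref{char:schur} into the $(\mathfrak{c}_\infty,{\rm Sp}(d))$-identity \eqnref{combid-classical-c}, and then pass between the two by splitting the alphabet, applying the involution $\omega$ to one block, and truncating. Your explicit sequencing (rename into two alphabets, then apply the partial involution $\omega_y$, then specialize the extra variables to zero) is exactly the order used in the paper's detailed type-$\mathfrak a$ proof of \thmref{superchar:A}, and it correctly carries out the more compressed description the paper gives for the type-$\mathfrak c$ case; the bookkeeping you do with the dual Cauchy and Littlewood identities for the denominator, and with $\omega_y s_{\la_w}(\eta,y)=hs_{\la_w}(\eta,y)$ for the numerator, is precisely what is needed.
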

\begin{proof}
Computing the trace of the operator
$\prod_{i,j}\eta_i^{\widehat{E}_{i}}x_j^{\widehat{E}_{\bar{j}}}\prod_{k=1}^{\frac{d}{2}}
z_k^{{e}_{k}}$ on both sides of the isomorphism in
\propref{osp:duality}, where $1\le i\le m$  and $1\le j\le n$, we
obtain
%
\begin{equation}\label{ops2m2n:totalchar}
\prod_{k=1}^{\frac{d}{2}}\prod_{i,j}
\frac{(1+\eta_jz_k^{-1})(1+\eta_{j}z_k)}{(1-x_iz_k^{-1})(1-x_iz_k)}
= \sum_{\substack{\la\in\mc{P}({\rm Sp}(d)) \\ \la_{m+1}\leq n}}{\rm
ch}L(\hosp(2m|2n),\widehat{\La}^{\mf c}_f(\la)){\rm ch}V^\la_{{\rm
Sp}(d)}.
\end{equation}
Replacing $\text{ch}\, L(\G, \La^{\mf{c}}(\la))$ in
\eqnref{combid-classical-c} by the expression in
\propref{char:schur}, we obtain an identity of symmetric functions
in variables $x_1, x_2,\ldots$. We apply to this identity the
involution on the ring of symmetric functions in $x_1,x_2,\ldots$.
Next, we replace $x_i$ by $\eta_i$, $i=1,\ldots,n$, and then
$x_{i+n}$ by $x_i$, $i\in\N$. Finally, we put $x_i=0$ for $i\geq
n+1$. Under the composition of those maps, we obtain a new identity
which shares the same left hand side as \eqnref{ops2m2n:totalchar}.
Comparing the right hand sides of this new identity and of
\eqnref{ops2m2n:totalchar}, we obtain the result thanks to the
linear independence of $\{{\rm ch}V^\la_{{\rm Sp}(d)}\vert
\la\in\mc{P}({\rm Sp}(d))\}$.
\end{proof}

We denote by $\omega^{\mf c}$ the map which sends ${\rm
ch}L(\mathfrak{c}_{\infty},\La^{\mf c}(\la))$ to ${\rm
ch}L(\hosp(2m|2n),\widehat{\La}^{\mf c}_f(\la))$ and extend
$\omega^{\mf c}$ by linearity (compare \eqnref{combid-classical-c}
and \eqnref{ops2m2n:totalchar}). Note that $\omega^{\mf c}$
respects the multiplication of characters.

\subsection{$\hspo(2m|2n)$}

Recall the central extension $\hspo(2m|2n)$ induced from
$\hgl(2m|2n)$ determined by ${\mathfrak J}'$ in \eqref{matrixJ'}.
Introduce an element $K$ that acts on $S(\C^{m|n}\otimes \C^d)$ as
the scalar $\frac{d}{2}$ and commutes with the actions of
$\spo(2m|2n)$ and ${\rm O}(d)$. Putting $\widehat{A}:=A-{\rm
Str}({\mathfrak J}'A)K\in\spo(2m|2n)\oplus\C K$, for
$A\in\spo(2m|2n)$, defines an action of $\hspo(2m|2n)$ on
$S(\C^{m|n}\otimes\C^d)$. Then $\hspo(2m|2n)$ and ${\rm O}(d)$ form
a reductive dual pair. Define an element $\SLa^{\mf
d}_0\in\ov{\h}^*$ by $\langle\SLa^{\mf d}_0,K \rangle=1$ and
$\langle\SLa^{\mf d}_0,E_{i}\rangle=\langle\SLa^{\mf
d}_0,E_{\ov{j}}\rangle=0$, for all $1\le i\le m$ and $1\le j\le n$.

When $d=2\ell$, calculating the trace of the operator
$\prod_{i,j}\eta_i^{\widehat{E}_{i}}x_j^{\widehat{E}_{\bar{j}}}\prod_{k=1}^{\ell}
z_k^{{e}_{k}}$ on both sides of the isomorphism in
\propref{spo:duality}, where $1\leq i\leq m$ and $1\leq j\leq n$,
gives
\begin{equation}\label{spo2m2n:totalchar1}
\prod_{k=1}^{\ell}\prod_{i,j}
\frac{(1+\eta_jz_k^{-1})(1+\eta_{j}z_k)}{(1-x_iz_k^{-1})(1-x_iz_k)}=
\sum_{\substack{\la\in\mc{P}({\rm O}(2\ell)) \\ \la_{m+1}\leq
n}}{\rm ch}L(\hspo(2m|2n),\widehat{\La}^{\mf d}_f(\la)){\rm
ch}V^\la_{{\rm O}(2\ell)},
\end{equation}
and when $d=2\ell+1$, the trace of the operator
$\prod_{i,j}\eta_i^{\widehat{E}_{i}}x_j^{\widehat{E}_{\bar{j}}}\prod_{k=1}^{\ell}
z_k^{{e}_{k}}(-I_d)$ gives
\begin{align}\label{spo2m2n:totalchar2}
\prod_{k=1}^{\ell}\prod_{i,j}
 & \frac{(1+\epsilon\eta_jz_k^{-1})(1+\epsilon\eta_{j}z_k)(1+\epsilon\eta_{j})}{(1-\epsilon
x_iz_k^{-1})(1-\epsilon x_iz_k)(1-\epsilon x_i)} \\
 =& \sum_{\substack{\la\in\mc{P}({\rm O}(2\ell+1)) \\ \la_{m+1}\leq
n}}{\rm ch}L(\hspo(2m|2n),\widehat{\La}^{\mf d}_f(\la)){\rm
ch}V^\la_{{\rm O}(2\ell+1)}. \nonumber
\end{align}
Here for $\la\in \mc{P}({\rm O}(d))$ with $\la_{m+1}\leq n$,
\begin{align*}
\widehat{\La}^{\mf d}_f(\la) := \dfrac{d}{2}\widetilde{\La}^{\mf
d}_0+\sum_{i=1}^m\la_i\varepsilon_i+\sum_{j=1}^n\langle\la'_j-m\rangle\delta_j,
\end{align*}
with $\widetilde{\La}^{\mf d}_0 = \SLa^{\mf
d}_0+\sum_{i=1}^{m}\varepsilon_i-\sum_{j=1}^{n}\delta_j$.

Now the next theorem follows by similar arguments as for
\thmref{superchar:OSP} using \eqref{spo2m2n:totalchar1},
\eqref{spo2m2n:totalchar2}, \eqref{combid-classical-d1} and
\eqref{combid-classical-d2} and \propref{char:schur}.

\begin{thm}\label{superchar:SPO}
Put $\eta:=\{\,{\eta}_1,\ldots,{\eta}_m\,\}$ and
$x:=\{\,{x}_1,\ldots,{x}_n\,\}$. For $\la\in\mathcal P ({\rm O}(d))$
with $\la_{m+1}\leq n$, we have
\begin{itemize}
\item[(1)] If $d=2\ell+1$, then we have
\begin{equation*}
{\rm ch}L(\hspo(2m|2n),\widehat{\La}^{\mf
d}_f(\la))=\frac{\sum_{k=0}^\infty (-1)^k \sum_{w\in
W^0_k(\mf{d})}hs_{\la_w}(\eta,x)} {\prod_{1\le i \le j\le
m}\prod_{1\le s <t\le
n}(1-\eta_i\eta_j)(1-x_sx_t)(1+\eta_ix_s)^{-1}}.
\end{equation*}

\item[(2)] If $d=2\ell$, then we have
\begin{equation*}
\begin{split}
{\rm ch}L(\hspo(2m|2n),\widehat{\La}^{\mf d}_f(\la))+&{\rm
ch}L(\hspo(2m|2n),\widehat{\La}^{\mf d}_f(\tilde{\la}))= \\
&\frac{\sum_{k=0}^\infty (-1)^k \sum_{w\in
W^0_k(\mf{d})}\bigl[hs_{\la_w}(\eta,x)+hs_{\tilde{\la}_w}(\eta,x)\bigr]}
{\prod_{1\le i \le j\le m}\prod_{1\le s <t\le
n}(1-\eta_i\eta_j)(1-x_sx_t)(1+\eta_ix_s)^{-1}}.
\end{split}
\end{equation*}
\end{itemize}
\end{thm}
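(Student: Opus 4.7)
The plan is to adapt the strategy used in the proofs of \thmref{superchar:A} and \thmref{superchar:OSP}, constructing an appropriate composition of maps $\omega^{\mf d}$ on the ring of symmetric functions. The trace identities \eqnref{spo2m2n:totalchar1} and \eqnref{spo2m2n:totalchar2} have already been extracted from \propref{spo:duality}; they relate the characters ${\rm ch}\,L(\hspo(2m|2n),\widehat{\La}^{\mf d}_f(\la))$ to ${\rm ch}\,V^\la_{{\rm O}(d)}$. On the infinite-dimensional side, substituting the expression of \propref{char:schur} into \eqnref{combid-classical-d1}--\eqnref{combid-classical-d2} writes ${\rm ch}\,L(\mf d_\infty,\La^{\mf d}(\la))$ as a ratio of Schur polynomials $s_{\la_w}(x_1,x_2,\ldots)$ over $D^{\mf d}=\prod_{i<j}(1-x_ix_j)$, producing an identity in variables $\{x_n\}$, $\{z_k\}$, and (in the odd case) $\epsilon$.

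Next, apply the standard involution $\omega$ on the ring of symmetric functions in $\{x_n\}_{n\ge 1}$, then substitute $x_i\mapsto\eta_i$ for $1\le i\le m$, relabel $x_{m+i}\mapsto x_i$ for $i\in\N$, and finally set $x_i=0$ for $i>n$. By the decomposition \eqnref{hook:def}, the image of each $s_{\la'_w}$ in the combined set of variables is precisely the hook Schur polynomial $hs_{\la_w}(\eta,x)$, while $1/D^{\mf d}$ becomes the denominator appearing in the theorem. A direct check confirms that this composition also sends the LHS of \eqnref{combid-classical-d1} (resp.~\eqnref{combid-classical-d2}) to the LHS of \eqnref{spo2m2n:totalchar1} (resp.~\eqnref{spo2m2n:totalchar2}). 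For part (1), where $d=2\ell+1$, the characters $\{{\rm ch}\,V^\la_{{\rm O}(2\ell+1)}\}_\la$ are linearly independent Laurent polynomials in $z_1,\ldots,z_\ell$ and $\epsilon$, so a term-by-term comparison of the two identities yields the claimed formula.

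For part (2), with $d=2\ell$, the main subtlety---and the chief obstacle of the argument---is the symmetry ${\rm ch}\,V^\la_{{\rm O}(2\ell)}={\rm ch}\,V^{\tilde\la}_{{\rm O}(2\ell)}$, which prevents separation of the two sides by linear independence into individual highest weights: one can only isolate the pair $\{\la,\tilde\la\}$. Regrouping both sides of the resulting identity into sums over such pairs, with the condition $\la_{m+1}\le n$ on the finite-dimensional side, and with the transformed Weyl-group sum producing hook Schur polynomials for both $\la_w$ and $\tilde\la_w$ on the other side, yields exactly the stated identity for ${\rm ch}\,L(\hspo(2m|2n),\widehat{\La}^{\mf d}_f(\la))+{\rm ch}\,L(\hspo(2m|2n),\widehat{\La}^{\mf d}_f(\tilde\la))$. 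The delicate point is the combinatorial bookkeeping needed to check that the pairing $\la\leftrightarrow\tilde\la$ used to regroup on the finite-dimensional side matches the pairing $\{\la_w,\tilde\la_w\}_{w\in W^0_k(\mf d)}$ produced by the Kostant-homology sum on the other side, including verifying that the restriction $\la_{m+1}\le n$ transports correctly under this pairing.
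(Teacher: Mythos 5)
Your proposal is correct and takes the same route as the paper: the paper's entire proof of this theorem is the one-line remark that it ``follows by similar arguments as for Theorem~\ref{superchar:OSP}'' using \eqref{spo2m2n:totalchar1}, \eqref{spo2m2n:totalchar2}, \eqref{combid-classical-d1}, \eqref{combid-classical-d2} and Proposition~\ref{char:schur}, and you carry out exactly those steps, correctly identifying the key subtlety for $d=2\ell$ that ${\rm ch}\,V^\lambda_{{\rm O}(2\ell)}={\rm ch}\,V^{\tilde\lambda}_{{\rm O}(2\ell)}$ only lets linear independence isolate the pair $\{\lambda,\tilde\lambda\}$. One small inaccuracy, inherited from the informal phrasing in the proof of Theorem~\ref{superchar:OSP}: applying $\omega$ globally and \emph{then} renaming and truncating does not send $s_{\lambda_w}$ to $hs_{\lambda_w}(\eta,x)$, nor does it match the left-hand sides of the two trace identities (test $\lambda_w=(2)$, where one obtains $e_2$ in the combined alphabet rather than the hook Schur polynomial); the precise recipe, spelled out in the proof of Theorem~\ref{superchar:A}, is to split the alphabet into two groups first and then apply $\omega$ on one group only before setting variables to zero.
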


\begin{rem} \thmref{superchar:SPO}~(2) gives the
character of the sum of two irreducible modules.  However, in the
case when $\la_1'=\ell$, we have $\tilde{\la} =\la$, and hence
$\la_w=\tilde{\la}_w$. Thus in this case it actually gives the
character of one irreducible module, i.e. we have an identity as in
(1). Similar remark applies in the sequel as well when dealing with
$G={\rm O}(2\ell)$ and $\la=\tilde{\la}$. In particular it applies
to the character formulas of Kostant homology groups in Theorems
\ref{mainthm} and \ref{mainthm-negative c}, and also to
(\ref{def:L}).

Character formulas in forms different from \thmref{superchar:OSP}
and \thmref{superchar:SPO} were also obtained in \cite[Theorems 6.2,
6.3]{CZ2} using \cite[Theorem 2.2]{En}. Our approach uses Howe
dualities involving infinite dimensional Lie algebras, thus
bypassing \cite{En}.
\end{rem}

Introduce a map $\omega^{\mf d}$ such that $\omega^{\mf d}({\rm
ch}L(\mathfrak{d}_{\infty},\La^{\mf d}(\la)))={\rm
ch}L(\hspo(2m|2n),\widehat{\La}^{\mf d}_f(\la))$ if $d=2\ell+1$,
and $\omega^{\mf d}\big[{\rm ch}L(\mathfrak{d}_{\infty},\La^{\mf
d}(\la))+{\rm ch}L(\mathfrak{d}_{\infty},\La^{\mf
d}(\widetilde{\la}))\big]={\rm
ch}L(\hspo(2m|2n),\widehat{\La}^{\mf d}_f(\la))+{\rm
ch}L(\hspo(2m|2n),\widehat{\La}^{\mf d}_f(\widetilde{\la}))$ if
$d=2\ell$. Extended by linearity, $\omega^{\mf d}$ sends either
side of \eqnref{combid-classical-d1} and
\eqnref{combid-classical-d2} to the corresponding side of
\eqnref{spo2m2n:totalchar1} and \eqnref{spo2m2n:totalchar2},
respectively.

\subsection{$\hspo(2m|2n+1)$}

Recall that $\hspo(2m|2n+1)$ is the central extension induced from
$\hgl(2m|2n+1)$ determined by ${\mathfrak J}''$ in
\eqref{matrixJ''}. Let $K$ act on $S(\C^{2m|2n+1}\otimes
\C^{\frac{d}{2}})$ as the scalar $\frac{d}{2}$ and commute with the
actions of $\spo(2m|2n+1)$ and ${\rm Pin}(d)$. Putting
$\widehat{A}:=A-{\rm Str}({\mathfrak J}''A)K\in\spo(2m|2n+1)\oplus\C
K$, for $A\in\spo(2m|2n+1)$, defines an action of $\hspo(2m|2n+1)$
on $S(\C^{2m|2n+1}\otimes\C^{\frac{d}{2}})$. Then $\hspo(2m|2n+1)$
and ${\rm Pin}(d)$ form a reductive dual pair. Define an element
$\SLa^{\mf b}_0\in\ov{\h}^*$ by $\langle\SLa^{\mf b}_0,K \rangle=1$
and $\langle\SLa^{\mf b}_0,E_{i}\rangle=\langle\SLa^{\mf
b}_0,E_{\ov{j}}\rangle=0$, for all $1\le i\le m$ and $1\le j\le n$.

Computing the trace of the operator
$\prod_{i,j}\eta_i^{\widehat{E}_{i}}x_j^{\widehat{E}_{\bar{j}}}\prod_{k=1}^{\ell}
z_k^{{e}_{k}}$ on both sides of the isomorphism in
\propref{spopin-duality1} gives
\begin{align}\label{spo2m2n+1:totalchar1}
\prod_{k=1}^{\ell}\prod_{i,j} (z_k^\hf+z_k^{-\hf})
&\frac{(1+\eta_jz_k^{-1})(1+\eta_{j}z_k)}{(1-x_iz_k^{-1})(1-x_iz_k)}=\\
&\sum_{\substack{\la\in\mc{P}({\rm Pin}(2\ell))\\ \la_{m+1}\leq
n}}{\rm ch}L(\hspo(2m|2n+1),\widehat{\La}^{\mf b}_f(\la)){\rm
ch}V^\la_{{\rm Pin}(2\ell)},\nonumber
\end{align}
where
\begin{align*}
\widehat{\La}^{\mf b}_f(\la) := \dfrac{d}{2}\widetilde{\La}^{\mf
b}_0+\sum_{i=1}^m\la_i\varepsilon_i+\sum_{j=1}^n\langle\la'_j-m\rangle\delta_j,
\end{align*}
with $\widetilde{\La}^{\mf b}_0 = \SLa^{\mf
b}_0+\sum_{i=1}^{m}\varepsilon_i-\sum_{j=1}^{n}\delta_j$.

Now the next theorem follows from similar arguments as for
\thmref{superchar:OSP} using \eqref{spo2m2n+1:totalchar1},
\eqref{combid-classical-b}, and \propref{char:schur}.

\begin{thm}\label{superchar:oddSPO}
Put $\eta:=\{\,{\eta}_1,\ldots,{\eta}_m\,\}$ and
$x:=\{\,{x}_1,\ldots,{x}_n\,\}$. For $\la\in\mathcal P ({\rm
Pin}(d))$ with $\la_{m+1}\leq n$, we have
\begin{equation*}
\begin{split}
&{\rm ch}L(\hspo(2m|2n+1),\widehat{\La}^{\mf b}_f(\la))
\\ &=\frac{\sum_{k=0}^\infty (-1)^k \sum_{w\in
W^0_k(\mf{b})}hs_{\la_w}(\eta,x)}{\prod_{1\le i \le j\le
m}\prod_{1\le s <t\le
n}{(1-\eta_i\eta_j)(1-x_sx_t)(1-\eta_j)}{(1+\eta_ix_s)^{-1}(1+x_s)^{-1}}}.
\end{split}
\end{equation*}
\end{thm}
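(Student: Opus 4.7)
The proof follows the template of Theorems \ref{superchar:OSP} and \ref{superchar:SPO}. We seek to match two identities that share the same left-hand side: identity \eqref{spo2m2n+1:totalchar1}, arising from the Howe duality of Proposition \ref{spopin-duality1}; and the identity obtained by substituting the Kostant-type character formula of Proposition \ref{char:schur} for $\text{ch}\,L(\mf b_\infty, \La^{\mf b}(\la))$ into the $(\mf b_\infty, {\rm Pin}(d))$-duality identity \eqref{combid-classical-b}. Extracting $\text{ch}\,L(\hspo(2m|2n+1), \widehat{\La}^{\mf b}_f(\la))$ amounts to transforming the second identity into a form in which the right-hand sides can be compared term by term against the first.

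Following the strategy of Theorem \ref{superchar:OSP}, I would apply the involution $\omega$ on the ring of symmetric functions in $x_1, x_2, \ldots$, then rename and truncate the $x$-variables so that they become the $\eta_1, \ldots, \eta_m$ and $x_1, \ldots, x_n$ of the theorem statement. Under this composite operation each Schur polynomial $s_{\la_w}(x_1, x_2, \ldots)$ becomes the hook Schur polynomial $hs_{\la_w}(\eta; x)$, via the standard identity $\omega_y[s_\mu(x,y)] = hs_\mu(x;y)$; the factor $\prod_n(1+x_n z_k)(1+x_n z_k^{-1})$ on the LHS of \eqref{combid-classical-b} transforms into $\prod_{i,j}(1+\eta_j z_k^{\pm 1})(1-x_i z_k^{\pm 1})^{-1}$, matching the corresponding factor in \eqref{spo2m2n+1:totalchar1}; and the $x$-free factor $(z_k^{1/2}+z_k^{-1/2})$, characteristic of type $\mf b$, passes through unchanged and is already present in \eqref{spo2m2n+1:totalchar1}. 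Comparison with \eqref{spo2m2n+1:totalchar1} together with the linear independence of $\{\text{ch}\,V^\la_{{\rm Pin}(d)} \mid \la \in \mc P({\rm Pin}(d))\}$ then isolates the claimed formula for each $\la$.

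The main obstacle is tracking the denominator $D^{\mf b} = \prod_i(1-x_i)\prod_{i<j}(1-x_ix_j)$ through $\omega$ and the substitutions. Splitting the indices into the $\eta$-block and the $x$-block, the pair-product $\prod_{i<j}(1-x_ix_j)$ yields factors $(1-\eta_i\eta_j)$, $(1-x_s x_t)$, and $(1+\eta_i x_s)^{-1}$, while the single-index product $\prod_i(1-x_i)$---characteristic of type $\mf b$ and absent in types $\mf c, \mf d$---yields the additional $(1-\eta_j)$ and $(1+x_s)^{-1}$ factors appearing in the stated denominator. The only nontrivial check is verifying that exactly these factors emerge from the standard identities $\omega[\prod_i(1-x_i z)^{-1}] = \prod_i(1+x_i z)$ together with the analogous pair identity; this is a direct extension of the bookkeeping already carried out in the proofs of Theorems \ref{superchar:OSP} and \ref{superchar:SPO}.
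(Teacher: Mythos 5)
Your proposal is correct and takes exactly the approach the paper intends: the paper's proof is a one-line deferral to ``similar arguments as for Theorem~\ref{superchar:OSP}'' with the three named ingredients \eqref{spo2m2n+1:totalchar1}, \eqref{combid-classical-b}, and Proposition~\ref{char:schur}. Your fleshing out of the type-$\mf{b}$-specific bookkeeping---the $x$-free factor $z_k^{\hf}+z_k^{-\hf}$ passing through $\omega$ unchanged, and the single-index product $\prod_i(1-x_i)$ in $D^{\mf{b}}$ giving rise to the extra $(1-\eta_j)$ and $(1+x_s)^{-1}$ factors---is precisely what the paper leaves implicit.
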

We denote by $\omega^{\mf b}$ the map which sends ${\rm
ch}L(\mathfrak{b}_{\infty},\La^{\mf b}(\la))$ to ${\rm
ch}L(\hosp(2m|2n+1),\widehat{\La}^{\mf b}_f(\la))$ and extend
$\omega^{\mf b}$ by linearity.

\section{The bilinear forms and Casimir operators for Lie (super)algebras}\label{casimir:op}

\subsection{The bilinear form $(\cdot|\cdot)_c$ and the Casimir operator $\Omega$ of $\G$}\label{casimir}


Recall the symmetric bilinear form $(\cdot\vert\cdot)_c$ defined  on
$\h^*$ in \cite[Section 4.1]{CK}, which induces a  non-degenerate
invariant symmetric bilinear form on $\G'=[\G,\G]$, also denoted by
$(\cdot\vert\cdot)_c$. Using this one defines a Casimir operator
$\Omega$ which commutes with the action of $\G$ on a highest weight
$\G$-module $V$ of highest weight $\la$, and which acts as the
scalar $(\la+2\rho_c\vert\la)_c$ on $V$. The details here are
completely analogous to \secref{super:casimir} below, and we refer
to \cite[Section~4.1]{CK} for more detail.

\subsection{The bilinear form $(\cdot|\cdot)_s$ and the Casimir operator $\overline{\Omega}$ of
$\SG$}\label{super:casimir}


Suppose first that $\SG=\hgl({p+m|q+n})$. Note that
$\{\varepsilon_i,\delta_j\,\}\cup\{\,\widetilde{\La}_0^{\mf a}\}$ is
the basis of $\ov{\h}^*$ dual to the basis
$\{\widehat{E}_{kk}\,\}\cup\{\,K\}$ of $\ov{\h}$. Set
\begin{equation}\label{super:rho}
\rho_s:=\sum_{i=-p}^{-1}(-i-q)\varepsilon_i+\sum_{i=1}^m(1-i)\varepsilon_i+\sum_{j=-q}^{-1}(-j-1)\delta_{j}
+\sum_{j=1}^n(m-j)\delta_j.
\end{equation}

We choose a symmetric bilinear form $(\cdot\vert\cdot)_s$ on
$\ov{\h}^*$ satisfying
\begin{align*}
&(\la\vert \varepsilon_i)_s=
-\langle\la,E_{ii}+\frac{K}{2}\rangle, \quad
i\in\{-p,\ldots,-1,1,\ldots,m\},\\
&(\la\vert \delta_j)_s= \langle\la,E_{\bar{j}
\bar{j}}-\frac{K}{2}\rangle, \quad
j\in\{-q,\ldots,-1,1,\ldots,n\}.
\end{align*}
We check easily that
\begin{align*}
(\varepsilon_i\vert\varepsilon_j)_s=-\delta_{ij},\quad
(\varepsilon_i\vert\delta_j)_s=0,\quad
(\delta_i\vert\delta_j)_s=\delta_{ij}.
\end{align*}
Also we have
 $(\widetilde{\La}^{\mf
a}_0\vert \varepsilon_{-i})_s=(\widetilde{\La}^{\mf a}_0\vert
\delta_{-j})_s=-(\widetilde{\La}^{\mf a}_0\vert
\varepsilon_i)_s=-(\widetilde{\La}^{\mf a}_0\vert \delta_j)_s=\hf,$ for
$i,j>0$. Furthermore, recalling the simple roots $\beta_k$ for
$\SG$, we have
\begin{equation*}
(\rho_s\vert\beta_k)_s=\hf(\beta_k\vert\beta_k)_s, \ \ \ k\in
\ov{I}.
\end{equation*}

Next, suppose that $\SG$ is $\hspo(2m|2n+1)$, $\hosp(2m|2n)$ or
$\hspo(2m|2n)$. Note that $\{\varepsilon_i,
\delta_j\,\}\cup\{\,\widetilde{\La}^{\mathfrak x}_0\}$ is the basis
of $\ov{\h}^*$ dual to $\{\widehat{E}_{i},
\widehat{E}_{\ov{j}}\,\}\cup\{\,K\}$, ${\mathfrak x}\in
\{\mf{b},\mf{c},\mf{d}\}$. We set
\begin{equation}\label{super:rhocd}
\rho_s:=
\begin{cases}
\sum_{i=1}^m (-i+\hf)\varepsilon_i + \sum_{j=1}^n
(m-j+\hf)\delta_j, & \text{for ${\mathfrak x}=\mf{b}$},\\
\sum_{i=1}^m(1-i)\varepsilon_i +\sum_{j=1}^n(m-j)\delta_j, & \text{for ${\mathfrak x}=\mf{c}$}, \\
\sum_{i=1}^m -i\varepsilon_i +\sum_{j=1}^n(m-j+1)\delta_j, &
\text{for ${\mathfrak x}=\mf{d}$}.
\end{cases}
\end{equation}  We choose a symmetric bilinear form
$(\cdot\vert\cdot)_s$ on $\ov{\h}^*$ satisfying for
$\la\in\ov{\h}^*$
\begin{align*}
&(\la\vert \varepsilon_i)_s= \langle\la,E_{i}\rangle, \quad
i\in\{1,\ldots,m\},\\
&(\la\vert \delta_j)_s= -\langle\la,E_{\bar{j}}\rangle, \quad
j\in\{1,\ldots,n\}.
\end{align*}
Similarly, we can check that
$(\varepsilon_i\vert\varepsilon_j)_s=\delta_{ij}$,
$(\varepsilon_i\vert\delta_j)_s=0$,
$(\delta_i\vert\delta_j)_s=-\delta_{ij}$. Also we have
$(\widetilde{\La}^{\mathfrak x}_0\vert
\varepsilon_{i})_s=(\widetilde{\La}^{\mathfrak x}_0\vert
\delta_{j})_s=1$, and $(\rho_s|\beta_k)_s=\hf(\beta_k|\beta_k)_s$
for $k\in\ov{I}$.

For $i\in \ov{I}$ define
\begin{equation*}
\begin{aligned}
\bar{s}_i^{\mf a}&:=
\begin{cases}
-1, & \text{if $i\in\{-p+1,\ldots,-1,-\ov{q},1,\ldots,m\}$}, \\
\ \ 1, & \text{if
$i\in\{-\ov{q-1},\ldots,-\bar{1},0,\bar{1},\ldots,\ov{n-1}\}$}.
\end{cases} \\
\bar{s}_i^{\mathfrak x}&:=
\begin{cases}
 -1, & \text{if $i=0$ and ${\mathfrak x}={\mf b}$}, \\
\ \ 1, & \text{if $i=0$ and ${\mathfrak x}={\mf c}$}, \\
\ \ 2, & \text{if $i=0$ and ${\mathfrak x}={\mf d}$}, \\
\ \ 1, & \text{if } i\in\{1,\ldots,m\} \text{ and } {\mathfrak x}\in \{\mf{b, c,d}\}, \\
- 1, & \text{if } i\in\{\ov{1},\ldots,\ov{n-1}\}  \text{ and }
{\mathfrak x}\in \{\mf{b, c,d}\}.
\end{cases}
\end{aligned}
\end{equation*}

Then we have
\begin{equation*}\label{aux:scasimir2}
(\la\vert\beta_i)_s=\bar{s}^{\mathfrak
x}_i\langle\la,\beta_i^{\vee}\rangle, \quad i\in\ov{I},\
\la\in\ov{\h}^*.
\end{equation*}
By defining $({\beta}_i^{\vee}\vert{\beta}_j^{\vee})_s:=
(\bar{s}_i^{\mathfrak x}\bar{s}_j^{\mathfrak x})^{-1}
(\beta_i\vert\beta_j)_s$, we obtain a symmetric bilinear form on
the Cartan subalgebra of $\SG'=[\SG,\SG]$, which can be extended
to a non-degenerate invariant super-symmetric bilinear form on
$\SG'$ such that
\begin{equation*}
(\ov{e}_i\vert \ov{f}_{j})_s=\delta_{ij}/\bar{s}_i^{\mathfrak x},
\end{equation*}
where $\ov{e}_i$ and $\ov{f}_i$ denote the Chevalley generators of
$\SG'$ with $[\ov{e}_i,\ov{f}_i]=\beta_i^{\vee}$.

Let $\SG_\beta$ be the root space of $\beta\in\ov{\Delta}^\pm$. Take
$u_{\beta}\in \SG_\beta$ and $u^{\beta}\in \SG_{-\beta}$ for $\beta
\in\ov{\Delta}^+$ such that $(u_{\alpha} \vert u^{\beta})_s
=\delta_{\alpha\beta}$. For any highest weight $\SG$-module $V$,
with weight space decomposition $V=\bigoplus_{\mu} V_\mu$, we define
$\overline{\Gamma}_1:V\rightarrow V$ to be the linear map that acts
as the scalar $(\mu+2\rho_s\vert\mu)_s$ on $V_\mu$. Let
$\overline{\Gamma}_2:=2\sum_{\beta\in\overline{\Delta}^+}u^{\beta}u_{\beta}$.
Define the Casimir operator to be
\begin{equation}\label{supercasimir}
\overline{\Omega}:=\overline{\Gamma}_1+\overline{\Gamma}_2.
\end{equation}
It is straightforward to check the following.

\begin{prop}
The operator $\overline{\Omega}$ commutes with the action of $\SG$
on a highest weight module $V$ with highest weight $\la$, and acts
on $V$ as the scalar $(\la+2\rho_s\vert\la)_s$.
\end{prop}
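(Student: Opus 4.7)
The plan is to establish the two assertions separately. First, $\overline{\Omega}$ acts as the scalar $(\la + 2\rho_s|\la)_s$ on a highest weight vector $v_\la \in V$: for every $\beta \in \ov{\Delta}^+$, the positive root vector $u_\beta \in \SG_\beta$ annihilates $v_\la$, hence $\overline{\Gamma}_2 v_\la = 0$, and by the definition of $\overline{\Gamma}_1$ we obtain $\overline{\Omega} v_\la = \overline{\Gamma}_1 v_\la = (\la + 2\rho_s|\la)_s v_\la$. Second, $\overline{\Omega}$ commutes with every element of $\SG$ acting on $V$. Together with $V = U(\SG)\cdot v_\la$, these force $\overline{\Omega}$ to act by the stated scalar on all of $V$.

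For commutativity, it suffices to verify $[\overline{\Omega}, X] = 0$ on $V$ when $X$ ranges over the Cartan subalgebra $\ov{\h}$ and the Chevalley generators $\ov{e}_i, \ov{f}_i$ ($i \in \ov{I}$), together with the central element $K$ (which is immediate). The Cartan case is transparent: $\overline{\Gamma}_1$ acts by scalars on each weight space, while each summand $u^\beta u_\beta$ of $\overline{\Gamma}_2$ has total weight zero. For $\ov{e}_i$, a direct calculation on a weight vector $v_\mu$ yields
\begin{equation*}
[\overline{\Gamma}_1, \ov{e}_i] v_\mu = \bigl( 2(\beta_i|\mu)_s + (\beta_i|\beta_i)_s + 2(\rho_s|\beta_i)_s \bigr) \ov{e}_i v_\mu,
\end{equation*}
which, upon applying the identity $(\rho_s|\beta_i)_s = \hf(\beta_i|\beta_i)_s$ established for the form $(\cdot|\cdot)_s$, simplifies to $\bigl( 2(\beta_i|\mu)_s + 2(\beta_i|\beta_i)_s \bigr)\ov{e}_i v_\mu$. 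One then expands $[\overline{\Gamma}_2, \ov{e}_i]$ using the super-Leibniz rule together with the fact that, by invariance of $(\cdot|\cdot)_s$, the bracket $[u^\beta, u_\beta]$ lies in $\ov{\h}$ and equals the element $t_\beta$ corresponding to $\beta$ under the form. After reordering via the super PBW identities, the Cartan residue produced by $[\overline{\Gamma}_2, \ov{e}_i]$ cancels $[\overline{\Gamma}_1, \ov{e}_i]$ exactly; the case of $\ov{f}_i$ is dual.

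The main obstacle is the sign bookkeeping when expanding $[\overline{\Gamma}_2, \ov{e}_i]$ in the super setting, since each supercommutation introduces a parity factor of the form $(-1)^{|\ov{e}_i||u^\beta|}$, and the root vectors $u_\beta$ for $\beta \in \ov{\Delta}^+$ are of mixed parity. The normalization of the dual basis through the factors $\bar{s}^{\mf x}_i$ introduced above is precisely tuned so that this cancellation is uniform across the four types $\mf{x}\in\{\mf{a, b, c, d}\}$; in each type the argument then reduces to the classical Kac-Moody Casimir calculation augmented by the appropriate parity factors.
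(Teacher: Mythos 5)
The paper offers no proof here---it simply remarks that the claim is ``straightforward to check''---so there is no argument to compare yours against; your sketch is the standard Kac-style Casimir argument (reduce to Chevalley generators, compute $[\overline{\Gamma}_1,\ov{e}_i]$ directly, cancel against the Cartan residue of $[\overline{\Gamma}_2,\ov{e}_i]$ via invariance and $(\rho_s|\beta_i)_s=\hf(\beta_i|\beta_i)_s$), and it is the correct one. One small imprecision in your last paragraph: the constants $\bar{s}^{\mf x}_i$ serve only to normalize the invariant form $(\cdot|\cdot)_s$ so that it matches the form on $\ov{\h}^*$ already defined (they play no special role in making the parity signs cancel); the cancellation of the $(-1)^{|\ov{e}_i||u^\beta|}$ factors is a general feature of the super-Leibniz bookkeeping once $(\cdot|\cdot)_s$ is invariant and super-symmetric, valid uniformly across types $\mf{a,b,c,d}$ regardless of how the normalizing constants are chosen.
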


\subsection{The sets of weights $\mc{P}^+_{{\mf l}}$ and $\mc{P}^{++}_{{\mf l}}$ in ${\h}^*$}
\label{sec:Plc}


Recall the Levi subalgebra ${\mf l}$ of $\G$ with simple roots
indexed by $S$. Let $c\in\C$.

For $\G={\mf a}_{\infty}$, let $\mathcal P^+_{{\mf l},c}$ consist of
$\mu\in{\h}^*$ of the following form:
\begin{equation} \label{wt:ainf}
\mu=c\La^{\mf a}_0 + \sum_{i\geq 1}\eta_i\epsilon_i - \sum_{j\geq
0}\zeta_{j}\epsilon_{-j},
\end{equation}
where $\eta=(\eta_1,\eta_2,\ldots)$ and
$\zeta=(\zeta_0,\zeta_1,\ldots)\in \mc{P}^+$. (Note the shift of
index for $\zeta$.) We denote by $\mc{P}^{++}_{\mf l,c}$ the subset
of $\mc{P}^{+}_{\mf l,c}$ which consists of $\mu$ above with $\eta$
and $\zeta$ being $(n|m)$ and $(q|p)$-hook partitions, respectively.

For $\G=\mf{b}_{\infty},{\mf c}_{\infty},{\mf d}_{\infty}$, let
$\mathcal P^+_{{\mf l},c}$ consist of $\mu\in{\h}^*$ of the
following form:
\begin{equation} \label{wt:xinfty}
\mu=c\La^{\mathfrak x}_0+\sum_{i\geq 1}\mu_i\epsilon_i,
\end{equation}
where $(\mu_1, \mu_2, \ldots)\in\mc{P}^+$. Denote by
$\mc{P}^{++}_{\mf l,c}$ the subset of $\mc{P}^{+}_{\mf l,c}$ of the
form above with $\mu$ being an $(n|m)$-hook partition. We put
\begin{equation*}
\mc{P}^{+}_{\mf l}:=\bigsqcup_{c\in\C}\mc{P}^{+}_{\mf l,c}, \ \ \ \
\mc{P}^{++}_{\mf l}:=\bigsqcup_{c\in\C}\mc{P}^{++}_{\mf l,c}.
\end{equation*}

\subsection{The sets of weights $\mc{P}^{++}_{\ov{\mf l}}$ in $\ov{\h}^*$}


Let $\ov{\Delta}:=\ov{\Delta}^+\cup
\ov{\Delta}^{\,-}$ be the set of roots of $\SG$, where
$\ov{\Delta}^{\,-}=-\ov{\Delta}^+$. Put
\begin{equation*}
S:= \ov{I}\setminus \{0\}.
\end{equation*}
Let $\ov{\Delta}_S^\pm:=\ov{\Delta}^\pm\cap(\sum_{r\in S}\Z\beta_r)$
and $\ov{\Delta}^\pm(S):=\ov{\Delta}^\pm\setminus\ov{\Delta}^\pm_S$.
Let
\begin{equation*}\label{superparabolic}
\begin{aligned}
&\ov{\mf u}_{\pm} := \sum_{\beta\in \ov{\Delta}^\pm(S)}\SG_{\beta},
\quad \ov{\mf l}  := \sum_{\beta\in \ov{\Delta}_S^+ \cup
\ov{\Delta}_S^-}\SG_{\beta}\oplus\ov{\h}
\end{aligned}
\end{equation*}
so that $\SG=\ov{\mf{u}}_+\oplus\ov{\mf{l}}\oplus\ov{\mf{u}}_-$. If
$\SG=\hgl({p+m|q+n})$, then $\ov{\mf{l}}=\gl(p|q) \oplus
\gl(m|n)\oplus\C K$, and $\ov{\mf{l}}= \gl(m|n)\oplus\C K$,
otherwise. The Lie superalgebras $\ov{\mf{l}}$ and $\SG$ share the
same Cartan subalgebra $\ov\h$.

Let $c\in\C$. For $\SG=\hgl({p+m|q+n})$, let $\mathcal
P^{++}_{\ov{\mf l},c}$ consist of $\ov\mu\in\ov{\h}^*$ of the
following form:
\begin{equation*}
\begin{split}
\ov\mu =&\; c\widetilde{\La}^{\mf a}_0 +
\tau_1\varepsilon_{1}+\cdots+\tau_m\varepsilon_{m}+
\langle\tau'_{1}-m\rangle\delta_1+\cdots+\langle\tau'_{n}-m\rangle\delta_n
\\
&-\left(\xi_1\delta_{-1}+\cdots+\xi_{q}\delta_{-q}+
\langle\xi'_{1}-q\rangle\varepsilon_{-1}+\cdots+\langle\xi'_{p}-q\rangle\varepsilon_{-p}\right),
\end{split}
\end{equation*}
where $\tau=(\tau_1,\tau_2,\ldots)$ and $\xi=(\xi_1,\xi_2,\ldots)$
are $(m|n)$- and $(q|p)$-hook partitions, respectively.

When $\SG$ is $\hspo(2m|2n+1)$, $\hosp(2m|2n)$ or $\hspo(2m|2n)$, we
let $\mathcal P^{++}_{\ov{\mf l},c}$ consist of $\ov\mu\in\ov{\h}^*$
of the following form:
\begin{equation*}
\begin{split}
\ov\mu&= c\widetilde{\La}^{\mathfrak x}_0+
\tau_1\varepsilon_{1}+\cdots+\tau_m\varepsilon_{m}+
\langle\tau'_{1}-m\rangle\delta_1+\cdots+\langle\tau'_{n}-m\rangle\delta_n,
\end{split}
\end{equation*}
where $\tau=(\tau_1,\tau_2,\ldots)$ runs over $(m|n)$-hook
partitions. We put
\begin{equation*}
\mc{P}^{++}_{\ov{\mf
l}}:=\bigsqcup_{c\in\C}\mc{P}^{++}_{\ov{\mf l},c}.
\end{equation*}

\subsection{Casimir eigenvalues of $\hgl(p+m|q+n)$ versus those of
$\widehat{\gl}_\infty$}

Let $\SG=\hgl(p+m|q+n)$. Let $ \mu \in \mc{P}^{++}_{\mf l,c}$ be
as in \eqref{wt:ainf}. Set $\tau=\eta'$,
$\nu=(\nu_1,\nu_2,\ldots):=(\tau_{m+1},\tau_{m+2},\ldots)$ and
$\chi=(\chi_1,\chi_2,\ldots):=(\zeta_q,\zeta_{q+1},\ldots)$.
Define a bijection
$$
\vartheta: \mc{P}^{++}_{\mf l,c}\rightarrow \mc{P}^{++}_{\ov{\mf
l},c}
$$
by
\begin{equation*}
\begin{split}
\vartheta(\mu):=& \;  c\widetilde{\La}^{\mf a}_0 +
\tau_1\varepsilon_{1}+\cdots+\tau_m\varepsilon_{m}
+\nu'_{1}\delta_1+\cdots+\nu'_{n}\delta_n
\\
&-\left(\zeta_0\delta_{-1}+\cdots+\zeta_{q-1}\delta_{-q}
+\chi'_{1}\varepsilon_{-1}+\cdots+\chi'_{p}\varepsilon_{-p}\right).
\end{split}
\end{equation*}
Note that
\begin{equation} \label{wt:identify}
\vartheta(\La^{\mf a}(\la))=\widehat{\La}^{\mf a}_f(\la), \quad
\text{ for }  \la\in\mc{P}({\rm GL}(d))\ \text{with
$\la_{m+1}\leq n$ and $\la_{d-p}\ge -q$}.
\end{equation}

For $\eta$ and $\zeta$ as above it is convenient to introduce the
following symbols:
\begin{align*}
&(\eta+2\rho_1|\eta)_1:=\sum_{i\ge 1}\eta_i(\eta_i-2i),\\
&(\zeta+2\rho_2|\zeta)_2:=\sum_{j\ge 0}\zeta_j(\zeta_j-2j).
\end{align*}

By \cite[Lemma 7.1]{CK} or \cite[(1.7)]{Mac}, for
$\la=(\la_1,\la_2,\ldots)\in\mc{P}^+$, we have
\begin{equation} \label{macd}
\sum_{i\geq 1}\la_i(\la_i-2i)=-\sum_{i\geq
1}\la'_i(\la_i'-2(i-1)).
\end{equation}

\begin{lem}\label{aux113}
For $\mu\in\mc{P}^{++}_{\mf l,c}$, we have
\begin{equation} \label{casimir=}
(\vartheta(\mu)+2\rho_s|\vartheta(\mu))_s=(\mu+2\rho_c|\mu)_c+\ov{C},
\end{equation}
where
$$
\ov{C} =c^2(\widetilde{\La}^{\mf a}_0|\widetilde{\La}^{\mf
a}_0)_s+2c(\rho_s|\widetilde{\La}^{\mf a}_0)_s.
$$
In particular, for $\la\in \mc{P}({\rm GL}(d))$ with
$\la_{m+1}\leq n$ and $\la_{d-p}\ge -q$, $(\La^{\mf
a}(\la)+2\rho_c|\La^{\mf a}(\la))_c =(\mu+2\rho_c|\mu)_c$ if and
only if $(\widehat{\La}^{\mf a}_f(\la) +2\rho_s|\widehat{\La}^{\mf
a}_f(\la))_s=(\vartheta(\mu)+2\rho_s|\vartheta(\mu))_s$.
\end{lem}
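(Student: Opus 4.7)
The plan is to prove \eqref{casimir=} by a direct, if somewhat tedious, computation: expand both sides and match the terms using the combinatorial identity \eqref{macd}. The ``in particular'' clause will then follow essentially for free.

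First I would expand $(\mu+2\rho_c|\mu)_c$ on the RHS. Using the bilinear form $(\epsilon_i|\epsilon_j)_c=\pm\delta_{ij}$ on $\mf{a}_\infty$ and the standard formula for $\rho_c$, this gives an expression of the shape
\[
(\mu+2\rho_c|\mu)_c \;=\; c^2(\La^{\mf a}_0|\La^{\mf a}_0)_c+2c(\rho_c|\La^{\mf a}_0)_c + \sum_{i\ge 1}\eta_i(\eta_i-2i)+\sum_{j\ge 0}\zeta_j(\zeta_j-2j),
\]
up to sign conventions on $\epsilon_{-j}$; this is the content of \cite[Section~4]{CK} specialized to the weight \eqref{wt:ainf}.

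Next I would expand $(\vartheta(\mu)+2\rho_s|\vartheta(\mu))_s$. Using $(\varepsilon_i|\varepsilon_j)_s=-\delta_{ij}$, $(\delta_i|\delta_j)_s=\delta_{ij}$, $(\varepsilon_i|\delta_j)_s=0$, together with $\rho_s$ from \eqref{super:rho} and the explicit form of $\vartheta(\mu)$, the result splits as
\[
(\vartheta(\mu)+2\rho_s|\vartheta(\mu))_s \;=\; c^2(\widetilde{\La}^{\mf a}_0|\widetilde{\La}^{\mf a}_0)_s + 2c(\rho_s|\widetilde{\La}^{\mf a}_0)_s + \Sigma^+ + \Sigma^-,
\]
where $\Sigma^+$ collects the $\varepsilon$- and $\delta$-contributions coming from $\tau_1,\ldots,\tau_m$ and $\nu'_1,\ldots,\nu'_n$, and $\Sigma^-$ collects the analogous contributions from $\zeta_0,\ldots,\zeta_{q-1}$ and $\chi'_1,\ldots,\chi'_p$. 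The constant $\ov{C}$ in the lemma is exactly the first two terms here, so what remains is to show $\Sigma^+ = \sum_{i\ge1}\eta_i(\eta_i-2i)$ and $\Sigma^- = \sum_{j\ge 0}\zeta_j(\zeta_j-2j)$.

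The heart of the argument is this last step, and is where \eqref{macd} enters. By definition $\tau=\eta'$ and $\nu=(\tau_{m+1},\tau_{m+2},\ldots)$, so the sequence $(\tau_1,\ldots,\tau_m,\nu'_1,\ldots,\nu'_n)$ is a hook-repackaging of the $(n|m)$-hook partition $\eta$: the first $m$ entries are the $m$ longest columns of $\eta'$, and $\nu'_j=\eta'_{j}-\#\{i\le m:\eta_i\ge j\}$ for $1\le j\le n$. I would compute $\Sigma^+$ in two pieces, one for the $\tau_i^2-2i\tau_i$ part and one for the $\nu'_j{}^2-2(j+m)\nu'_j$ part (the shift by $m$ coming from $\rho_s$), and apply \eqref{macd} to the ``tail'' partition $\nu$ to convert $\sum_{j\ge 1}\nu'_j(\nu'_j-2j)$ into $-\sum_{i\ge 1}\nu_i(\nu_i-2(i-1))=-\sum_{i>m}\eta'_i(\eta'_i-2(i-m-1))$; combined with the $\tau_i$-piece, \eqref{macd} applied once more to all of $\eta'$ should collapse the sum to $\sum_{i\ge 1}\eta_i(\eta_i-2i)$. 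The treatment of $\Sigma^-$ is entirely parallel, using the $(q|p)$-hook structure of $\zeta$. The main obstacle is bookkeeping: tracking signs (the $\varepsilon_i$'s contribute $-\delta_{ij}$ while $\delta_j$'s contribute $+\delta_{ij}$), the index shifts coming from the $-p,-q$ parts of $\rho_s$ in \eqref{super:rho}, and making sure that \eqref{macd} is applied to the correct subpartitions so the cross-terms cancel.

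Finally, for the ``in particular'' assertion: by \eqref{wt:identify} we have $\vartheta(\La^{\mf a}(\la))=\widehat{\La}^{\mf a}_f(\la)$, and $\La^{\mf a}(\la)$ has the same level $c=d$ as $\mu$, so \eqref{casimir=} applied to both $\mu$ and $\La^{\mf a}(\la)$ yields the same additive constant $\ov{C}$. Subtracting gives
\[
(\widehat{\La}^{\mf a}_f(\la)+2\rho_s|\widehat{\La}^{\mf a}_f(\la))_s-(\vartheta(\mu)+2\rho_s|\vartheta(\mu))_s \;=\; (\La^{\mf a}(\la)+2\rho_c|\La^{\mf a}(\la))_c-(\mu+2\rho_c|\mu)_c,
\]
from which the equivalence of the two Casimir conditions is immediate.
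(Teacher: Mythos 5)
Your overall plan — expand both sides, isolate the constant $\ov{C}$, and apply \eqref{macd} twice to match the remaining sums — is the same route the paper takes, and the manipulations you describe for $\Sigma^+$ (two applications of \eqref{macd}, one to the tail $\nu$ and one to all of $\eta'$) are exactly the paper's steps. However, there is a gap in your opening expansions. On the classical side you write
\[
(\mu+2\rho_c|\mu)_c = c^2(\La^{\mf a}_0|\La^{\mf a}_0)_c+2c(\rho_c|\La^{\mf a}_0)_c + \textstyle\sum_{i\ge 1}\eta_i(\eta_i-2i)+\sum_{j\ge 0}\zeta_j(\zeta_j-2j),
\]
but this omits the genuinely nonzero cross term $2c\bigl(\La^{\mf a}_0 \,\big|\, \sum_i\eta_i\epsilon_i-\sum_j\zeta_j\epsilon_{-j}\bigr)_c = -c(|\eta|+|\zeta|)$; meanwhile the two terms you do write down vanish in the normalization of \cite{CK}. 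The same omission occurs on the super side: in the expansion of $(\vartheta(\mu)+2\rho_s|\vartheta(\mu))_s$ you keep only $\ov{C}+\Sigma^++\Sigma^-$ and drop the cross term $2c(\widetilde{\La}^{\mf a}_0\,|\,\vartheta(\eta)-\vartheta(\zeta))_s$, which, using $(\widetilde{\La}^{\mf a}_0|\varepsilon_i)_s=(\widetilde{\La}^{\mf a}_0|\delta_j)_s=-\tfrac12$ for $i,j>0$ and $=\tfrac12$ for $i,j<0$, is again $-c(|\eta|+|\zeta|)$.

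These two omitted quantities happen to be equal, so the final identity \eqref{casimir=} does hold — but verifying this matching is a real step that your argument never makes explicit, and indeed it is sensitive to the particular shift defining $\widetilde{\La}^{\mf a}_0$. As written, your decomposition is not an equality: the target $\Sigma^+=\sum_i\eta_i(\eta_i-2i)$ is correct for the purely pairing part of $(\vartheta(\eta)+2\rho_s|\vartheta(\eta))_s$, but without accounting for the $-c(|\eta|+|\zeta|)$ on each side the proposed chain of equalities is off by $-c(|\eta|+|\zeta|)$ on both sides simultaneously. You should insert the cross terms into both expansions, observe they agree, and only then reduce to the claim about $\Sigma^\pm$; this is exactly what the paper's proof does (its line ``$-c(|\eta|+|\zeta|)$'' appears explicitly in both the classical and super expansions).
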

\begin{proof}{
Let $ \mu \in \mc{P}^{++}_{\mf l,c}$ be as in \eqref{wt:ainf}.
Using \eqref{macd}, we have
\begin{equation*}
\begin{split}
(\mu+2\rho_c|\mu)_c&=(\eta+2\rho_1|\eta)_1+(\zeta+2\rho_2|\zeta)_2-c(|\eta|+|\zeta|)  \\
&=-(\tau+2\rho_2|\tau)_2+(\zeta+2\rho_2|\zeta)_2-c(|\eta|+|\zeta|).
\end{split}
\end{equation*}

For convenience, put $\eta=\sum_{i\geq 1}\eta_i\epsilon_i$ and
$\zeta=\sum_{j\geq 0}\zeta_j\epsilon_{-j}$. Now, we have
\begin{equation*}
\begin{split}
&(\vartheta(\mu)+2\rho_s|\vartheta(\mu))_s =(c\widetilde{\La}^{\mf
a}_0 +\vartheta(\eta) -\vartheta(\zeta)+2\rho_s
|c\widetilde{\La}^{\mf a}_0 +\vartheta(\eta)-\vartheta(\zeta))_s
\\
&=c^2(\widetilde{\La}^{\mf a}_0|\widetilde{\La}^{\mf a}_0)_s
+2c(\rho_s|\widetilde{\La}^{\mf a}_0)_s
+(\vartheta(\eta)-\vartheta(\zeta)+2\rho_s
|\vartheta(\eta)-\vartheta(\zeta))_s
+2c(\widetilde{\La}^{\mf a}_0|\vartheta(\eta)-\vartheta(\zeta))_s  \\
&=\ov{C}+(\vartheta(\eta)+2\rho_s|\vartheta(\eta))_s
+(\vartheta(\zeta)-2\rho_s|\vartheta(\zeta))_s -c(|\eta|+|\zeta|).
\end{split}
\end{equation*}

First, we have
\begin{align}\label{vartheta1}
(\vartheta(\eta)&
+2\rho_s|\vartheta(\eta))_s\allowdisplaybreaks\nonumber
\\
=&-\left[ \tau_1(\tau_1-2\cdot 0)+\cdots+\tau_m(\tau_m-2(m-1))
\right]\nonumber\allowdisplaybreaks  \\
&+ \left[ \nu'_{1}(\nu'_{1}+2(m-1))+\cdots+\nu'_{n}(\nu'_{n}+2(m-n))
\right]\nonumber\allowdisplaybreaks  \\
=&-\left[ \tau_1(\tau_1-2\cdot 0)+\cdots+\tau_m(\tau_m-2(m-1))
\right]\nonumber\allowdisplaybreaks  \\
&+ \left[ \nu'_{1}(\nu'_{1}-2\cdot1)+\cdots+\nu'_{n}(\nu'_{n}-2n)
\right] +2m \sum_{k=1}^n \nu'_{k}\nonumber\allowdisplaybreaks   \\
=&-\left[ \tau_1(\tau_1-2\cdot 0)+\cdots+\tau_m(\tau_m-2(m-1))
\right] \allowdisplaybreaks \\
&- \left[ \nu_{1}(\nu_{1}-2\cdot 0)+\nu_{2}(\nu_{2}-2\cdot
1)+\cdots)\right] +2m \sum_{k\geq 1}
\nu_{k}\nonumber\allowdisplaybreaks\\
=&-\left[ \tau_1(\tau_1-2\cdot 0)+\cdots+\tau_m(\tau_m-2(m-1))
\right]\nonumber\allowdisplaybreaks \\
&- \left[ \tau_{m+1}(\tau_{m+1}-2\cdot
0)+\tau_{m+2}(\tau_{m+2}-2\cdot 1)+\cdots\right] +2m \sum_{k\geq 1}
\tau_{m+k}\nonumber\allowdisplaybreaks \\
=&-(\tau+2\rho_2|\tau)_2\nonumber\allowdisplaybreaks.
\end{align}

Similarly, we have
\begin{align*}
 (\vartheta(\zeta)& -2\rho_s|\vartheta(\zeta))_s \allowdisplaybreaks
\\
=&\left[ \zeta_0(\zeta_0-2\cdot
0)+\cdots+\zeta_{q-1}(\zeta_{q-1}-2(q-1))
\right]\allowdisplaybreaks  \\
&- \left[
\chi'_{1}(\chi'_{1}+2(q-1))+\cdots+\chi'_{p}(\chi'_{p}+2(q-p))
\right]\allowdisplaybreaks \\
=&\left[ \zeta_0(\zeta_0-2\cdot
0)+\cdots+\zeta_{q-1}(\zeta_{q-1}-2(q-1))
\right]\allowdisplaybreaks  \\
&- \left[ \chi'_{1}(\chi'_{1}-2\cdot
1))+\cdots+\chi'_{p}(\chi'_{p}-2\cdot p)
\right] -2q \sum_{k=1}^{p}\chi'_{k}\allowdisplaybreaks  \\
=&\left[ \zeta_0(\zeta_0-2\cdot
0)+\cdots+\zeta_{q-1}(\zeta_{q-1}-2(q-1))
\right]\allowdisplaybreaks  \\
&+ \left[ \chi_{1}(\chi_{1}-2\cdot 0))+\chi_{2}(\chi_{2}-2\cdot
1)+\cdots
\right] -2q \sum_{k\geq 1}\chi_{k}\allowdisplaybreaks  \\
=&\left[ \zeta_0(\zeta_0-2\cdot
0)+\cdots+\zeta_{q-1}(\zeta_{q-1}-2(q-1))
\right]\allowdisplaybreaks  \\
&+ \left[ \zeta_{q}(\zeta_{q}-2\cdot
0))+\zeta_{q+1}(\zeta_{q+1}-2\cdot 1)+\cdots
\right] -2q \sum_{k\geq 0}\zeta_{q+k}\allowdisplaybreaks  \\
=& (\zeta+2\rho_2|\zeta)_2.
\end{align*}
This completes the proof of \eqref{casimir=}. The remaining part
of the lemma follows from the definition of the bijection
$\vartheta$, \eqref{wt:identify}, and \eqref{casimir=}.}
\end{proof}

\subsection{Casimir eigenvalues of $\hosp$  and $\hspo$ versus those of $\mf{x}_\infty$}

Suppose that $(\SG, G)$ is $( \hspo(2m|2n+1), {\rm Pin}(d))$,
$(\hosp(2m|2n), {\rm Sp}(d) )$ or $( \hspo(2m|2n), {\rm O}(d))$. Let
$\mu \in \mc{P}^{++}_{\mf l,c}$ be as in \eqref{wt:xinfty} with
${\mathfrak x}\in\{\mf{b},{\mf c},{\mf d}\}$ and $c\in\C$. Then
$\nu:=(\nu_1,\nu_2,\ldots)=(\mu_1,\mu_2,\ldots)'$ is of $(m|n)$-hook
shape. Set
$\tau:=(\tau_1,\tau_2,\ldots):=(\nu_{m+1},\nu_{m+2},\ldots)$. We
define a bijection $\vartheta:\mc{P}^{++}_{\mf l,c}\rightarrow
\mc{P}^{++}_{\ov{\mf l},\ov{c}}$ by letting
\begin{equation*}
\vartheta(\mu):=\ov{c}\widetilde{\La}^{\mathfrak x}_0+
\nu_1\varepsilon_1 +\cdots+\nu_m\varepsilon_m + \tau'_{1}\delta_1
+\cdots +\tau'_{n}\delta_n,
\end{equation*}
where $\ov{c}={c}{\langle\La^{\mathfrak x}_0,K \rangle}$. Note
that
$$
\vartheta(\La^{\mathfrak x}(\la))=\widehat{\La}^{\mathfrak
x}_f(\la), \quad \text{ for } \la\in\mc{P}^{\SG}(G).
$$

\begin{lem}\label{aux113-cd}
For $\mu\in\mc{P}^{++}_{\mf l,c}$, we have
$$(\vartheta(\mu)+2\rho_s|\vartheta(\mu))_s=-(\mu+2\rho_c|\mu)_c+ \ov C,$$
where $\ov C =\ov{c}^2(\widetilde{\La}^{\mathfrak
x}_0|\widetilde{\La}^{\mathfrak
x}_0)_s+2\ov{c}(\rho|\widetilde{\La}^{\mathfrak x}_0)_s$. In
particular, $(\La^{\mathfrak x}(\la)+2\rho_c |\La^{\mathfrak
x}(\la))_c =(\mu+2\rho_c|\mu)_c$ if and only if
$(\widehat{\La}^{\mathfrak
x}_f(\la)+2\rho_s|\widehat{\La}^{\mathfrak
x}_f(\la))_s=(\vartheta(\mu)+2\rho_s|\vartheta(\mu))_s$, for $\la\in
\mc{P}^{\SG}(G)$.
\end{lem}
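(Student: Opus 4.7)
The plan is to mirror the proof of \lemref{aux113}, with careful attention to the sign conventions governing the super bilinear form $(\cdot\vert\cdot)_s$ in the ortho-symplectic setting (\secref{super:casimir}) and those of $(\cdot\vert\cdot)_c$ on the $\G$-side \cite[Section~4.1]{CK}. The overall minus sign in front of $(\mu+2\rho_c\vert\mu)_c$ is the chief new feature, dictated by the signature flip: on $\SG$ one has $(\varepsilon_i\vert\varepsilon_j)_s=\delta_{ij}$ and $(\delta_i\vert\delta_j)_s=-\delta_{ij}$, whereas the pairing of $\epsilon_i$ with itself on the $\G$-side has the opposite sign.

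First, I would decompose $\vartheta(\mu)=\ov{c}\,\widetilde{\La}^{\mathfrak x}_0+\vartheta(\mu)^{\circ}$, where $\vartheta(\mu)^{\circ}:=\sum_{i=1}^m \nu_i\varepsilon_i+\sum_{j=1}^n \tau'_j\delta_j$, and expand $(\vartheta(\mu)+2\rho_s\vert\vartheta(\mu))_s$ by bilinearity. The purely $\widetilde{\La}^{\mathfrak x}_0$-contribution together with the $\widetilde{\La}^{\mathfrak x}_0$-cross term with $2\rho_s$ recovers the constant $\ov C$, while the remaining cross term $2\ov{c}(\widetilde{\La}^{\mathfrak x}_0\vert\vartheta(\mu)^{\circ})_s$ simplifies by the dual-basis relation $(\widetilde{\La}^{\mathfrak x}_0\vert\varepsilon_i)_s=(\widetilde{\La}^{\mathfrak x}_0\vert\delta_j)_s=1$ recorded in \secref{super:casimir}. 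What remains is the ``classical'' piece $(\vartheta(\mu)^{\circ}+2\rho_s\vert\vartheta(\mu)^{\circ})_s$, which using the explicit $\rho_s$ from \eqref{super:rhocd} expands into a finite sum of the shape $\sum_{i=1}^m \nu_i\bigl(\nu_i+a^{\mathfrak x}_i\bigr)-\sum_{j=1}^n \tau'_j\bigl(\tau'_j+b^{\mathfrak x}_j\bigr)$ for explicit constants $a^{\mathfrak x}_i,b^{\mathfrak x}_j$ depending on the type $\mathfrak x\in\{\mathfrak b,\mathfrak c,\mathfrak d\}$.

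The decisive algebraic step is the partition-conjugation identity \eqref{macd}, applied to the $\tau'$-sum in order to rewrite it in terms of $\tau_k=\nu_{m+k}$. After this substitution the two blocks assemble into a single series $\sum_{k\ge 1}\nu_k(\nu_k-2(k-1))$, plus lower order corrections that match, term by term, the $\G$-side expression computed from \cite[Section~4.1]{CK} \emph{with the opposite sign}. Reabsorbing the cross-term contribution into $\ov C$ yields the desired equality. The ``in particular'' part is then immediate: by construction of the bijection $\vartheta$ we have $\vartheta(\La^{\mathfrak x}(\la))=\widehat{\La}^{\mathfrak x}_f(\la)$, so the two Casimir-equality statements are equivalent modulo the common constant $\ov C$.

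The principal obstacle is the case-by-case bookkeeping for $\mathfrak x=\mathfrak b,\mathfrak c,\mathfrak d$: both $\rho_s$ in \eqref{super:rhocd} and the $\G$-side $\rho_c$ involve type-dependent integer or half-integer shifts, producing slightly different low-order terms in each case. However, in every case these discrepancies are precisely the linear-in-$\ov c$ and quadratic-in-$\ov c$ contributions that package into $\ov C$, so no phenomenon beyond \lemref{aux113} arises and the three cases can be treated in parallel.
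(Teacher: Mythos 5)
Your proposal takes essentially the same route as the paper: decompose $\vartheta(\mu)$ into the $\widetilde{\La}^{\mathfrak x}_0$-part and the ``classical'' part, expand by bilinearity, absorb the $\widetilde{\La}^{\mathfrak x}_0$-contributions into $\ov C$ using the dual-basis pairings, then apply the conjugation identity \eqref{macd} and check the three types $\mathfrak b,\mathfrak c,\mathfrak d$ in parallel, each time matching against the $\G$-side computed analogously. This is exactly what the paper does. One small correction to your motivational remark, though: the overall minus sign is not because $(\epsilon_i|\epsilon_j)_c$ on the $\G$-side has ``the opposite sign'' — in fact $(\epsilon_i|\epsilon_j)_c=\delta_{ij}$ for $\mf{b,c,d}_\infty$, the \emph{same} sign as $(\varepsilon_i|\varepsilon_j)_s$ for the ortho-symplectic $\SG$. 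The sign flip comes entirely from the partition conjugation built into $\vartheta$ via \eqref{macd}; it is visible in the contrast with \lemref{aux113}, where for $\hgl(p+m|q+n)$ the convention $(\varepsilon_i|\varepsilon_j)_s=-\delta_{ij}$ cancels the sign introduced by \eqref{macd}, whereas here the positive $(\varepsilon_i|\varepsilon_j)_s$ lets that sign survive. This does not affect the validity of the remainder of your argument, which, once carried out explicitly, produces the correct sign regardless.
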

\begin{proof}
The equivalence of the identities follows easily once we establish
the first identity. Let $\mu=c\La^{\mathfrak x}_0+\sum_{i\geq
1}\mu_i\epsilon_i \in \mc{P}^{++}_{\mf l,c}$, and so
$\mu^{\circ}:=(\mu_1,\mu_2, \ldots)$ is of $(n|m)$-hook shape.

{\it Case 1}.  $\SG=\hspo (2m|2n+1)$. Let $(\cdot\vert\cdot)_c$
stand for the bilinear form on the dual Cartan subalgebra of
$\mf{b}_\infty$. We compute
\begin{align*}
(\mu+2\rho_c|\mu)_c&=(\sum_{i\ge
1}\mu_i\epsilon_i+2\rho_c|\sum_{i\ge 1}\mu_i\epsilon_i)_c
+ 2c(\sum_{i\ge 1}\mu_i\epsilon_i|\La_0^{\mf b})_c\\
&=\sum_{i\ge
1}\mu_i(\mu_i-2i+1)-c|\mu^\circ|=(\mu^\circ+2\rho_1|\mu^\circ)_1-(c-1)|\mu^\circ|.
\end{align*}
On the other hand we have
\begin{align*}
(\vartheta(\mu)+2\rho_s&|\vartheta(\mu))_s= \ov{C} +
c(\sum_{i=1}^m\nu_i\varepsilon_i+\sum_{j=1}^n\tau'_j\delta_j|\widetilde{\La}^{\mf
b}_0)_s\allowdisplaybreaks \\
&+(\sum_{i=1}^m\nu_i\varepsilon_i+\sum_{j=1}^n\tau'_j\delta_j+2\rho_s|
\sum_{i=1}^m\nu_i\varepsilon_i+\sum_{j=1}^n\tau'_j\delta_j)_s\allowdisplaybreaks \\
&=\ov{C} + c|\mu^\circ|\allowdisplaybreaks \\
&+(\sum_{i=1}^m(\nu_i-2i+1)\varepsilon_i
+\sum_{j=1}^n(\tau'_j+2m-2i+1)\delta_j|\sum_{i=1}^m\nu_i\varepsilon_i+\sum_{j=1}^n\tau'_j\delta_j)_s\allowdisplaybreaks \\
&=\ov{C}+c|\mu^\circ| + \sum_{i=1}^m\nu_i(\nu_i-2i+1) -
\sum_{j=1}^n\tau'_j(\tau'_j+2m-2j+1)\allowdisplaybreaks \\
&=\ov{C}+(c-1)|\mu^\circ| + \sum_{i=1}^m\nu_i(\nu_i-2(i-1)) +
\sum_{j=1}^n \tau_j(\tau_j-2(m+j-1))\allowdisplaybreaks \\
&=\ov{C}+(c-1)|\mu^\circ| + \sum_{i=1}^m\nu_i(\nu_i-2(i-1)) +
\sum_{j=1}^n \nu_{m+j}(\nu_{m+j}-2(m+j-1))\allowdisplaybreaks \\
&=\ov{C}+(c-1)|\mu^\circ| - \sum_{i\ge
1}\mu_i(\mu_i-2i)|=\ov{C}+(c-1)|\mu^\circ| -
(\mu^\circ+2\rho_1|\mu^\circ)_1.
\end{align*}
Above we have used \eqnref{macd} in the third to last identity.

{\it Case 2}. $\SG=\hosp(2m|2n)$. By \eqref{macd}, we have
\begin{equation*}
(\mu+2\rho_c\vert\mu)_c =\sum_{i\geq 1}\mu_i(\mu_i-2i)
-2c|\mu^\circ|=-(\nu+2\rho_2|\nu)_2-2c|\mu^\circ|,
\end{equation*}
where $(\cdot|\cdot)_c$ is the bilinear form on the dual Cartan
subalgebra of $\mf{c}_{\infty}$.

On the other hand,
\begin{align*}
&(\vartheta(\mu)+2\rho_s|\vartheta(\mu))_s\allowdisplaybreaks\\
&=c^2(\widetilde{\La}^{\mf c}_0|\widetilde{\La}^{\mf
c}_0)_s+2c(\rho_s|\widetilde{\La}^{\mf
c}_0)_s+(\vartheta(\mu)+2\rho_s|\vartheta(\mu))_s
+2c(\widetilde{\La}^{\mf c}_0|\vartheta(\mu))_s\allowdisplaybreaks\\
&=\ov C +(\vartheta(\mu)+2\rho_s|\vartheta(\mu))_s +2c|\mu^\circ|.
\end{align*}
And by the same argument as in \eqref{vartheta1}, we can show that
\begin{equation*}
(\vartheta(\mu)+2\rho_s|\vartheta(\mu))_s=(\nu+2\rho_2|\nu)_2.
\end{equation*}

{\it Case 3}. $\SG=\hspo (2m|2n)$. By \eqref{macd}, we have
\begin{equation*}
(\mu+2\rho_c\vert\mu)_c =\sum_{i\geq 1}\mu_i(\mu_i-2(i-1))
-c|\mu^\circ|=-(\nu+2\rho_1|\nu)_1-c|\mu^\circ|,
\end{equation*}
where $(\cdot|\cdot)_c$ is the bilinear form on the dual Cartan
subalgebra of $\mf{d}_{\infty}$.

On the other hand,
\begin{align*}
&(\vartheta(\mu)+2\rho_s|\vartheta(\mu))_s\allowdisplaybreaks\\
&=(c/2)^2(\widetilde{\La}^{\mf d}_0|\widetilde{\La}^{\mf
d}_0)_s+c(\rho_s|\widetilde{\La}^{\mf
d}_0)_s+(\vartheta(\mu)+2\rho_s|\vartheta(\mu))_s
+c(\widetilde{\La}^{\mf d}_0|\vartheta(\mu))_s\allowdisplaybreaks\\
&= \ov C +(\vartheta(\mu)+2\rho_s|\vartheta(\mu))_s +c|\mu^\circ|.
\end{align*}
Similarly, arguing as  in \eqref{vartheta1}, we have
\begin{equation*}
(\vartheta(\mu)+2\rho_s|\vartheta(\mu))_s=(\nu+2\rho_1|\nu)_1.
\end{equation*}\vskip 2mm
This completes the proof of the first identity in the lemma.
\end{proof}

\section{Kostant homology formulas for Lie superalgebras}\label{sec:homology}

\subsection{The ${\U}_-$-homology groups of $\G$-modules}

Recall the dual pair $(\G,G)$ of type ${\mathfrak
x}\in\{\mf{a,b,c,d}\}$ in \secref{classical:dualpairs}, the Levi
subalgebra $\mf{l}$ of $\G$, and $\mc{P}^+_{\mf l}$, the set of
dominant weights for $\mf{l}$. We denote by $L({\mf l},\mu)$ the
irreducible highest weight $\mf{l}$-module with highest weight
$\mu\in\mf{h}^*$.
The following results for integrable modules of Kac-Moody
algebras apply to our setting.

\begin{prop}\label{eigenvalue:classical}
{\rm (}cf.~\cite{GL}, \cite[Prop.~3.1]{J}, \cite[Prop.~18 and
Lemma~20]{L}{\rm )} Let $\la\in\mc{P}(G)$.
\begin{itemize}
\item[(1)] If a weight $\eta\in\mathcal P^+_{\mathfrak l}$ with
$(\eta+2\rho_c\vert\eta)_c=(\La^{\mathfrak
x}(\la)+2\rho_c\vert\La^{\mathfrak x}(\la))_c$ appears in
$\Lambda^k\mathfrak u_-\otimes L(\G,\La^{\mathfrak x}(\la))$, then
$\eta=w\circ\La^{\mathfrak x}(\la)$ for some $w\in W^0_k(\mathfrak
x)$ and $\eta$ appears with multiplicity one.

\item[(2)] The $\mathfrak l$-module ${\rm H}_k({\mathfrak
u}_-;L(\G,\La^{\mathfrak x}(\la)))$ is completely reducible.
Moreover, if $L({\mathfrak l},\eta)$ is a summand of ${\rm
H}_k({\mathfrak u}_-;L(\G,\La^{\mathfrak x}(\la)))$, then
$(\eta+2\rho_c\vert\eta)_c=(\La^{\mathfrak
x}(\la)+2\rho_c\vert\La^{\mathfrak x}(\la))_c$.
\end{itemize}
\end{prop}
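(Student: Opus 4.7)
The plan is to deduce both parts from the classical Kostant-type formula for Kostant homology of integrable modules over Kac-Moody algebras, established in this generality by Garland-Lepowsky \cite{GL}, combined with a direct Casimir eigenvalue analysis on the Chevalley-Eilenberg complex $\Lambda^\bullet\U_-\otimes L(\G,\La^{\mf x}(\la))$.

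For (2), I would invoke the Garland-Lepowsky theorem, which gives
\begin{equation*}
{\rm H}_k(\U_-;\, L(\G,\La^{\mf x}(\la))) \;\cong\; \bigoplus_{w\in W^0_k(\mf x)} L(\mf l,\, w\circ\La^{\mf x}(\la))
\end{equation*}
as $\mf l$-modules, valid because $\La^{\mf x}(\la)$ is integrable dominant for $\G$. This identification yields complete reducibility at once. For the Casimir eigenvalue claim, the $W$-invariance of $(\cdot\vert\cdot)_c$ together with the definition $w\circ\mu=w(\mu+\rho_c)-\rho_c$ gives
\begin{equation*}
(w\circ\La + \rho_c \vert w\circ\La + \rho_c)_c = (w(\La+\rho_c)\vert w(\La+\rho_c))_c = (\La+\rho_c\vert\La+\rho_c)_c,
\end{equation*}
and expanding yields $(\eta+2\rho_c\vert\eta)_c = (\La+2\rho_c\vert\La)_c$ for each summand $\eta=w\circ\La$.

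For (1), any weight $\eta$ of $\Lambda^k\U_-\otimes L(\G,\La)$ has the form $\eta=\La-\alpha-\gamma$, where $-\alpha$ is a sum of $k$ distinct roots of $\U_-$ (coming from the $\Lambda^k\U_-$ factor) and $\gamma$ is a non-negative integer combination of positive roots (coming from weights of $L(\G,\La)$ below its highest weight). The Casimir hypothesis translates into $(\eta+\rho_c\vert\eta+\rho_c)_c=(\La+\rho_c\vert\La+\rho_c)_c$. Under the standing assumption $\eta\in\mc P^+_{\mf l}$, the vector $\eta+\rho_c$ lies in the $\mf l$-dominant chamber. Since $\La$ is integrable dominant, a standard Kac-Moody convexity-dominance argument (cf.~\cite[Prop.~11.4]{K} or the proof in \cite{GL}) shows that the only weights $\eta\le\La$ satisfying the norm identity above are those in the $W$-orbit $W\circ\La$. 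The $\mf l$-dominance further forces the relevant Weyl element to be a minimal-length representative of its right $W_0$-coset, so $w\in W^0$, and matching the number $k$ of $\U_-$-root factors with the length of $w$ yields $w\in W^0_k(\mf x)$. Multiplicity one follows because once $w$ is fixed, the only way to realize $\eta=w\circ\La$ in $\Lambda^k\U_-\otimes L(\G,\La)$ is via a single pairing: the unique wedge of root vectors corresponding to $\Delta^+(S)\cap w^{-1}\Delta^-$ tensored with the highest weight vector of $L(\G,\La)$.

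The main obstacle is executing the convexity-dominance step in the infinite-rank Kac-Moody setting, where one must carefully verify that the argument pinning weights of equal $(\cdot\vert\cdot)_c$-square-length in $\La-Q^+$ to the $W$-orbit of $\La$ carries over. This step is already handled in \cite{GL} and in the closely related \cite[Prop.~18 and Lemma~20]{L}, whose arguments I would follow verbatim.
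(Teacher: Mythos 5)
The paper offers no proof here; the proposition is taken as a known fact for integrable Kac--Moody modules with pointers to [GL], [J, Prop.~3.1], and [L, Prop.~18, Lemma~20], which is exactly the route you propose, so your overall structure is sound. Two points in the details need fixing, though. In the multiplicity-one step of part~(1), the unique realization of $\eta=w\circ\La^{\mathfrak x}(\la)$ in $\Lambda^k\U_-\otimes L(\G,\La^{\mathfrak x}(\la))$ does \emph{not} use the highest-weight vector of $L$: since $w\circ\La^{\mathfrak x}(\la)=w(\La^{\mathfrak x}(\la))-\sum_{\alpha\in\Phi_w}\alpha$ with $\Phi_w:=\Delta^+\cap w\Delta^-\subset\Delta^+(S)$ of size $k$ for $w\in W^0_k(\mathfrak x)$, the pairing is the wedge of root vectors for $-\Phi_w$ tensored with a weight vector of the \emph{extremal} weight $w(\La^{\mathfrak x}(\la))$, whose weight space is one-dimensional; only $w=e$ reduces this to the highest-weight vector, so your last sentence is incorrect as stated.

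Second, invoking the full Garland--Lepowsky homology formula to deduce part~(2) is logically backwards: parts (1) and (2) together with the Euler--Poincar\'e principle are precisely the inputs used to \emph{prove} that formula in the cited sources. The direct argument, as in [L, Prop.~18], is that $\Lambda^k\U_-\otimes L$ is completely reducible over $\mathfrak l$ (because $L$ restricted to $\mathfrak l$ is integrable), hence so is the subquotient ${\rm H}_k(\U_-;L)$; and the Casimir operator $\Omega$ of $\G$ commutes with the Chevalley--Eilenberg differential and acts by the single scalar $(\La^{\mathfrak x}(\la)+2\rho_c\vert\La^{\mathfrak x}(\la))_c$ on the entire complex, forcing every $\mathfrak l$-summand $L(\mathfrak l,\eta)$ of the homology to satisfy $(\eta+2\rho_c\vert\eta)_c=(\La^{\mathfrak x}(\la)+2\rho_c\vert\La^{\mathfrak x}(\la))_c$. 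This is cleaner and avoids smuggling in the very theorem that (1) and (2) are meant to help establish.
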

For $\la\in \mc{P}(G)$ and $w\in W^0$, we put
\begin{equation}\label{def:L}
\begin{split}
\tL(\G,\La^{\mathfrak x}(\la))&:=
\begin{cases}
L(\G,\La^{\mf d}(\la))\oplus L(\G,\La^{\mf d}(\tilde{\la})), & \text{if $G={\rm O}(2\ell)$}, \\
L(\G,\La^{\mathfrak x}(\la)), & \text{otherwise},
\end{cases}\\
\tL(\mf{l},w\circ\La^{\mathfrak x}(\la))&:=
\begin{cases}
L(\mf{l},w\circ\La^{\mf d}(\la))\oplus L(\mf{l},w\circ\La^{\mf d}(\tilde{\la})), & \text{if $G={\rm O}(2\ell)$}, \\
L(\mf{l},w\circ\La^{\mathfrak x}(\la)), & \text{otherwise}.
\end{cases}
\end{split}
\end{equation}

\subsection{The $\ov{\U}_-$-homology groups of $\SG$-modules}

Recall the dual pair $(\SG,G)$ of type ${\mathfrak
x}\in\{\mf{a,b,c,d}\}$ given in \secref{dualpairs:finite}. For
$\mu\in\ov{\h}^*$ we denote by ${L}(\ov{\mf l},\mu)$ the irreducible
highest weight $\ov{\mf l}$-module with highest weight $\mu$.

\begin{lem}\label{aux411}
Let $\la\in\mc{P}^{\SG}(G)$. The $\overline{\mathfrak l}$-module
$L(\SG,\widehat{\La}_f^{\mathfrak x}(\la))$ is completely reducible.
\end{lem}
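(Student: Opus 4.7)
The plan is to realize $L(\SG,\widehat\La_f^{\mathfrak x}(\la))$ as a direct $\ov{\mf l}$-summand of the oscillator Fock space underlying the Howe duality $(\SG,G)$, and to show that this Fock space is already completely reducible as an $\ov{\mf l}$-module. Concretely, let $\F_{\SG,G}$ denote one of the symmetric superalgebras appearing in Propositions \ref{glpmqn:duality}, \ref{osp:duality}, \ref{spo:duality} or \ref{spopin-duality1}. By the Howe duality,
\begin{equation*}
\F_{\SG,G} \;\cong\; \bigoplus_{\mu\in\mathcal P^{\SG}(G)} L(\SG,\widehat\La_f^{\mathfrak x}(\mu))\otimes V^\mu_G
\end{equation*}
as $\SG\oplus G$-modules, and in particular as $\ov{\mf l}\oplus G$-modules. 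The summand $L(\SG,\widehat\La_f^{\mathfrak x}(\la))\otimes V^\la_G$ is therefore a direct $\ov{\mf l}$-summand, isomorphic as an $\ov{\mf l}$-module to the finite direct sum $L(\SG,\widehat\La_f^{\mathfrak x}(\la))^{\oplus\dim V^\la_G}$.

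The second and main ingredient is the complete reducibility of $\F_{\SG,G}$ as an $\ov{\mf l}$-module. In the type $\mf a$ case one has $\ov{\mf l}=\gl(p|q)\oplus\gl(m|n)\oplus\C K$, and
$\F_{\SG,G}\cong S(\C^{p|q*}\otimes\C^{d*})\otimes S(\C^{m|n}\otimes\C^d)$ splits into the tensor product of two Howe models. Applying the classical Schur--Sergeev $(\gl(p|q),\mathrm{GL}(d))$- and $(\gl(m|n),\mathrm{GL}(d))$-dualities to the two factors, one obtains a decomposition of $\F_{\SG,G}$ into irreducible polynomial $\gl(p|q)\oplus\gl(m|n)$-modules (with $K$ acting by the scalar $d$). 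In the other types, where $\ov{\mf l}=\gl(m|n)\oplus\C K$, I would first restrict the natural $\SG$-module to its Levi, which gives decompositions such as $\C^{2m|2n+1}\cong \C^{m|n}\oplus\C^{m|n *}\oplus\C$ (and analogously for the types $\mf c,\mf d$), and then again apply the $(\gl(m|n),\mathrm{GL})$-Howe duality to decompose the resulting symmetric superalgebra into irreducible polynomial $\gl(m|n)$-modules. Since the category of polynomial representations of $\gl(m|n)$ (resp.\ $\gl(p|q)$) is semisimple by Berele--Regev/Sergeev, this exhibits $\F_{\SG,G}$ as a semisimple $\ov{\mf l}$-module.

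Putting these two steps together: $L(\SG,\widehat\La_f^{\mathfrak x}(\la))\otimes V^\la_G$, being a direct $\ov{\mf l}$-summand of a semisimple $\ov{\mf l}$-module, is itself semisimple; and then complete reducibility of the single copy $L(\SG,\widehat\La_f^{\mathfrak x}(\la))$ follows from the standard fact that a retract of a semisimple module is semisimple. The main obstacle I anticipate is the bookkeeping in the second step, namely verifying in each of the four cases that the embedding of the Levi $\ov{\mf l}$ into $\SG$ (as read off from the Dynkin diagrams in Section 2.4) acts on $\F_{\SG,G}$ in exactly the way required so that classical Schur--Sergeev duality applies and yields only polynomial irreducibles; once this identification is made, semisimplicity is automatic.
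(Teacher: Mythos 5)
Your overall strategy is exactly the paper's: identify $L(\SG,\widehat\La_f^{\mathfrak x}(\la))$ as a direct $\ov{\mf l}$-summand of the Fock space, prove the Fock space is semisimple over $\ov{\mf l}$ (essentially by Schur--Sergeev duality / the complete reducibility of $S(\C^{m|n}\otimes\C^d)$ over $\gl(m|n)$ from [CW1]), and conclude. Your type $\mf a$ argument is fine modulo terminology (the $\gl(p|q)$-constituents from $S(\C^{p|q*}\otimes\C^{d*})$ are duals of polynomial modules, not polynomial modules, but since $\gl(p|q)$ and $\gl(m|n)$ are distinct factors of $\ov{\mf l}$ this causes no harm).

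The gap is in your handling of types $\mf b,\mf c,\mf d$. You propose to restrict the natural $\SG$-module to the Levi, getting $\C^{2m|2n+1}\cong\C^{m|n}\oplus\C^{m|n*}\oplus\C$, and then decompose the symmetric superalgebra accordingly. But the oscillator module is \emph{not} the symmetric algebra of $(\text{natural module})\otimes\C^{d/2}$ as an $\SG$-module, and in particular the Levi does not act on the degree-one part by the restriction of the natural module. The $\spo$/$\osp$-action is by multiplication and contraction operators, and only the Levi $\gl(m|n)$ acts degree-preservingly; reading off the explicit formulas in Section \ref{spo-action}, one sees that \emph{all} the even variables $x_i^k,\bar x_i^k$ have $\gl(m|n)$-weight $+\varepsilon_i$, and similarly for the odd variables — the Levi acts by the uniformly polynomial action on $S(\C^{m|n}\otimes\C^d)$ (tensored with the $\gl(m|n)$-trivial $\La(\C^{d/2})$ in type $\mf b$). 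If instead one followed your proposed decomposition, the Fock space would be written as $S(\C^{m|n}\otimes\C^{d/2})\otimes S(\C^{m|n*}\otimes\C^{d/2})\otimes\cdots$, and tensor products of polynomial with anti-polynomial $\gl(m|n)$-modules are \emph{not} in general completely reducible: for instance $\C^{m|m}\otimes\C^{m|m*}\cong\gl(m|m)$ under the adjoint action fails to be semisimple. So that route, taken literally, breaks down. The fix, which is what the paper does, is to observe directly that the Fock space for $(\osp,\Sp)$ and $(\spo,\mathrm O)$ is $S(\C^{m|n}\otimes\C^d)$ with the standard polynomial $\gl(m|n)$-action (and for $(\spo,\mathrm{Pin})$ it is $S(\C^{m|n}\otimes\C^d)\otimes\La(\C^{d/2})$ with $\gl(m|n)$ acting only on the first factor), so Schur--Sergeev semisimplicity applies immediately.
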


\begin{proof}
Suppose that $\SG=\hgl(p+m|q+n)$. By \cite[Theorem 3.2]{CW1} (also
cf. \cite{S2}), $S(\C^{p|q*}\otimes\C^{d*})$ and $S(\C^{m|n}\otimes
\C^d)$ are completely reducible over $\gl(p|q)$ and $\gl(m|n)$,
respectively.  Indeed $S(\C^{m|n}\otimes \C^d)$ lies in the
semisimple tensor category $\mc{O}_{m|n}^{++}$ of finite dimensional
$\gl(m|n)$-modules with composition factors of the form
$L(\gl(m|n),\la^\natural)$, for some $\la\in\mc{P}^+$ with
$\la_{m+1}\le n$ (\cite[Theorem 3.1]{CK}). Also,
$S(\C^{p|q*}\otimes\C^{d*})$ lies in a similar semisimple tensor
category $\mc{O}_{p|q}^{++*}$. Thus
$S(\C^{p|q*}\otimes\C^{d*}\oplus\C^{m|n}\otimes \C^d)$, as a
$\gl(m|n) \oplus \gl(p|q)$-module, lies in
``$\mc{O}_{m|n}^{++}\otimes\mc{O}_{p|q}^{++*}$'', and hence is
completely reducible over $\ov{\mf l}=\gl(m|n) \oplus
\gl(p|q)\oplus\C K$. Therefore $L(\SG,\widehat{\La}_f^{\mf a}(\la))$
is completely reducible over $\ov{\mf l}$.

If $\SG$ is of type $\mf{b, c, d}$, then
$\ov{\mf{l}}=\gl(m|n)\oplus\C K$. Thus
$L(\SG,\widehat{\La}_f^{\mathfrak x}(\la))$ is completely reducible
over $\ov{\mf{l}}$, since it is a submodule of $S(\C^{m|n}\otimes
\C^d)$ or $S(\C^{m|n}\otimes\C^{d})\otimes
\Lambda(\C^{\frac{d}{2}})$.
\end{proof}

Now consider the homology groups of Lie superalgebras ${\rm
H}_k(\overline{\mathfrak u}_-;L(\SG,\widehat{\La}_f^{\mathfrak
x}(\la)))$, which are defined analogously (see e.g.~\cite{KK}).

\begin{lem}\label{complete:reducibility}
Let $\la\in\mc{P}^{\SG}(G)$. The $\overline{\mathfrak l}$-module
${\rm H}_k(\overline{\mathfrak u}_-;L(\SG,\widehat{\La}_f^{\mathfrak
x}(\la)))$ is completely reducible.
\end{lem}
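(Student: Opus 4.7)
The plan is to realize ${\rm H}_k(\overline{\mf u}_-;L)$, where $L:=L(\SG,\widehat{\La}_f^{\mf x}(\la))$, as an $\ov{\mf l}$-subquotient of the standard Koszul-type complex and then show that the relevant chain group is itself completely reducible over $\ov{\mf l}$. Since complete reducibility is inherited by subquotients, this will imply the lemma.

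First, I would recall that ${\rm H}_k(\ov{\mf u}_-;L)$ is computed from the complex
\begin{equation*}
C_k := \Lambda^k(\ov{\mf u}_-)\otimes L,
\end{equation*}
with the Chevalley--Eilenberg-type differential that is $\ov{\mf l}$-equivariant (here $\Lambda^k$ denotes the $k$-th super-exterior power, taking parity of $\ov{\mf u}_-$ into account). In particular, ${\rm H}_k(\ov{\mf u}_-;L)$ is a subquotient of $C_k$ as an $\ov{\mf l}$-module, so it suffices to show that each $C_k$ is completely reducible over $\ov{\mf l}$.

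Next, I would identify $\ov{\mf u}_-$ explicitly as an $\ov{\mf l}$-module under the adjoint action. For $\SG=\hgl(p+m|q+n)$, $\ov{\mf u}_-$ is isomorphic to $\C^{m|n}\otimes(\C^{p|q})^{*}$ (with an appropriate central character of $K$), so it lies in the ``product'' of the semisimple tensor categories $\mc O^{++}_{m|n}$ and $\mc O^{++*}_{p|q}$ used in the proof of \lemref{aux411}. For $\SG$ of type $\mf b$, $\mf c$, or $\mf d$, the Levi is $\gl(m|n)\oplus\C K$, and $\ov{\mf u}_-$ is a subquotient of $\C^{m|n}\otimes\C^{m|n}$ (a symmetric or skew-supersymmetric square, depending on type), which again lies in the same semisimple tensor category $\mc O^{++}_{m|n}$ as the one in \lemref{aux411}. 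Since these categories are closed under tensor products, all exterior/symmetric powers $\Lambda^k(\ov{\mf u}_-)$ lie in the respective semisimple tensor categories as well.

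Finally, combining this with \lemref{aux411}, the tensor product $C_k = \Lambda^k(\ov{\mf u}_-)\otimes L$ belongs to the same semisimple tensor category of $\ov{\mf l}$-modules, and is therefore completely reducible. As ${\rm H}_k(\ov{\mf u}_-;L)$ is an $\ov{\mf l}$-subquotient of $C_k$, it is completely reducible as well. The main subtlety is step two: one must verify that $\ov{\mf u}_-$ as an $\ov{\mf l}$-module (and its exterior powers) indeed lies in the relevant semisimple tensor category; in each of the four cases this follows from the explicit description of $\ov{\mf u}_-$ as a tensor product (or symmetric/skew square) of natural and conatural modules of $\gl(m|n)$ and $\gl(p|q)$, together with the fact that the tensor categories $\mc O^{++}_{m|n}$ (and $\mc O^{++*}_{p|q}$) invoked in \lemref{aux411} are tensor subcategories.
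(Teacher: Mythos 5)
Your proof is correct and takes essentially the same approach as the paper: both realize $\mathrm{H}_k(\ov{\mf u}_-;L)$ as an $\ov{\mf l}$-subquotient of $\Lambda^k(\ov{\mf u}_-)\otimes L$ and show that this chain group lies in the semisimple tensor category of $\ov{\mf l}$-modules used in \lemref{aux411}. The only cosmetic difference is that the paper cites \cite[Theorem 3.3]{CW1} to place $\Lambda^k(\ov{\mf u}_-)$ in the category, while you unpack this by identifying $\ov{\mf u}_-$ directly as a tensor product (or second (skew-)supersymmetric power) of (co)natural modules; the two are interchangeable.
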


\begin{proof}
Suppose that $\SG=\hgl(p+m|q+n)\supseteq \ov{\mathfrak l} =\gl(m|n)
\oplus \gl(p|q)\oplus\C K$. It follows from \cite[Theorem 3.3]{CW1}
that $\La^k(\bar{\mathfrak u}_-)$ lies in
``$\mc{O}_{m|n}^{++}\otimes\mc{O}_{p|q}^{++*}$''. By \lemref{aux411}
$L(\G,\widehat{\La}_f^{\mf a}(\la))$, as an $\gl(m|n) \oplus
\gl(p|q)$-module, lies in
``$\mc{O}_{m|n}^{++}\otimes\mc{O}_{p|q}^{++*}$'', and hence
$\Lambda^k(\bar{\mathfrak u}_-)\otimes L(\SG,\widehat{\La}_f^{\mf
a}(\la))$ is a completely reducible $\bar{\mathfrak l}$-module.
Since any subquotient of a completely reducible module is also
completely reducible, the result follows.

The ortho-symplectic cases are easier and hence omitted.
\end{proof}

The proof of \lemref{complete:reducibility} implies that if
$L(\ov{\mf l},\gamma)$ is an $\ov{\mf l}$-submodule of ${\rm
H}_k(\overline{\mathfrak u}_-;L(\SG,\widehat{\La}_f^{\mf x}(\la)))$,
then $\gamma\in\mc{P}^{++}_{\ov{\mf l}}$. We have the following
super-analogue of the action of the Casimir operator on homology
groups (see~\propref{eigenvalue:classical}).

\begin{prop}\label{eigenvalue:super}
Let $\gamma\in\mathcal P^{++}_{\bar{\mathfrak l}}$.  If
$L(\bar{\mathfrak l},\gamma)$ is a summand of ${\rm
H}_k(\overline{\mathfrak u}_-;L(\SG,\widehat{\La}_f^{\mf x}(\la)))$,
then $(\gamma+2\rho_s\vert\gamma)_s=(\widehat{\La}_f^{\mf
x}(\la)+2\rho_s\vert\widehat{\La}_f^{\mf x}(\la))_s$.
\end{prop}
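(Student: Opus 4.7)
The plan is to imitate the classical Casselman--Osborne argument that underpins Proposition~\ref{eigenvalue:classical}. Put $V := L(\SG, \widehat{\La}_f^{\mf x}(\la))$ and $c_\la := (\widehat{\La}_f^{\mf x}(\la) + 2\rho_s \mid \widehat{\La}_f^{\mf x}(\la))_s$.

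Since $\overline{\Omega} \in U(\SG)$ is central, it commutes with the $\overline{\mathfrak u}_-$-action on $V$ and hence descends to an endomorphism of ${\rm H}_k(\overline{\mathfrak u}_-; V)$. By \secref{super:casimir}, $\overline{\Omega}$ acts on $V$ as multiplication by $c_\la$, so it acts as $c_\la$ on ${\rm H}_k(\overline{\mathfrak u}_-; V)$ and in particular on every $\bar{\mathfrak l}$-summand $L(\bar{\mathfrak l}, \gamma)$.

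A second, independent computation of this scalar proceeds via the super PBW decomposition $U(\SG) = U(\overline{\mathfrak u}_-) \cdot U(\bar{\mathfrak l}) \cdot U(\overline{\mathfrak u}_+)$: I would write $\overline{\Omega} = p(\overline{\Omega}) + r$ with $p(\overline{\Omega}) \in U(\bar{\mathfrak l})$ and $r \in \overline{\mathfrak u}_- U(\SG) + U(\SG) \overline{\mathfrak u}_+$. Starting from \eqnref{supercasimir} and super-commuting each $u_\beta$ (for $\beta \in \overline{\Delta}^+(S)$, so $u_\beta \in \overline{\mathfrak u}_+$) past its partner $u^\beta$, one obtains
\begin{equation*}
p(\overline{\Omega}) \;=\; \overline{\Gamma}_1 + 2\sum_{\beta \in \overline{\Delta}^+_S} u^\beta u_\beta + 2 H_+,
\end{equation*}
where $H_+ \in \bar{\mathfrak h}$ is the Cartan term produced by the supercommutators $\pm[u^\beta, u_\beta]$ with $\beta \in \overline{\Delta}^+(S)$. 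A short computation using the bilinear form $(\cdot \mid \cdot)_s$ identifies $H_+$ as the element that pairs with any weight $\mu$ to give $(\mu \mid 2\rho_s - 2\rho_{s, \bar{\mathfrak l}})_s$, where $\rho_{s, \bar{\mathfrak l}}$ denotes the half-sum of positive roots of $\bar{\mathfrak l}$. Consequently, on a $\bar{\mathfrak l}$-highest weight vector in $L(\bar{\mathfrak l}, \gamma)$, the middle summand annihilates the vector while $\overline{\Gamma}_1 + 2H_+$ acts as the scalar $(\gamma + 2\rho_s \mid \gamma)_s$.

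The super analogue of the Casselman--Osborne lemma then identifies the actions of $\overline{\Omega}$ and $p(\overline{\Omega})$ on ${\rm H}_k(\overline{\mathfrak u}_-; V)$: the residual term $r$ annihilates every homology class by the standard cycle-representative argument, adapted with the expected sign conventions to the $\Z_2$-graded setting (along the lines of the super adaptations carried out in \cite{CK}). Equating the two resulting scalars on $L(\bar{\mathfrak l}, \gamma)$ yields the desired identity. The principal technical obstacle is the verification of this super Casselman--Osborne lemma: the overall strategy is classical, but the parabolic PBW decomposition and the supercommutation signs demand careful bookkeeping in the super setting.
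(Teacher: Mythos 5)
Your overall strategy is exactly the one the paper invokes: the paper's proof of Proposition~\ref{eigenvalue:super} simply says it "follows the same type of arguments as for \cite[Proposition~18]{L}," i.e.\ a Casselman--Osborne type comparison of the Casimir with its Harish--Chandra projection to $U(\bar{\mathfrak l})$, taking care to build the Levi Casimir with the \emph{same} $\rho_s$ and the same form $(\cdot\mid\cdot)_s$. Your proposal expands this into an explicit argument, which is appropriate, and you correctly flag that the real content is the super Casselman--Osborne identity on the Koszul complex; the paper leaves that to \cite{L} as well, so you and the paper land at the same level of detail on the genuinely technical point.

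There is, however, one computational slip in the middle of your argument, and it is worth fixing because as written it is internally inconsistent. You claim to obtain $p(\overline{\Omega}) = \overline{\Gamma}_1 + 2\sum_{\beta\in\overline{\Delta}^+_S}u^\beta u_\beta + 2H_+$ by supercommuting $u_\beta$ past $u^\beta$ for $\beta\in\overline{\Delta}^+(S)$. No such commutation is needed, and doing it actually moves you out of the residual space: for $\beta\in\overline{\Delta}^+(S)$ one has $u^\beta\in\overline{\mathfrak u}_-$ and $u_\beta\in\overline{\mathfrak u}_+$, so $u^\beta u_\beta$ already lies in $\overline{\mathfrak u}_-U(\SG)$, while $u_\beta u^\beta$ lies in $U(\SG)\overline{\mathfrak u}_-$, which is not in your residual set $\overline{\mathfrak u}_-U(\SG)+U(\SG)\overline{\mathfrak u}_+$. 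The correct projection is therefore simply $p(\overline{\Omega}) = \overline{\Gamma}_1 + 2\sum_{\beta\in\overline{\Delta}^+_S}u^\beta u_\beta$, with no Cartan correction. Note moreover that your stated answer is self-contradictory: by the paper's definition in \secref{super:casimir}, $\overline{\Gamma}_1$ already acts on a weight-$\gamma$ vector as $(\gamma+2\rho_s\mid\gamma)_s$ (the $\rho_s$-shift is built in, not a $\rho_{s,\bar{\mathfrak l}}$-shift), so adding a nonzero $2H_+$ would spoil the scalar you claim to obtain. The $H_+$-correction would only enter if one insisted on using the intrinsic Casimir of $\bar{\mathfrak l}$ built with $\rho_{s,\bar{\mathfrak l}}$; the paper's remark that "we use the same bilinear form ... and the same $\rho_s$ to define the corresponding Casimir operator for $\ov{\mf{l}}$" is precisely the device that makes this correction unnecessary. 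Once the spurious $H_+$ is removed, your derivation agrees with the paper's route.
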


\begin{proof}
The proof follows the same type of arguments as for
\cite[Proposition~18]{L} and thus will be omitted. We only remark
that in the process we use the same bilinear form
$(\cdot\vert\cdot)_s$ and the same $\rho_s$ to define the
corresponding Casimir operator for $\ov{\mf{l}}$ as in
\eqnref{supercasimir}.
\end{proof}

\subsection{Formulas for the $\overline{\mathfrak u}_-$-homology groups of $\SG$-modules}

Recalling $\vartheta(\La^{\mathfrak x}(\la))
=\widehat{\La}_f^{\mathfrak x}(\la)$ for $\la\in\mc{P}^{\SG}(G)$, we
define $\tL(\SG,\widehat{\La}_f^{\mathfrak x}(\la))$ and
$\tL(\ov{\mf{l}},\vartheta(w\circ\La^{\mathfrak x}(\la)))$ similarly
as in \eqref{def:L}.

\begin{lem}\label{aux112}
Let $\la\in\mathcal P^{\SG}(G)$ and $\bar{\mu}\in{\mathcal
P}^{++}_{\bar{\mathfrak l}}$. If $L(\bar{\mathfrak l},\bar{\mu})$
appears in the decomposition of $\Lambda^k\bar{{\mathfrak
u}}_-\otimes \tL (\SG,\widehat{\La}^{\mathfrak x}_f(\la))$ with
multiplicity $m_{\ov\mu}$, then there exists a unique
$\mu\in\mathcal P^{++}_{\mathfrak l}$ with
$\vartheta(\mu)=\bar{\mu}$, and $L(\mathfrak l,\mu)$ appears in the
decomposition of $\Lambda^k\mathfrak u_-\otimes\tL
(\G,\La^{\mathfrak x}(\la))$ with the same multiplicity
$m_{\ov\mu}$.
\end{lem}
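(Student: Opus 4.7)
The uniqueness of $\mu$ with $\vartheta(\mu)=\ov\mu$ is immediate from the bijectivity of $\vartheta:\mc{P}^{++}_{\mf l}\to\mc{P}^{++}_{\ov{\mf l}}$ recorded in \secref{casimir:op}. For the equality of multiplicities my plan is to pass to characters and apply the character map $\omega^{\mf x}$ introduced in \secref{sec:char} (\remref{omega-ring} for $\mf x=\mf a$, with analogous statements for $\mf x=\mf b,\mf c,\mf d$), which by its very construction is multiplicative on characters and sends ${\rm ch}\,\tL(\G,\La^{\mf x}(\la))$ to ${\rm ch}\,\tL(\SG,\widehat{\La}^{\mf x}_f(\la))$.

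The heart of the proof is to establish the two auxiliary transfer identities
\begin{equation*}
\omega^{\mf x}\bigl({\rm ch}\,\Lambda^k\mf{u}_-\bigr)={\rm ch}\,\Lambda^k\ov{\mf{u}}_-,\quad
\omega^{\mf x}\bigl({\rm ch}\,\tL(\mf{l},\mu)\bigr)={\rm ch}\,\tL(\ov{\mf{l}},\vartheta(\mu)),\ \mu\in\mc{P}^{++}_{\mf l}.
\end{equation*}
Both follow by inspection of the explicit sequence of variable substitutions and symmetric-function involutions $\omega$ (which exchanges elementary and complete symmetric functions, equivalently $\prod(1-z)^{-1}\leftrightarrow\prod(1+z)$) appearing in the proofs of \thmref{superchar:A}--\thmref{superchar:oddSPO}. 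For the first identity, write ${\rm ch}\,\Lambda^k\mf{u}_-$ as the degree-$k$ piece of the formal product $\prod_{\alpha\in\Delta^-(S)}(1+e^\alpha)$; the $\omega$-involution turns this, root by root, into the corresponding product for $\ov{\mf{u}}_-$ with each odd (resp.\ even) root space contributing the correct exterior (resp.\ symmetric) factor, yielding ${\rm ch}\,\Lambda^k\ov{\mf u}_-$ after the truncation built into $\omega^{\mf x}$. For the second, recall that $\mf l$ is a direct sum of one or two copies of $\widehat{\mf{gl}}_\infty$; hence ${\rm ch}\,\tL(\mf{l},\mu)$ is a product of Schur functions (\propref{char:schur} applied to $\mf l$), and the composition of maps defining $\omega^{\mf x}$ is precisely the one converting each such Schur function into a hook Schur function via \eqnref{hook:def}, giving ${\rm ch}\,\tL(\ov{\mf l},\vartheta(\mu))$ by the Weyl character formula for $\ov{\mf l}$.

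With these in place, integrability of $\tL(\G,\La^{\mf x}(\la))$ over $\mf l$ (from \propref{eigenvalue:classical}) makes $\Lambda^k\mf{u}_-\otimes\tL(\G,\La^{\mf x}(\la))$ a completely reducible $\mf l$-module with a character expansion $\sum_{\mu\in\mc{P}^{++}_{\mf l}}m_\mu\,{\rm ch}\,\tL(\mf l,\mu)$. Applying $\omega^{\mf x}$ and using multiplicativity together with the two transfer identities gives
\begin{equation*}
{\rm ch}\bigl(\Lambda^k\ov{\mf{u}}_-\otimes\tL(\SG,\widehat{\La}^{\mf x}_f(\la))\bigr)=\sum_{\mu\in\mc{P}^{++}_{\mf l}}m_\mu\,{\rm ch}\,\tL(\ov{\mf l},\vartheta(\mu)).
\end{equation*}
By \lemref{complete:reducibility} (and its straightforward extension to the complex $\Lambda^k\ov{\mf u}_-\otimes\tL(\SG,\widehat{\La}^{\mf x}_f(\la))$) the left-hand side is a direct sum of irreducible $\ov{\mf l}$-modules, which by the proof of that lemma are necessarily labelled by $\mc{P}^{++}_{\ov{\mf l}}$. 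Linear independence of irreducible $\ov{\mf l}$-characters then forces $m_{\vartheta(\mu)}=m_\mu$, which is the desired equality of multiplicities.

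The main obstacle is the first transfer identity: one has to check carefully that the final evaluation-at-zero step of $\omega^{\mf x}$ matches the finite-dimensional cutoff inside $\ov{\mf u}_-$ and that the intermediate involutions flip the parity of each root space correctly. This is the same Koszul-type mechanism $\mathrm{Sym}\leftrightarrow\Lambda$ that already drives the equalities \eqnref{combid-classical1}--\eqnref{combid-classical-b} versus \eqnref{glpmqn:totalchar}, \eqnref{ops2m2n:totalchar}, \eqnref{spo2m2n:totalchar1}, and \eqnref{spo2m2n+1:totalchar1}, so although routine it requires type-by-type bookkeeping.
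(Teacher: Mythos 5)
Your proposal is correct and follows essentially the same route as the paper's proof: use the multiplicativity of $\omega^{\mf x}$ together with the transfer identities $\omega^{\mf x}({\rm ch}\,\Lambda^k\mf u_-)={\rm ch}\,\Lambda^k\ov{\mf u}_-$ and $\omega^{\mf x}({\rm ch}\,\tL(\G,\La^{\mf x}(\la)))={\rm ch}\,\tL(\SG,\widehat{\La}^{\mf x}_f(\la))$ to transport the character of $\Lambda^k\mf u_-\otimes\tL(\G,\La^{\mf x}(\la))$ to that of $\Lambda^k\ov{\mf u}_-\otimes\tL(\SG,\widehat{\La}^{\mf x}_f(\la))$, then compare $\mf l$- and $\ov{\mf l}$-decompositions via the bijection $\vartheta$. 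You spell out somewhat more detail on the two auxiliary transfer identities (and the one sending ${\rm ch}\,\tL(\mf l,\mu)$ to ${\rm ch}\,\tL(\ov{\mf l},\vartheta(\mu))$) than the paper, which simply asserts them; the only small inaccuracy is describing $\mf l$ as a sum of copies of $\widehat{\gl}_\infty$ when it is in fact a sum of copies of $\gl_\infty$, but this does not affect the argument.
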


\begin{proof} Since $\omega^{\mathfrak x}\left({\rm
ch}\tL(\G,\La^{\mathfrak x}(\la))\right)={\rm
ch}\tL(\SG,\widehat{\La}^{\mathfrak x}_f(\la))$, for
$\la\in\mc{P}^{\SG}(G)$, and $\omega^{\mathfrak x}\left({\rm
ch}\La^k({\mf u}_-)\right)={\rm ch}\La^k(\ov{\mf u}_-)$, we conclude
that
\begin{equation*}
\omega^{\mathfrak x}\left({\rm ch}\big[\La^k({\mf u}_-)\otimes
\tL(\G,\La^{\mathfrak x}(\la))\big]\right)={\rm
ch}\big[\La^k(\ov{\mf u}_-)\otimes
\tL(\SG,\widehat{\La}^{\mathfrak x}_f(\la))\big].
\end{equation*}
Since $\vartheta$ is a bijection on $\mc{P}^{++}_{\mf l}$, there
exists a unique $\mu\in\mc{P}^{++}_{\mf l}$ such that
$\vartheta(\mu)=\ov{\mu}$. Therefore $L(\mathfrak l,\mu)$ is a
composition factor of
 $\Lambda^k\mathfrak u_-\otimes
\tL(\G,\La^{\mf a}(\la))$, if $L(\bar{\mathfrak l},\vartheta(\mu))$
is a composition factor of $\Lambda^k\bar{{\mathfrak u}}_-\otimes
\tL(\SG,\widehat\La^{\mathfrak x}(\la))$. Furthermore they have the
same multiplicity.
\end{proof}

\begin{lem}\label{aux222}
Let $\la\in\mathcal P^{\SG}(G)$ and $\eta\in\mc{P}^{++}_{\ov{\mf
l}}$ such that  ${L}(\ov{\mf{l}},\eta)$ is a summand of
$\La^k(\SU_-)\otimes\tL(\SG,\widehat{\La}_f^{\mathfrak x}(\la))$.
Then
$(\eta+2\rho_s\vert\eta)_s=(\widehat{\La}^{\mathfrak
x}_f(\la)+2\rho_s\vert\widehat{\La}^{\mathfrak x}_f(\la))_s$ if and
only if there exists $w\in W^0_k(\mf{x})$ with
$\eta=\vartheta(w\circ\La^{\mathfrak x}(\la))$.
\end{lem}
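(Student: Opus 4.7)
The plan is to reduce this statement to the corresponding classical result on the $\G$-side via the bijection $\vartheta$, using Lemma \ref{aux112} and Lemmas \ref{aux113}/\ref{aux113-cd} as the two bridges. No new computation should be required; the content is essentially a bookkeeping composition of previously established equivalences.

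For the forward direction, suppose that $L(\ov{\mf l},\eta)$ is a summand of $\La^k(\SU_-)\otimes \tL(\SG,\widehat\La_f^{\mf x}(\la))$ satisfying the Casimir equality $(\eta+2\rho_s|\eta)_s=(\widehat\La_f^{\mf x}(\la)+2\rho_s|\widehat\La_f^{\mf x}(\la))_s$. I would first invoke Lemma \ref{aux112} to produce the unique $\mu\in\mc{P}^{++}_{\mf l}$ with $\vartheta(\mu)=\eta$ and such that $L(\mf l,\mu)$ appears in $\La^k(\mf u_-)\otimes \tL(\G,\La^{\mf x}(\la))$. Then I would apply the ``in particular'' clause of Lemma \ref{aux113} (in type $\mf a$) or Lemma \ref{aux113-cd} (in types $\mf{b,c,d}$) to transfer the Casimir equality across $\vartheta$, yielding $(\mu+2\rho_c|\mu)_c=(\La^{\mf x}(\la)+2\rho_c|\La^{\mf x}(\la))_c$. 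Proposition \ref{eigenvalue:classical}(1) then forces $\mu=w\circ\La^{\mf x}(\la)$ for a unique $w\in W^0_k(\mf x)$, and applying $\vartheta$ concludes $\eta=\vartheta(w\circ\La^{\mf x}(\la))$.

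For the converse, suppose $\eta=\vartheta(w\circ\La^{\mf x}(\la))$ with $w\in W^0_k(\mf x)$. Since $\vartheta$ is a bijection $\mc{P}^{++}_{\mf l}\to\mc{P}^{++}_{\ov{\mf l}}$, this identifies $\mu:=w\circ\La^{\mf x}(\la)\in\mc{P}^{++}_{\mf l}$ as the unique preimage. The standard Weyl-invariance of the shifted form gives
\begin{equation*}
(\mu+2\rho_c|\mu)_c=(w(\La^{\mf x}(\la)+\rho_c)|w(\La^{\mf x}(\la)+\rho_c))_c-(\rho_c|\rho_c)_c=(\La^{\mf x}(\la)+2\rho_c|\La^{\mf x}(\la))_c,
\end{equation*}
and the same ``in particular'' clause of Lemma \ref{aux113} or \ref{aux113-cd} translates this back into the desired Casimir equality on the $\SG$-side.

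The argument is essentially mechanical once the ingredients are assembled, so there is no serious obstacle; the main care needed is to treat the type $\mf a$ and the ortho-symplectic cases uniformly by invoking the appropriate one of Lemma \ref{aux113} or \ref{aux113-cd}, and to remember that in the orthogonal case $G={\rm O}(2\ell)$ one should work throughout with the doubled objects $\tL$ as defined in \eqref{def:L}, which is exactly how Lemma \ref{aux112} and the classical eigenvalue Proposition \ref{eigenvalue:classical} are packaged.
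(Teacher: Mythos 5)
Your proposal is correct and mirrors the paper's argument exactly: the converse direction uses $W$-invariance of $(\cdot|\cdot)_c$ together with the ``in particular'' transfer in Lemmas~\ref{aux113}/\ref{aux113-cd}, while the forward direction pulls $\eta$ back through $\vartheta$ via Lemma~\ref{aux112}, transfers the Casimir equality, and invokes Proposition~\ref{eigenvalue:classical}(1). No gap; the only implicit point, which the paper states explicitly, is that writing $\eta=\vartheta(w\circ\La^{\mathfrak x}(\la))$ already presupposes $w\circ\La^{\mathfrak x}(\la)\in\mc{P}^{++}_{\mf l}$ so that $\vartheta$ is defined on it.
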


\begin{proof}
Assume that there exists $w\in W^0_k({\mathfrak x})$ with
$w\circ\La^{\mathfrak x}(\la)\in\mc{P}^{++}_{\mf l}$. Set
$\eta=\vartheta(w\circ\La^{\mathfrak x}(\la))$. Then by
\lemref{aux113} and \lemref{aux113-cd} and the $W$-invariance of
$(\cdot\vert\cdot)_c$ we have
\begin{align*}
(\eta+2\rho_s\vert \eta)_s& =\pm(w\circ\La^{\mathfrak
x}(\la)+2\rho_c\vert
w\circ\La^{\mathfrak x}(\la))_c+\ov{C} \\
&= \pm(\La^{\mathfrak x}(\la)+2\rho_c\vert \La^{\mathfrak
x}(\la))_c+\ov{C}= (\widehat{\La}^{\mathfrak x}_f(\la)+2\rho_s\vert
\widehat{\La}^{\mathfrak x}_f(\la))_s.
\end{align*}
On the other hand suppose that
$(\eta+2\rho_s\vert\eta)_s=(\widehat{\La}^{\mathfrak
x}_f(\la)+2\rho_s\vert\widehat{\La}^{\mathfrak x}_f(\la))_s$. By
\lemref{aux112} there exists a unique
$\vartheta^{-1}(\eta)\in\mc{P}^{++}_{\mf l}$ with
$\vartheta(\vartheta^{-1}(\eta))=\eta$ satisfying the conditions of
\lemref{aux112}. By \lemref{aux113} and \lemref{aux113-cd}, we have
\begin{equation*}
(\vartheta^{-1}(\eta)+2\rho_c\vert\vartheta^{-1}(\eta))_c=(\La^{\mathfrak
x}(\la)+2\rho_c\vert\La^{\mathfrak x}(\la))_c.
\end{equation*}
By \propref{eigenvalue:classical} (1) there exists $w\in
W^0_k(\mathfrak x)$ such that
$\vartheta^{-1}(\eta)=w\circ\La^{\mathfrak x}(\la)$.
\end{proof}

Let $W^0_k(\mathfrak x)'$ denote the subset of $W^0_k(\mathfrak x)$
consisting of those $w$ with $\vartheta(w\circ\La^{\mathfrak
x}(\la)) \in \mc{P}^{++}_{\ov{\mf{l}}}$.  The proof of the following
theorem is inspired by \cite{A}.

\begin{thm}\label{mainthm}
Let $\la\in\mathcal P^{\SG}(G)$, $k\in\Z_+$, and $\vartheta$ be as
before. As $\ov{\mf{l}}$-modules we have
\begin{equation*}
{\rm H}_k(\ov{\mf{u}}_-;\tL(\SG,\widehat{\La}^{\mathfrak
x}_f(\la)))\cong\bigoplus_{w\in W^0_k(\mathfrak x)'}
\tL(\ov{\mf{l}},\vartheta(w\circ\La^{\mathfrak x}(\la))).
\end{equation*}
In particular, ${\rm ch}\big{[}{\rm H}_k(\bar{\mathfrak
u}_-;\tL(\SG,\widehat{\La}^{\mathfrak
x}_f(\la)))\big{]}=\omega^{\mathfrak x}\big{(}{\rm ch}\big{[}{\rm
H}_k({\mathfrak u}_-;\tL(\G,\La^{\mathfrak x}(\la))\big{]}\big{)}$.
\end{thm}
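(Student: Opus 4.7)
The plan is to combine a Casimir eigenvalue analysis, which pins down which irreducibles can appear as $\ov{\mf l}$-summands of each $\mathrm{H}_k$, with the Euler--Poincaré principle and the character formulas from Section \ref{sec:char} to fix the multiplicities.

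First I would decompose $\mathrm{H}_k(\ov{\mf u}_-;\tL(\SG,\widehat{\La}^{\mf x}_f(\la)))$ into $\ov{\mf l}$-isotypic components using the complete reducibility established in \lemref{complete:reducibility}. If $L(\ov{\mf l},\eta)$ is any summand, then by \propref{eigenvalue:super} its Casimir eigenvalue coincides with $(\widehat{\La}^{\mf x}_f(\la)+2\rho_s|\widehat{\La}^{\mf x}_f(\la))_s$. Since $\mathrm{H}_k$ is a subquotient of the completely reducible $\ov{\mf l}$-module $\La^k(\ov{\mf u}_-)\otimes\tL(\SG,\widehat{\La}^{\mf x}_f(\la))$, I can invoke \lemref{aux222}: the eigenvalue condition forces $\eta=\vartheta(w\circ\La^{\mf x}(\la))$ for some $w\in W^0_k(\mf x)$, and the requirement $\eta\in\mc{P}^{++}_{\ov{\mf l}}$ places $w$ in the subset $W^0_k(\mf x)'$. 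Consequently I can write
\[
\mathrm{ch}\,\mathrm{H}_k(\ov{\mf u}_-;\tL(\SG,\widehat{\La}^{\mf x}_f(\la)))=\sum_{w\in W^0_k(\mf x)'} n_{k,w}\,\mathrm{ch}\,\tL(\ov{\mf l},\vartheta(w\circ\La^{\mf x}(\la)))
\]
for some non-negative integers $n_{k,w}$, and it remains to show $n_{k,w}=1$.

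To compute the $n_{k,w}$ I would equate two expressions for $\sum_k(-1)^k\mathrm{ch}\,\mathrm{H}_k$. The Euler--Poincaré principle applied to the Koszul-type complex computing $\ov{\mf u}_-$-homology gives
\[
\sum_k(-1)^k\mathrm{ch}\,\mathrm{H}_k(\ov{\mf u}_-;\tL(\SG,\widehat{\La}^{\mf x}_f(\la)))=D^{\mf x}_f\cdot\mathrm{ch}\,\tL(\SG,\widehat{\La}^{\mf x}_f(\la)),
\]
where $D^{\mf x}_f$ is precisely the denominator appearing in Theorems \ref{superchar:A}--\ref{superchar:oddSPO} (the supercharacter $\sum_k(-1)^k\mathrm{ch}\,\La^k(\ov{\mf u}_-)$). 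Substituting those character formulas, the right-hand side collapses to $\sum_k(-1)^k\sum_{w\in W^0_k(\mf x)} hs_{\la_w}(\eta,x)$ (with the obvious modification $hs_{\la^+_w}\cdot hs_{\la^-_w}$ in type $\mf a$). Since $hs_\mu(y_1,\ldots,y_m,\xi_1,\ldots,\xi_n)$ vanishes unless $\mu$ is an $(m|n)$-hook partition, and equals $\mathrm{ch}\,L(\gl(m|n),\mu^\natural)$ when the hook condition is satisfied, the sum restricts automatically to $w\in W^0_k(\mf x)'$, and after absorbing the central $K$-action via $\widetilde{\La}^{\mf x}_0$ each nonzero term coincides with $\mathrm{ch}\,L(\ov{\mf l},\vartheta(w\circ\La^{\mf x}(\la)))$.

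Equating the two expressions and using linear independence of the irreducible $\ov{\mf l}$-characters -- together with injectivity of $w\mapsto\vartheta(w\circ\La^{\mf x}(\la))$ and the fact that each $w\in\bigsqcup_k W^0_k(\mf x)'$ lies in a unique $W^0_k(\mf x)'$ -- forces $n_{k,w}=1$ for every admissible $w$, establishing the module isomorphism. The accompanying character identity ${\rm ch}\,\mathrm{H}_k(\ov{\mf u}_-;\tL(\SG,\widehat{\La}^{\mf x}_f(\la)))=\omega^{\mf x}\bigl(\mathrm{ch}\,\mathrm{H}_k(\mf u_-;\tL(\G,\La^{\mf x}(\la)))\bigr)$ then follows immediately from \propref{char:schur} and the definition of $\omega^{\mf x}$. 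The main technical obstacle is the careful bookkeeping in the two finer cases: in type $\mf a$ one must track two hook conditions simultaneously (for $\gl(m|n)$ and $\gl(p|q)$, since $\ov{\mf l}=\gl(m|n)\oplus\gl(p|q)\oplus\C K$), while for $G=\mathrm{O}(2\ell)$ with $\la\neq\tilde\la$ the entire argument must be carried out with the sum $\tL=L\oplus L$ in place of a single irreducible, as already anticipated in the definition of $\omega^{\mf d}$.
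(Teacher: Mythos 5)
Your proposal is correct and follows essentially the same route as the paper: complete reducibility plus Proposition~\ref{eigenvalue:super} and Lemma~\ref{aux222} restrict the possible $\ov{\mf l}$-summands of $\mathrm{H}_k$ to $\tL(\ov{\mf l},\vartheta(w\circ\La^{\mathfrak x}(\la)))$ with $w\in W^0_k(\mathfrak x)'$, and the Euler--Poincar\'e principle combined with Theorems~\ref{superchar:A}--\ref{superchar:oddSPO} pins the multiplicities at $1$ because each $w$ lies in exactly one $W^0_k(\mathfrak x)'$ (so the alternating sum carries no inter-degree cancellation). The one cosmetic difference is that the paper first records the multiplicity-one (or two, for $\tilde\la\neq\la$) statement on the $\G$-side via Proposition~\ref{eigenvalue:classical}~(1) and transports it through Lemma~\ref{aux112}, whereas you bypass that bound and extract the multiplicities directly from linear independence of the characters $\mathrm{ch}\,\tL(\ov{\mf l},\vartheta(w\circ\La^{\mathfrak x}(\la)))$; both are fine, and your remark that the vanishing of non-hook $hs_\mu$ is what shrinks $W^0_k(\mathfrak x)$ to $W^0_k(\mathfrak x)'$ makes explicit a step the paper leaves implicit.
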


\begin{proof}
Let $\mu\in\mathcal P^+_{\mathfrak l}$ be such that $L(\mathfrak
l,\mu)$ is a summand of ${\rm H}_k({\mathfrak
u}_-;\tL(\G,\La^{\mathfrak x}(\la)))$. Then it is precisely a
summand of $\Lambda^k\mathfrak u_-\otimes \tL(\G,\La^{\mathfrak
x}(\la))$ with $(\mu+2\rho_c\vert\mu)_c=(\La^{\mathfrak
x}(\la)+2\rho_c\vert\La^{\mathfrak x}(\la))_c$ by
\propref{eigenvalue:classical}. Furthermore each appears with
multiplicity one or two (cf. Remark 5.2 \cite{CK}). By
\lemref{aux222} the corresponding $\vartheta(\mu)$'s, for
$\mu\in\mc{P}^{++}_{\mf l}$, are precisely the weights in
$\mc{P}^{++}_{\ov{\mf l}}$ such that $L(\ov{\mf{l}},\vartheta(\mu))$
appear as summands of $\Lambda^k\bar{{\mathfrak u}}_-\otimes
\tL(\SG,\widehat{\La}_f^{\mathfrak x}(\la))$ with
$(\vartheta(\mu)+2\rho_s\vert\vartheta(\mu))_s
=(\widehat{\La}^{\mathfrak
x}_f(\la)+2\rho_s\vert\widehat{\La}^{\mathfrak x}_f(\la))_s$;
moreover it appears with the same multiplicity.

Theorems \ref{superchar:A}, \ref{superchar:OSP}, and
\ref{superchar:SPO} together with the Euler-Poincar\'e principle
imply that
\begin{align*}
&\sum_{k=0}^\infty(-1)^k{\rm ch}\big{[}{\rm H}_k(\bar{\mathfrak
u}_-;\tL(\SG,\widehat{\La}^{\mathfrak x}_f(\la)))\big{]}
=\sum_{k=0}^\infty(-1)^k\sum_{w\in W^0_k(\mathfrak x)'}
\mc{H}_{\la,w},
\end{align*}
where
\begin{align*}
\mc{H}_{\la,w}  &=\begin{cases} hs_{\la^+_w}(\xi,y)
hs_{\la^-_w}(\eta^{-1},x^{-1}),
& \text{if $G={\rm GL}(d)$}, \\
hs_{\la_w}(\eta,x),
& \text{if $G={\rm Sp}(d)$, ${\rm O}(2\ell+1)$, ${\rm Pin}(d)$ },\\
hs_{\la_w}(\eta,x) +hs_{\tilde{\la}_w}(\eta,x), & \text{if $G={\rm
O}(2\ell)$}.
\end{cases}
\\
&={\rm ch}\tL(\ov{\mf{l}},\vartheta(w\circ\La^{\mathfrak
x}(\la))).
\end{align*}
Since all the highest weights are distinct, we conclude from
Proposition \ref{eigenvalue:super} that
\begin{align*}
{\rm ch}\big{[}{\rm H}_k(\bar{\mathfrak
u}_-;\tL(\SG,\widehat{\La}^{\mathfrak x}_f(\la)))\big{]} =\sum_{w\in
W^0_k(\mathfrak x)'} {\rm ch}\big{[}
\tL(\ov{\mf{l}},\vartheta(w\circ\La^{\mathfrak x}(\la)))\big{]},
\end{align*}
which is equal to $\omega^{\mathfrak x}\big{(}{\rm ch}\big{[}{\rm
H}_k({\mathfrak u}_-;\tL(\G,\La^{\mathfrak x}(\la)))\big{]}\big{)}$
by \eqref{homology:schur}.
\end{proof}

\begin{cor}\label{aux510}\mbox{}
\begin{itemize}
\item[(1)] The character of ${\rm H}_k(\ov{\mf
u}_-;L(\gl(p+m|q+n),\La^{\mf a}_f(\la)))$, i.e. the trace of the
operator $\prod_{i,j,s,t}x_i^{E_{ii}}\eta_j^{E_{\bar{j},\bar{j}}}
y_s^{E_{ss}}\xi_t^{E_{\bar{t},\bar{t}}}$, is given by
$$
\sum_{w\in W^0_k(\mf{a})'}
\left(\frac{\eta_{-q}\cdots\eta_{-1}}{x_{-p}\cdots
x_{-1}}\right)^{d} hs_{\la^+_w}(\xi,y)
hs_{\la^-_w}(\eta^{-1},x^{-1}).$$

\item[(2)] The character of ${\rm H}_k(\ov{\mf
u}_-;L(\spo(2m|2n+1),\La^{\mf b}_f(\la)))$, i.e. the trace of
$\prod_{i,j}\eta_i^{E_{i}}x_j^{E_{\bar{j}}}$, is
$$
\sum_{w\in W^0_k(\mf{b})'} \left(\frac{\eta_{1}\cdots
\eta_{m}}{x_{1}\cdots x_{n}}\right)^{\frac{d}{2}}
hs_{\la_w}(\eta,x).$$

\item[(3)] The character of ${\rm H}_k(\ov{\mf
u}_-;L(\osp(2m|2n),\La^{\mf c}_f(\la)))$, i.e. the trace of
$\prod_{i,j}\eta_i^{E_{i}}x_j^{E_{\bar{j}}}$, is
$$
\sum_{w\in W^0_k(\mf{c})'} \left(\frac{\eta_{1}\cdots
\eta_{m}}{x_{1}\cdots x_{n}}\right)^{\frac{d}{2}}
hs_{\la_w}(\eta,x).$$

\item[(4)] The character of  ${\rm H}_k(\ov{\mf u}_-;\tL
(\spo(2m|2n),\La^{\mf d}_f(\la)))$, i.e. the trace of
$\prod_{i,j}\eta_i^{E_{i}}x_j^{E_{\bar{j}}}$, is
$$
\begin{cases}
\sum_{w\in W^0_k(\mf{d})'} \left(\dfrac{\eta_{1}\cdots
\eta_{m}}{x_{1}\cdots x_{n}}\right)^{\frac{d}{2}}
hs_{\la_w}(\eta,x), & \text{if $G={\rm
O}(2\ell+1)$}, \\
\sum_{w\in W^0_k(\mf{d})'}\left(\dfrac{\eta_{1}\cdots
\eta_{m}}{x_{1}\cdots x_{n}}\right)^{\frac{d}{2}}\big[
hs_{\la_w}(\eta,x) +hs_{\tilde{\la}_w}(\eta,x)\big], & \text{if
$G={\rm O}(2\ell)$}.
\end{cases}$$
\end{itemize}
\end{cor}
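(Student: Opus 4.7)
\textbf{Proof plan for \corref{aux510}.} The corollary is an essentially immediate unfolding of \thmref{mainthm}, combined with \propref{char:schur} for the $\G$-side and the normalization passage from hatted to unhatted characters. I would present the argument in three steps, handling all four types uniformly.

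\textbf{Step 1: apply \thmref{mainthm} together with \propref{char:schur}.} For each type $\mathfrak x \in \{\mf a, \mf b, \mf c, \mf d\}$ the theorem gives the identity
\begin{equation*}
{\rm ch}\big[{\rm H}_k(\bar{\mathfrak u}_-;\tL(\SG,\widehat{\La}^{\mathfrak x}_f(\la)))\big] = \omega^{\mathfrak x}\Big({\rm ch}\big[{\rm H}_k({\mathfrak u}_-;\tL(\G,\La^{\mathfrak x}(\la)))\big]\Big),
\end{equation*}
where the $\SG$ here denotes the central extension $\hat{\gl}(p+m|q+n)$ (resp.\ $\hspo$, $\hosp$). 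The Kostant formula (\ref{homology:schur}) in \propref{char:schur} rewrites the right-hand side as $\omega^{\mathfrak x}$ applied to a sum of ordinary Schur functions $s_{\la_w^\pm}$ (type $\mf a$) or $s_{\la_w}$ (types $\mf b, \mf c, \mf d$) indexed by $w\in W^0_k(\mathfrak x)$.

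\textbf{Step 2: evaluate $\omega^{\mathfrak x}$ on Schur functions.} Reading off the construction of $\omega^{\mathfrak x}$ from the proofs of Theorems~\ref{superchar:A}, \ref{superchar:OSP}, \ref{superchar:SPO}, \ref{superchar:oddSPO}, it is the composition of the classical involution $\omega$ of the ring of symmetric functions (applied to two appropriate subsets of variables in the type $\mf a$ case, one subset in the ortho-symplectic cases), a renaming of variables matching the Chevalley generators of $\overline{\mf l}$, and a specialization of the remaining infinite collection of variables to zero. Because $\omega$ sends $s_\mu$ to $s_{\mu'}$, the combinatorial identity (\ref{hook:def}) implies that after renaming and truncating, $\omega^{\mathfrak x}(s_{\la_w^\pm})$ becomes the hook Schur polynomial $hs_{\la_w^\pm}$ (or $hs_{\la_w}$) in the finite variable set featuring in the statement, precisely when the specialization does not kill the term. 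The latter condition is equivalent to $\vartheta(w\circ\La^{\mathfrak x}(\la)) \in \mc{P}^{++}_{\overline{\mf l}}$, which is the defining condition of the subset $W^0_k(\mathfrak x)'$. Summing $w$ over $W^0_k(\mathfrak x)'$ therefore produces exactly the terms appearing in the right-hand side of each of (1)--(4), minus the overall normalization factor.

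\textbf{Step 3: untwist the central extension.} The corollary asks for the characters of ${\rm H}_k$ with respect to the unhatted Chevalley generators of $\SG$, whereas \thmref{mainthm} computes them with respect to the hatted generators $\widehat E_{ii} = E_{ii} - {\rm Str}(\mathfrak J E_{ii})K$ of the central extension. Since $K$ acts by the scalar $d$ (type $\mf a$) or $d/2$ (types $\mf b, \mf c, \mf d$) on the whole module, it acts by the same scalar on every homology subquotient. The remark following \thmref{superchar:A} identifies the resulting twist precisely as $\left(\frac{\eta_{-q}\cdots\eta_{-1}}{x_{-p}\cdots x_{-1}}\right)^{d}$ in type $\mf a$ and $\left(\frac{\eta_{1}\cdots\eta_{m}}{x_{1}\cdots x_{n}}\right)^{d/2}$ in the ortho-symplectic types. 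Multiplying the formulas of Step 2 by this factor yields the four claimed identities.

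The only real bookkeeping lies in Step 2 — keeping track of which subsets of variables the involution $\omega$ acts on, how variables are relabeled, and verifying that the truncation to finite variable sets precisely enforces the $(m|n)$-hook condition defining $W^0_k(\mathfrak x)'$. This is not conceptually new; it reuses verbatim the chain of variable manipulations already executed in the character theorems of \secref{sec:char}. No further input is needed beyond the four character theorems, \propref{char:schur}, and the central-extension twist.
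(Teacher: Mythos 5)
Your proposal is correct and reflects exactly how the paper (implicitly) derives the corollary: Theorem~\ref{mainthm} supplies the homology character in terms of the hatted operators, the explicit $\mc{H}_{\la,w}$ expressions in that proof give the hook Schur polynomials indexed by $W^0_k(\mathfrak x)'$, and the normalization factor translating from hatted to unhatted generators is precisely the one recorded in the remark following Theorem~\ref{superchar:A}. The only small caveat worth flagging is a framing one: your Step 2 describes the restriction to $W^0_k(\mathfrak x)'$ as "the specialization not killing the term," which is a true and useful equivalence, but the paper's Theorem~\ref{mainthm} already delivers the sum restricted to $W^0_k(\mathfrak x)'$ via the Casimir-eigenvalue argument, so the corollary does not actually need you to re-derive which terms vanish under specialization.
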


These character formulas of ${\rm H}_k(\ov{\mf u}_-;\tL(\SG,\La^{\mf
x}_f(\la)))$ fit well with those of $\SG$-modules in
\secref{sec:char} by the Euler-Poincar\'e principle.

\thmref{mainthm} and \lemref{aux222} also imply the following
super analogue of \propref{eigenvalue:classical}~(2), which is the
converse of Proposition~\ref{eigenvalue:super}.

\begin{cor}
If $L(\overline{\mathfrak l},\eta)$ with $(\eta
+2\rho_s\vert\eta)_s=(\widehat{\La}^{\mathfrak
x}_f(\la)+2\rho_s\vert\widehat{\La}^{\mathfrak x}_f(\la))_s$ appears
in $\Lambda^k(\overline{\mathfrak u}_-)\otimes
\tL(\SG,\widehat{\La}^{\mathfrak x}_f(\la))$ with multiplicity
$m_\eta$, then $L(\overline{\mathfrak l},\eta)$ is a summand of
${\rm H}_k(\overline{\mathfrak u}_-;\tL(\SG,\widehat{\La}^{\mathfrak
x}_f(\la)))$ with the same multiplicity $m_\eta$.
\end{cor}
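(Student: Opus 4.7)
The plan is to pull the multiplicity question back to the Kac-Moody side via $\vartheta$, where Proposition~\ref{eigenvalue:classical} settles it, and then to reinterpret the answer on the super side using Theorem~\ref{mainthm}.

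First, since $L(\ov{\mf l},\eta)$ appears as an $\ov{\mf l}$-summand of $\Lambda^k(\ov{\mf u}_-)\otimes\tL(\SG,\widehat{\La}^{\mathfrak x}_f(\la))$, the argument immediately following the proof of Lemma~\ref{complete:reducibility} forces $\eta\in\mc{P}^{++}_{\ov{\mf l}}$. Lemma~\ref{aux222} then supplies $w\in W^0_k(\mathfrak x)$ with $\eta=\vartheta(w\circ\La^{\mathfrak x}(\la))$; because $\eta\in\mc{P}^{++}_{\ov{\mf l}}$, such a $w$ belongs to $W^0_k(\mathfrak x)'$ by definition. Lemma~\ref{aux112} next identifies $m_\eta$ with the multiplicity of $L(\mf l,w\circ\La^{\mathfrak x}(\la))$ inside $\Lambda^k\mf u_-\otimes\tL(\G,\La^{\mathfrak x}(\la))$.

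Second, the $W$-invariance of $(\cdot|\cdot)_c$ gives $(w\circ\La^{\mathfrak x}(\la)+2\rho_c|w\circ\La^{\mathfrak x}(\la))_c=(\La^{\mathfrak x}(\la)+2\rho_c|\La^{\mathfrak x}(\la))_c$. Proposition~\ref{eigenvalue:classical}~(1), combined with the Kostant homology formula for integrable Kac-Moody modules used in the proof of Proposition~\ref{char:schur}, ensures that every weight-matching occurrence of $L(\mf l,w\circ\La^{\mathfrak x}(\la))$ inside the chain complex survives to ${\rm H}_k(\mf u_-;\tL(\G,\La^{\mathfrak x}(\la)))$; hence $L(\mf l,w\circ\La^{\mathfrak x}(\la))$ is a summand of the Kac-Moody homology with multiplicity exactly $m_\eta$, namely the number of $w'\in W^0_k(\mathfrak x)$ satisfying $w'\circ\La^{\mathfrak x}(\la)=w\circ\La^{\mathfrak x}(\la)$ (and its obvious $\tL$-counterpart when $G={\rm O}(2\ell)$).

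Finally, Theorem~\ref{mainthm} decomposes ${\rm H}_k(\ov{\mf u}_-;\tL(\SG,\widehat{\La}^{\mathfrak x}_f(\la)))$ as $\bigoplus_{w'\in W^0_k(\mathfrak x)'}\tL(\ov{\mf l},\vartheta(w'\circ\La^{\mathfrak x}(\la)))$. The multiplicity of $L(\ov{\mf l},\eta)$ in this direct sum equals the number of $w'\in W^0_k(\mathfrak x)'$ with $\vartheta(w'\circ\La^{\mathfrak x}(\la))=\eta$, and by the injectivity of $\vartheta$ this coincides with the count from the previous paragraph, i.e., $m_\eta$. The main subtlety requiring care is the $G={\rm O}(2\ell)$ case, where $\tL$ couples $\la$ with $\tilde\la$ symmetrically on both sides of $\vartheta$; the compatibility $\vartheta(\La^{\mf d}(\tilde\la))=\widehat{\La}^{\mf d}_f(\tilde\la)$ already built into the definition of $\vartheta$ guarantees that the bundled multiplicities are matched faithfully across the equivalence.
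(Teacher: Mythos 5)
Your argument is correct and is essentially the route the paper intends: it explicitly unwinds the chain ``Lemma~\ref{aux222} to find $w$, Lemma~\ref{aux112} to transfer the chain-complex multiplicity to the Kac--Moody side, Proposition~\ref{eigenvalue:classical}(1) and the Kostant formula to match the homology multiplicity there, and Theorem~\ref{mainthm} plus injectivity of $\vartheta$ to transfer back.'' The paper merely remarks that the corollary follows from Theorem~\ref{mainthm} and Lemma~\ref{aux222}; your write-up supplies exactly the intermediate steps that remark suppresses, including the needed care with the $\tL$-bundling when $G={\rm O}(2\ell)$.
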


\begin{rem}
Specializing to $q=n=0$ in \corref{aux510} (1), (3) and (4) we obtain Kostant type
homology formulas of the unitarizable highest weight modules of the Lie groups ${\rm
SU}(p,m)$, ${\rm SO}^*(2m)$, and the double cover of ${\rm Sp}(2m,\mathbb R)$. For
unitarizable highest weight modules such homology groups were first computed by Enright
\cite[Theorem 2.2]{En}, and they involve a complicated subset of the corresponding finite
Weyl group \cite[Definition 3.6]{DES}. \corref{aux510} provides an alternative
description involving an infinite Weyl group. Ngau Lam has informed us that both forms of the formula are equivalent [HLT].

\end{rem}

\begin{rem}
Although all the discussions above dealt with ${\mf u}_-$-homology
and $\ov{\mf u}_-$-homology groups, our calculations also give
formulas for the corresponding (restricted) ${\mf u}_+$-cohomology
and $\ov{\mf u}_+$-cohomology groups (see \cite[Lemma 9]{L}).
\end{rem}

\section{Homology formulas for oscillator modules at negative levels}\label{sec:negative}

In this section, we shall compute the character formulas and the
${\mathfrak u}_-$-homology groups of various modules of $\mf{b}^{\mf
0}_\infty$, $\mf{c}_\infty$ and ${\mf d}_\infty$ at negative levels.
As the approach is parallel to that of the earlier sections, the
presentation here will be rather sketchy.
\subsection{The character formulas} We fix a positive integer $\ell\geq 1$. Consider
$\ell$ pairs of free bosons
$$\gamma^{\pm,i}(z)=\sum_{r\in\frac{1}{2}+\Z}\gamma^{\pm,i}_rz^{-r-1/2}$$
with $i=1,\ldots,\ell$. Let $\F^{-\ell}$ denote the corresponding
Fock space generated by the vaccum vector $|0\rangle$, which is
annihilated by $\gamma^{\pm,i}_r$ for $r>0$. We also denote by
$\F^{-\ell+\hf}$ the tensor product of $\F^{-\ell}$ and $\F^{\hf}$.

\subsubsection{The case of ${\mf d}_\infty$}

Suppose that $\G=\mf{d}_{\infty}$ and $d$ is even. By \cite[Theorem
5.2]{Wa} there exists a commuting action of ${\mf d}_\infty$ and
${\rm Sp}(d)$ on $\F^{-\frac{d}{2}}$. Furthermore, under this joint
action, we have
\begin{equation} \label{eq:dual-negative d}
\F^{-\frac{d}{2}}\cong\bigoplus_{\la\in {\mc P}({\rm Sp}(d))} L({\mf
d}_\infty,\La^{\mf d}_-(\la))\otimes V_{{\rm Sp}(d)}^\la,
\end{equation}
where $\La^{\mf d}_-(\la):=-d\La^{\mf
d}_0+\sum_{k=1}^{\frac{d}{2}}\la_k\epsilon_k$.

Computing the trace of the operator $\prod_{n\in
\N}x_n^{\widetilde{E}_n}\prod_{i=1}^\frac{d}{2} z_i^{{e}_{i}}$ on
both sides of \eqnref{eq:dual-negative d}, we obtain the following
identity:
\begin{equation}\label{combid-classical-negative d}
\prod_{i=1}^\frac{d}{2}\prod_{n\in\N}\frac{1}{(1-x_nz_i)(1-x_{n}z^{-1}_{i})}=\sum_{\la\in
{\mc P}({\rm Sp}(d))} {\rm ch}L(\mf{d}_{\infty},\Lambda^{\mf
d}_-(\la)){\rm ch}V^\la_{{\rm Sp}(d)}.
\end{equation}

By \eqnref{combid-classical-c} and similar arguments as in Theorem
\ref{superchar:A}-\ref{superchar:SPO}, we obtain the following.
\begin{thm}\label{char:negative d}
For $\la\in\mc{P}({\rm Sp}(d))$, we have
\begin{equation*}
{\rm ch}L({\mf d}_{\infty},\La^{\mf d}_-(\la))
=\frac{\sum_{k=0}^{\infty}\sum_{w\in
W^0_k(\mf{c})}(-1)^ks_{(\la_w)'}(x_1,x_2,\ldots)}{\prod_{i<j}(1-x_ix_j)}.
\end{equation*}
\end{thm}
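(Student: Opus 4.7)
The plan is to derive the character formula by applying the standard involution $\omega$ on symmetric functions to the $(\mathfrak{c}_\infty, {\rm Sp}(d))$-duality identity, and then comparing with the trace identity \eqref{combid-classical-negative d} coming from the $(\mathfrak{d}_\infty,{\rm Sp}(d))$-duality at negative level $-d$.

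First I would note that the two left-hand sides match under $\omega$. On the generating-series side, the key identity is
\[
\omega\left(\prod_{n\in\N}(1+x_n t)\right)=\prod_{n\in\N}(1-x_n t)^{-1},
\]
since $\omega$ exchanges elementary and complete symmetric functions. Applied (with $t=z_i^{\pm 1}$) to each factor, this transforms the left-hand side of \eqref{combid-classical-c} into the left-hand side of \eqref{combid-classical-negative d}. Thus
\[
\sum_{\la\in\mc{P}({\rm Sp}(d))}\omega\!\left({\rm ch}\,L(\mathfrak{c}_\infty,\La^{\mf c}(\la))\right)\,{\rm ch}\,V^\la_{{\rm Sp}(d)}
=\sum_{\la\in\mc{P}({\rm Sp}(d))}{\rm ch}\,L(\mathfrak{d}_\infty,\La^{\mf d}_-(\la))\,{\rm ch}\,V^\la_{{\rm Sp}(d)}.
\]
By the linear independence of $\{{\rm ch}\,V^\la_{{\rm Sp}(d)}\mid \la\in\mc{P}({\rm Sp}(d))\}$, the result reduces to computing $\omega({\rm ch}\,L(\mathfrak{c}_\infty,\La^{\mf c}(\la)))$.

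Next I would plug in \propref{char:schur} for type $\mf c$, giving
\[
{\rm ch}\,L(\mathfrak{c}_\infty,\La^{\mf c}(\la))=\frac{\sum_{k\ge 0}(-1)^k \sum_{w\in W^0_k(\mf c)} s_{\la_w}(x_1,x_2,\ldots)}{\prod_{i\le j}(1-x_ix_j)},
\]
and apply $\omega$ term by term. Since $\omega$ is a ring homomorphism and $\omega(s_\mu)=s_{\mu'}$, the numerator becomes $\sum_{k,w}(-1)^k s_{(\la_w)'}$. For the denominator, I would use the Littlewood identities
\[
\prod_{i\le j}(1-x_ix_j)^{-1}=\sum_{\mu \text{ even-row}}s_\mu(x),\qquad \prod_{i<j}(1-x_ix_j)^{-1}=\sum_{\mu \text{ even-column}}s_\mu(x),
\]
which are interchanged by $\omega$ (since ``even-row'' and ``even-column'' partitions correspond under transposition). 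Hence $\omega\!\left(\prod_{i\le j}(1-x_ix_j)^{-1}\right)=\prod_{i<j}(1-x_ix_j)^{-1}$, giving exactly the denominator $\prod_{i<j}(1-x_ix_j)$ claimed in the theorem.

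Putting these pieces together yields the stated formula. The only subtlety, and the main thing I would need to check carefully, is the validity of applying $\omega$ to the infinite product $\prod_{i\le j}(1-x_ix_j)^{-1}$: one must interpret it in the ring of symmetric functions of bounded degree (via the Littlewood expansion above) so that $\omega$ makes sense and the computation is genuinely a ring-theoretic identity rather than a naive factorwise substitution. Once this is justified, the rest is bookkeeping parallel to the proofs of Theorems~\ref{superchar:A}--\ref{superchar:SPO}.
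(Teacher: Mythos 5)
Your proof is correct and follows essentially the same route as the paper: apply $\omega$ in the $x$-variables to the trace identity \eqnref{combid-classical-c}, observe it matches the left-hand side of \eqnref{combid-classical-negative d}, and extract $\omega\bigl({\rm ch}\,L(\mathfrak{c}_\infty,\La^{\mf c}(\la))\bigr)={\rm ch}\,L(\mathfrak{d}_\infty,\La^{\mf d}_-(\la))$ by linear independence of $\{{\rm ch}\,V^\la_{{\rm Sp}(d)}\}$, then push $\omega$ through the formula of \propref{char:schur}. The paper compresses this to ``similar arguments as in Theorems~\ref{superchar:A}--\ref{superchar:SPO}'' and records the key observation in the remark following the theorem; your explicit Littlewood-identity computation showing $\omega\bigl(\prod_{i\le j}(1-x_ix_j)^{-1}\bigr)=\prod_{i<j}(1-x_ix_j)^{-1}$, together with the remark on interpreting $\omega$ degree-by-degree in the completed ring, fills in details that the paper leaves to the reader.
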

The involution $\omega$ on the ring of symmetric functions in
$x_1,x_2,\ldots$ maps the left-hand side of
\eqnref{combid-classical-c} to that of
\eqnref{combid-classical-negative d}. Thus it follows that $\omega
\left({\rm ch}L(\mf{c}_{\infty},\La^{\mf c}(\la))\right)={\rm
ch}L(\mf{d}_{\infty},\La^{\mf d}_-(\la))$.

\subsubsection{The case of ${\mf c}_\infty$}

By \cite[Theorem 5.3 and 6.2]{Wa} there exists a commuting action of
${\mf c}_\infty$ and ${\rm O}(d)$ on $\F^{-\frac{d}{2}}$.
Furthermore, under this joint action, we have
\begin{equation} \label{eq:dual-negative c}
\F^{-\frac{d}{2}}\cong\bigoplus_{\la\in {\mc P}({\rm O}(d))} L({\mf
c}_\infty,\La^{\mf c}_-(\la))\otimes V_{{\rm O}(d)}^\la,
\end{equation}
where $\La^{\mf c}_-(\la):=-\frac{d}{2}\La^{\mf
c}_0+\sum_{k=1}^{d}\la_k\epsilon_k$.

If $d=2\ell$, then the calculation of the trace of the operator
$\prod_{n\in \N}x_n^{\widetilde{E}_n}\prod_{i=1}^\ell z_i^{{e}_{i}}$
on both sides of \eqnref{eq:dual-negative c} gives the following
identity
\begin{equation}\label{combid-classical-negative c1}
\prod_{i=1}^\ell\prod_{n\in\N}\frac{1}{(1-x_nz_i)(1-x_{n}z^{-1}_{i})}=\sum_{\la\in
{\mc P}({\rm O}(2\ell))} {\rm
ch}L(\mathfrak{c}_{\infty},\Lambda^{\mf c}_-(\la)){\rm
ch}V^\la_{{\rm O}(2\ell)}.
\end{equation}

If $d=2\ell+1$, then the trace of the operator $\prod_{n\in
\N}x_n^{\widetilde{E}_n}\prod_{i=1}^\ell z_i^{{e}_{i}}(-I_d)$  gives
\begin{equation}\label{combid-classical-negative c2}
\prod_{i=1}^\ell\prod_{n\in\N}\frac{1}{(1-\epsilon x_nz_i)(1-
\epsilon x_{n}z^{-1}_{i})(1-\epsilon x_n)}=\sum_{\la\in {\mc P}({\rm
O}(2\ell+1))} {\rm ch}L(\mathfrak{c}_{\infty},\Lambda^{\mf
c}_-(\la)){\rm ch}V^\la_{{\rm O}(2\ell+1)}.
\end{equation}

Applying the same arguments  as in \thmref{char:negative d} to
\eqnref{combid-classical-d1} and \eqnref{combid-classical-d2}, we
obtain the following.
\begin{thm}\label{char:negative c}
Let $\la\in\mc{P}({\rm O}(d))$ be given.
\begin{itemize}
\item[(1)] If $d=2\ell+1$, then we have
\begin{equation*}
{\rm ch}L({\mf c}_{\infty},\La^{\mf c}_-(\la))
=\frac{\sum_{k=0}^{\infty}\sum_{w\in
W^0_k(\mf{d})}(-1)^ks_{(\la_w)'}(x_1,x_2,\ldots)}{\prod_{i\le
j}(1-x_ix_j)}.
\end{equation*}

\item[(2)] If $d=2\ell$, then we have
\begin{equation*}
\begin{split}
&{\rm ch}L({\mf c}_{\infty},\La^{\mf c}_-(\la))+{\rm ch}L({\mf
c}_{\infty},\La^{\mf c}_-(\tilde{\la})) \\
&=\frac{\sum_{k=0}^{\infty}\sum_{w\in
W^0_k(\mf{d})}(-1)^k\big[s_{(\la_w)'}(x_1,x_2,\ldots)+s_{(\tilde{\la}_w)'}(x_1,x_2,\ldots)\big]}{\prod_{i\le
j}(1-x_ix_j)}.
\end{split}
\end{equation*}

\end{itemize}
\end{thm}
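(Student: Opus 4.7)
The plan is to follow exactly the $\omega$-transfer strategy used for \thmref{char:negative d}. Let $\omega$ denote the standard involution on the ring of symmetric functions in $x_1, x_2, \ldots$, extended to act trivially on $z_1, \ldots, z_\ell$ (and on the scalar $\epsilon$). Since $\omega$ swaps $e_k$ with $h_k$, for any auxiliary variable $u$ we have
\begin{equation*}
\omega\!\left(\prod_n (1+\epsilon\, x_n u)\right) \,=\, \prod_n (1-\epsilon\, x_n u)^{-1}.
\end{equation*}
Applied variable-by-variable, this sends the LHS of \eqnref{combid-classical-d2} to the LHS of \eqnref{combid-classical-negative c2}, and the LHS of \eqnref{combid-classical-d1} to that of \eqnref{combid-classical-negative c1}. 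Thus $\omega$ converts each $(\mf{d}_\infty,{\rm O}(d))$-duality identity into the corresponding $(\mf{c}_\infty,{\rm O}(d))$-negative-level identity.

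For part (1), apply $\omega$ to \eqnref{combid-classical-d2}. Since $\{{\rm ch}V^\la_{{\rm O}(2\ell+1)}\}_{\la\in\mc{P}({\rm O}(2\ell+1))}$ is linearly independent, a term-by-term comparison with \eqnref{combid-classical-negative c2} gives $\omega({\rm ch}L(\mf{d}_\infty,\La^{\mf d}(\la))) = {\rm ch}L(\mf{c}_\infty,\La^{\mf c}_-(\la))$ for each such $\la$. Substituting the formula of \propref{char:schur} for type $\mf d$ into the left-hand side, and using $\omega(s_\mu)=s_{\mu'}$ together with the identity $\omega(\prod_{i<j}(1-x_ix_j)^{-1}) = \prod_{i\le j}(1-x_ix_j)^{-1}$ (which follows from the standard expansions $\prod_{i<j}(1-x_ix_j)^{-1}=\sum_\mu s_\mu$ over partitions with all column lengths even, and $\prod_{i\le j}(1-x_ix_j)^{-1}=\sum_\mu s_\mu$ over partitions with all row lengths even, these two families being $\omega$-conjugate), produces the claimed character formula.

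For part (2), apply $\omega$ to \eqnref{combid-classical-d1}. The new feature is that ${\rm ch}V^\la_{{\rm O}(2\ell)}={\rm ch}V^{\tilde\la}_{{\rm O}(2\ell)}$, so these characters are not linearly independent as $\la$ ranges over $\mc{P}({\rm O}(2\ell))$; equality of coefficients can only be asserted after grouping each $\la$ with its partner $\tilde\la$. This yields
\begin{equation*}
\omega\!\left({\rm ch}L(\mf{d}_\infty,\La^{\mf d}(\la))+{\rm ch}L(\mf{d}_\infty,\La^{\mf d}(\tilde\la))\right) = {\rm ch}L(\mf{c}_\infty,\La^{\mf c}_-(\la))+{\rm ch}L(\mf{c}_\infty,\La^{\mf c}_-(\tilde\la)),
\end{equation*}
and the substitution of \propref{char:schur} for type $\mf d$, exactly as in (1), then delivers the stated symmetric-sum formula. (Of course, if $\la=\tilde\la$, this reduces to an identity for a single module, as noted earlier in the paper.)

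The main point of care is verifying the $\omega$-image of the denominator, i.e. that $\omega(\prod_{i<j}(1-x_ix_j))=\prod_{i\le j}(1-x_ix_j)$, since this is what distinguishes the type $\mf c$ denominator from the type $\mf d$ one in \propref{char:schur}; the rest of the argument is a direct symmetric-function manipulation analogous to the $\mf c_\infty\leftrightarrow\mf d_\infty$ transfer used in the proof of \thmref{char:negative d}.
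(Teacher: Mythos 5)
Your proof is correct and follows essentially the same route the paper takes: applying the involution $\omega$ to the $(\mf{d}_\infty,{\rm O}(d))$-duality identities \eqref{combid-classical-d1}--\eqref{combid-classical-d2}, matching against \eqref{combid-classical-negative c1}--\eqref{combid-classical-negative c2}, and then pushing $\omega$ through the type-$\mf d$ formula of \propref{char:schur}. You are also right to flag the Littlewood identity $\omega\bigl(\prod_{i<j}(1-x_ix_j)\bigr)=\prod_{i\le j}(1-x_ix_j)$ and the grouping of $\la$ with $\tilde\la$ when $d=2\ell$, which is exactly the subtlety the paper handles via the notation $\tL$ and its remark about $\la=\tilde\la$.
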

Note that $\omega$ maps \eqnref{combid-classical-d1} (resp.
\eqnref{combid-classical-d2}) to \eqnref{combid-classical-negative
c1} (resp. \eqnref{combid-classical-negative c2}).

\subsubsection{The case of $\mf{b}^{\mf 0}_\infty$}
Let $d$ be even. Recall the duality from \thmref{b'pin:duality}.
Computing the trace of the operator $\prod_{n\in
\N}x_n^{\widetilde{E}_n}\prod_{i=1}^{\frac{d}{2}} z_i^{{e}_{i}}$ on
both sides of \eqnref{eq:dual-negative b} gives the following
identity
\begin{equation}\label{combid-classical-negative b}
\prod_{i=1}^{\frac{d}{2}}\prod_{n\in\N}\frac{(z_i^\hf+z_i^{-\hf})}{(1-x_nz_i)(1-x_{n}z^{-1}_{i})}=\sum_{\la\in
{\mc P}({\rm Pin}(d))} {\rm ch}L(\mf{b}^{\mf
0}_{\infty},\Lambda^{\mf{b}^{\mf 0}}_-(\la)){\rm ch}V^\la_{{\rm
Pin}(d)}.
\end{equation}

By \eqnref{combid-classical-b} and similar arguments as in the proof
of \thmref{char:negative d} we obtain the following.

\begin{thm}\label{char:negative b}
For $\la\in\mc{P}({\rm Pin}(d))$, we have
\begin{equation*}
{\rm ch}L(\mf{b}^{\mf 0}_{\infty},\La^{\mf{b}^{\mf 0}}_-(\la))
=\frac{\sum_{k=0}^{\infty}\sum_{w\in
W^0_k(\mf{b})}(-1)^ks_{(\la_w)'}(x_1,x_2,\ldots)}{\prod_{i}(1+x_i)^{-1}\prod_{i\le
j}(1-x_ix_j)}.
\end{equation*}
\end{thm}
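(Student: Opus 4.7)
The plan is to adapt the argument used for \thmref{char:negative d} and \thmref{char:negative c}. The key observation is that applying the symmetric-function involution $\omega$ (acting on $x_1, x_2, \ldots$ and trivially on the $z_i$) to the positive-level duality identity \eqnref{combid-classical-b} will produce exactly the negative-level duality identity \eqnref{combid-classical-negative b}. This would give ${\rm ch}L(\mf b^{\mf 0}_\infty, \La^{\mf{b}^{\mf 0}}_-(\la)) = \omega\bigl({\rm ch}L(\mf b_\infty, \La^{\mf b}(\la))\bigr)$, from which the claimed formula follows by substituting \propref{char:schur}.

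First I would substitute the closed form for ${\rm ch}L(\mf b_\infty, \La^{\mf b}(\la))$ from \propref{char:schur} into \eqnref{combid-classical-b}, so that both sides become explicit expressions in the symmetric functions of $x_1, x_2, \ldots$ On the left-hand side, the identity $\omega(e_k) = h_k$ applied to the generating-function factors yields
$$\omega\Bigl(\prod_{n\in\N}(1+x_nz_i)(1+x_nz_i^{-1})\Bigr) = \prod_{n\in\N}\frac{1}{(1-x_nz_i)(1-x_nz_i^{-1})},$$
while the factor $z_i^{1/2}+z_i^{-1/2}$ is fixed by $\omega$; so the LHS of \eqnref{combid-classical-b} is sent to the LHS of \eqnref{combid-classical-negative b}. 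On the right-hand side, $\omega(s_{\la_w}) = s_{(\la_w)'}$ handles the numerator, and I would check that $\omega$ sends the denominator $D^{\mf b} = \prod_i(1-x_i)\prod_{i<j}(1-x_ix_j)$ to $\prod_i(1+x_i)^{-1}\prod_{i\le j}(1-x_ix_j)$, which is precisely the denominator in the stated formula. The factors ${\rm ch}V^\la_{\mathrm{Pin}(d)}$ are untouched. Equating the transformed identity with \eqnref{combid-classical-negative b} and invoking the linear independence of $\{{\rm ch}V^\la_{\mathrm{Pin}(d)} \mid \la\in\mc P(\mathrm{Pin}(d))\}$ would then yield the desired character formula.

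The only nontrivial step is verifying $\omega\bigl(\prod_{i<j}(1-x_ix_j)\bigr) = \prod_{i\le j}(1-x_ix_j)$, which I would handle by passing to power sums. Writing
$$\log \prod_{i<j}(1-x_ix_j) = -\sum_{k\ge 1}\frac{1}{2k}(p_k^2-p_{2k})$$
and applying $\omega(p_k) = (-1)^{k-1}p_k$ gives $-\sum_{k\ge 1}\frac{1}{2k}(p_k^2+p_{2k}) = \log\prod_{i\le j}(1-x_ix_j)$. The companion identity $\omega\bigl(\prod_i(1-x_i)\bigr) = \prod_i(1+x_i)^{-1}$ is immediate from $\omega(e_k)=h_k$. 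With these in hand, the remaining steps are purely formal and parallel those in \thmref{char:negative d} and \thmref{char:negative c}.
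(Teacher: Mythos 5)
Your proposal is correct and follows the same approach the paper indicates: apply the involution $\omega$ to the $(\mf b_\infty, {\rm Pin}(d))$-duality identity \eqnref{combid-classical-b}, match it against \eqnref{combid-classical-negative b}, use linear independence of the ${\rm Pin}(d)$-characters to conclude $\omega\bigl({\rm ch}\,L(\mf b_\infty,\La^{\mf b}(\la))\bigr)={\rm ch}\,L(\mf{b}^{\mf 0}_\infty,\La^{\mf{b}^{\mf 0}}_-(\la))$, and then push $\omega$ through the closed formula from \propref{char:schur}. Your power-sum verification of $\omega\bigl(\prod_{i<j}(1-x_ix_j)\bigr)=\prod_{i\le j}(1-x_ix_j)$ and $\omega\bigl(\prod_i(1-x_i)\bigr)=\prod_i(1+x_i)^{-1}$ correctly fills in the detail the paper leaves to "similar arguments."
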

Now $\omega$ maps the left hand side of \eqnref{combid-classical-b}
to that of \eqnref{combid-classical-negative b}. Thus it follows
that $\omega \left({\rm ch}L(\mf{b}_{\infty},\La^{\mf
b}(\la))\right)={\rm ch}L(\mf{b}^{\mf 0}_{\infty},\La^{\mf{b}^{\mf
0}}_-(\la))$.\medskip

\subsection{Formulas for the ${\mathfrak u}_-$-homology groups}

Recall $\mc{P}_{{\mf l},c}^+$ and $\mc{P}_{{\mf l}}^+$ from
\secref{sec:Plc} and Appendix \ref{b':algebra}, which we shall now
denote by $\mc{P}_{{\mf l},c}^{+,\mf{x}}$ and $\mc{P}_{{\mf
l}}^{+,\mf{x}}$ to keep track that $\mf{l}$ is a subalgebra of
${\mathfrak x}_{\infty}$ for ${\mathfrak x}\in\{\mf{b, c,d}\}$.

Let $\mu=c\La^{\mf c}_0+\sum_{i\geq
1}\mu_i\epsilon_i\in\mc{P}_{{\mf l},c}^{+,\mf{c}}$ be given.
Define a bijective map  $\vartheta : \mc{P}_{{\mf l},c}^{+,\mf{c}}
\longrightarrow \mc{P}_{{\mf l},-2c}^{+,\mf{d}}$
\begin{equation}\label{vartheta-negative d}
\vartheta(\mu):=-2c\La^{\mf d}_0+\sum_{i\geq 1}\mu'_i\epsilon_i.
\end{equation}
In particular, we have $\vartheta(\La^{\mf c}_\pm(\la))=\La^{\mf
d}_\mp(\la)$, for $\la\in\mc{P}({\rm Sp}(d))$, where it is
understood that $\La^{\mathfrak x}_+(\la)=\La^{\mathfrak x}(\la)$
for ${\mathfrak x}\in\{\mf{c,d}\}$.

For $\mu=2c\La^{\mf b}_0+\sum_{i\ge
1}\mu_i\epsilon_i\in\mc{P}^{+,\mf b}_{\mf{l},2c}$, we define
$\vartheta : \mc{P}_{{\mf l},2c}^{+,\mf{b}} \longrightarrow
\mc{P}_{{\mf l},-c}^{+,\mf{b}^{\mf 0}}$
\begin{equation}\label{vartheta-negative b}
\vartheta(\mu):=-c\La_0^{\mf{b}^{\mf 0}}+\sum_{i\geq
1}\mu'_i\epsilon_i.
\end{equation}
We have $\vartheta(\La^{\mf b}(\la))=\La^{\mf{b}^{\mf 0}}_{-}(\la)$.

For $\la\in\mc{P}(G)$ and $w\in W^0$, we define
$\tL(\G,\La^{\mathfrak x}_-(\la))$ and $\tL(\mf{l},\vartheta^{\pm
1}(w\circ\La^{\mathfrak x}_+(\la)))$ similarly as in \eqnref{def:L}.
Then it is easy to see that $\omega ({\rm
ch}\tL(\mf{c}_{\infty},\La^{\mf c}_\pm(\la)))={\rm
ch}\tL(\mf{d}_{\infty},\La^{\mf d}_\mp(\la))$, and
\begin{equation*}
\omega\big{(}{\rm ch}\big{[}\Lambda^k\mathfrak u_-\otimes
\tL(\mf{c}_{\infty},\La^{\mf c}_\pm(\la))\big{]}\big{)}={\rm
ch}\big{[}\Lambda^k{{\mathfrak u}}_-\otimes
\tL(\mf{d}_{\infty},\La^{\mf d}_\mp(\la))\big{]}.
\end{equation*}

Using analogous arguments as in Lemma~\ref{aux112}, we can check
that for $\mu\in \mc{P}^{+,\mf{c}}_{\mf l}$, $L(\mf{l},\mu)$ is a
component in $\Lambda^k\mathfrak u_-\otimes
\tL(\mf{c}_\infty,\La^{\mf c}_\pm(\la))$ if and only if ${
L}(\mf{l},\vartheta(\mu))$ is a component in $\Lambda^k\mathfrak
u_-\otimes \tL(\mf{d}_\infty,\La_\mp^{\mf d}(\la))$ with the same
multiplicity, while $\omega({\rm ch}L(\mf{l},\mu))={\rm
ch}L(\mf{l},\vartheta(\mu))$. We have a similar correspondence
between $\mf{l}$-modules inside $\Lambda^k\mathfrak u_-\otimes
L(\mf{b}_\infty,\La^{\mf b}(\la))$ and $\Lambda^k\mathfrak
u_-\otimes L(\mf{b}^{\mf 0}_\infty,\La_-^{\mf{b}^{\mf 0}}(\la))$.

\begin{lem}\label{aux113-negative d}
For $\mu \in\mc{P}_{\mf l}^{+,\mf{x}}$, we have
$$(\mu+2\rho_c|\mu)_c=-(\vartheta(\mu)+2\rho_c|\vartheta(\mu))_c,\quad \mf{x}\in\{\mf{b,c}\}.$$ In
particular, we have
\begin{itemize}
\item[(1)] $(\La^{\mf c}_\pm(\la)+2\rho_c|\La^{\mf
c}_\pm(\la))_c=(\mu+2\rho_c|\mu)_c$ if and only if $(\La^{\mf
d}_\mp(\la)+2\rho_c|\La^{\mf
d}_\mp(\la))_c=(\vartheta(\mu)+2\rho_c|\vartheta(\mu))_c$.
\item[(2)] $(\La^{\mf b}(\la)+2\rho_c|\La^{\mf
b}(\la))_c=(\mu+2\rho_c|\mu)_c$ if and only if $(\La_-^{\mf{b}^{\mf
0}}(\la)+2\rho_c|\La_-^{\mf{b}^{\mf
0}}(\la))_c=(\vartheta(\mu)+2\rho_c|\vartheta(\mu))_c$.
\end{itemize}
\end{lem}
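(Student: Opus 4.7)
The plan is to prove the main identity $(\mu+2\rho_c|\mu)_c=-(\vartheta(\mu)+2\rho_c|\vartheta(\mu))_c$ by a direct expansion of both sides in the explicit bilinear forms and Weyl vectors of the relevant infinite rank algebras, with the Macdonald-type identity \eqref{macd} serving as the bridge between the partition parts. Parts (1) and (2) of the ``In particular'' will then be immediate consequences by specializing to $\mu=\La^{\mf c}_\pm(\la)$ (resp.\ $\mu=\La^{\mf b}(\la)$) and using the already noted identifications $\vartheta(\La^{\mf c}_\pm(\la))=\La^{\mf d}_\mp(\la)$ and $\vartheta(\La^{\mf b}(\la))=\La^{\mf{b}^{\mf 0}}_-(\la)$.

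For the case $\mf x=\mf c$, I would write $\mu=c\La_0^{\mf c}+\sum_{i\geq 1}\mu_i\epsilon_i$ and expand $(\mu+2\rho_c|\mu)_c$ using the standard bilinear form on $\mf{c}_\infty$, where $(\epsilon_i|\epsilon_j)_c=\delta_{ij}$; the result takes the shape $\sum_{i\geq 1}\mu_i(\mu_i-2i)+c\,A(\mu)$ for a linear correction $A(\mu)$ read off directly from $(\La_0^{\mf c}|\epsilon_i)_c$ and $(\rho_c|\La_0^{\mf c})_c$. The parallel expansion of $(\vartheta(\mu)+2\rho_c|\vartheta(\mu))_c$ on $\mf{d}_\infty$ for $\vartheta(\mu)=-2c\La_0^{\mf d}+\sum_{i\geq 1}\mu'_i\epsilon_i$ yields $\sum_{i\geq 1}\mu'_i(\mu'_i-2(i-1))+(-2c)\,B(\mu)$. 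The identity \eqref{macd} flips the sign of the partition part, so what remains to verify is the compatibility $c\,A(\mu)=-(-2c)\,B(\mu)$, a direct computation from the four normalizations of $\La_0^{\mf c}$, $\La_0^{\mf d}$, $\rho_c^{\mf c}$, and $\rho_c^{\mf d}$. The case $\mf x=\mf b$ follows the same blueprint, now comparing the bilinear forms on $\mf{b}_\infty$ and $\mf{b}^{\mf 0}_\infty$, with central rescaling $2c\mapsto -c$ in place of $c\mapsto -2c$.

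The main obstacle I anticipate is the central-charge bookkeeping. The super analogue \lemref{aux113-cd} had to absorb a genuine mismatch into an extra constant $\ov{C}$, but here the claim is an exact sign flip with no correction term, so the normalizations of the four bilinear forms and the specific central-charge rescalings $c\mapsto -2c$ and $2c\mapsto -c$ built into $\vartheta$ in \eqref{vartheta-negative d} and \eqref{vartheta-negative b} must conspire precisely. The partition-conjugation via \eqref{macd} is routine at this point, having already been the key step in \lemref{aux113} and \lemref{aux113-cd}; the real care is in tracking the constants correctly to see that no residual $\ov{C}$-type term survives.
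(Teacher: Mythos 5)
Your plan is correct and follows essentially the same route as the paper's proof: direct expansion of both quadratic forms using the normalizations of $(\cdot|\cdot)_c$, $\rho_c$, and $\La_0$ for each of the four algebras, with the Macdonald identity \eqref{macd} flipping the sign of the partition part and the rescalings $c\mapsto -2c$ (resp. $2c\mapsto -c$) built into $\vartheta$ arranging the exact cancellation of the central-charge contribution. The one concrete computation you defer — that $-2c|\mu^\circ|$ from the $\mf{c}_\infty$ side matches $-(+2c|\mu^\circ|)$ from the $\mf{d}_\infty$ side, and the analogous check for $\mf{b}_\infty$ versus $\mf{b}^{\mf 0}_\infty$ — goes through exactly as you anticipate, so there is no residual $\ov C$ term.
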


{
\begin{proof} Let $\mu=c\La^{\mf x}_0+\sum_{i\geq 1}\mu_i\epsilon_i$. Consider first the case of $\mf{x}=\mf{c}$. We have
\begin{align*}
(\mu+2\rho_c\vert\mu)_c  &= (\sum_{i\geq
1}\mu_i\epsilon_i+2\rho_c\vert\sum_{i\geq 1}\mu_i\epsilon_i)_{c}
+2c(\La^{\mf c}_0\vert\sum_{i\geq 1}\mu_i\epsilon_i)_c\allowdisplaybreaks \\
&=\sum_{i\geq 1}\mu_i(\mu_i-2i) -2c|\mu^\circ|
=(\mu^{\circ}+\rho_1|\mu^{\circ})_1 -2c|\mu^\circ|,
\end{align*}
where $\mu^\circ=(\mu_1,\mu_2,\ldots)$. On the other hand, we have
\begin{align*}
(\vartheta(\mu)+2\rho_c|\vartheta(\mu))_c &=(\sum_{i\geq
1}\mu'_i\epsilon_i+2\rho_c\vert\sum_{i\geq 1}\mu'_i\epsilon_i)_{c}
-4c(\La^{\mf d}_0\vert\sum_{i\geq 1}\mu'_i\epsilon_i)_c\allowdisplaybreaks \\
&=\sum_{i\geq 1} \mu'_i(\mu'_i-2(i-1))+ 2c|\mu^\circ|\allowdisplaybreaks \\
&=-(\mu^{\circ}+\rho_1|\mu^{\circ})_1+2c|\mu^\circ|
=-(\mu+2\rho_c\vert\mu)_c.
\end{align*}
The identity \eqref{macd} was used in the second last identity
above.

Now let $\mf{x}=\mf{b}$. One shows that
$$(\mu+2\rho_c|\mu)_c+(\vartheta(\mu)+2\rho_c|\vartheta(\mu))_c=\sum_{i\ge
1}\mu_i(\mu_i-2i+1)+\sum_{i\ge 1}\mu'_i(\mu'_i-2i+1).$$ Now
\eqnref{macd} says that the right-hand side is zero.
\end{proof}}

\begin{thm}\label{mainthm-negative c} Let $k\in\Z_+$, and $\vartheta$ as in {\rm \eqnref{vartheta-negative
d}} or {\rm \eqnref{vartheta-negative b}}. We have the following
isomorphisms of $\mf{l}$-modules:
\begin{align*}
{\rm H}_k({\mathfrak u}_-;{L}(\mf{d}_{\infty},\La^{\mf
d}_-(\la))&\cong \bigoplus_{w\in W^0_k(\mf{c})}L({\mf
l},\vartheta(w\circ \La^{\mf c}_+(\la))),\quad \la\in \mc{P}({\rm
Sp}(d)).\\
{\rm H}_k({\mathfrak u}_-;\tL(\mf{c}_{\infty},\La^{\mf
c}_-(\la))&\cong \bigoplus_{w\in W^0_k(\mf{d})}\tL({\mf
l},\vartheta^{-1}(w\circ \La^{\mf d}_+(\la))),\quad \la\in
\mc{P}({\rm O}(d)).\\
{\rm H}_k({\mathfrak u}_-;{L}(\mf{b}^{\mf
0}_{\infty},\La^{\mf{b}^{\mf 0}}_-(\la))&\cong \bigoplus_{w\in
W^0_k(\mf{b})}L({\mf l},\vartheta(w\circ \La^{\mf b}(\la))),\quad
\la\in \mc{P}({\rm Pin}(d)).
\end{align*}
In particular, we have ${\rm ch}\big{[}{\rm H}_k({\mathfrak
u}_-;L(\mf{d}_{\infty},\La^{\mf
d}_\mp(\la))\big{]}=\omega\big{(}{\rm ch}\big{[}{\rm H}_k({\mathfrak
u}_-;L(\mf{c}_{\infty},\La^{\mf c}_\pm(\la))\big{]}\big{)}$ and
${\rm ch}\big{[}{\rm H}_k({\mathfrak u}_-;L(\mf{b}^{\mf
0}_{\infty},\La^{\mf{b}^{\mf 0}}_-(\la))\big{]}=\omega\big{(}{\rm
ch}\big{[}{\rm H}_k({\mathfrak u}_-;L(\mf{b}_{\infty},\La^{\mf
b}(\la))\big{]}\big{)}$.
\end{thm}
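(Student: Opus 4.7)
The plan is to run a proof parallel to that of \thmref{mainthm}, but transporting information between \emph{positive-level} integrable $\G$-modules (whose Kostant homology is known from \propref{char:schur} and \propref{eigenvalue:classical}) and the \emph{negative-level} modules via the bijection $\vartheta$ of \eqref{vartheta-negative d}, \eqref{vartheta-negative b} together with the symmetric-function involution $\omega$. The first observation I would establish is the character-level intertwining $\omega({\rm ch}\,\tL(\G,\La^{\mathfrak x}_+(\la)))={\rm ch}\,\tL(\G',\La^{\mathfrak x'}_-(\la))$ (already noted in the text after each of Theorems \ref{char:negative d}--\ref{char:negative b}) and, more importantly, the analogous identity $\omega({\rm ch}\,\La^k\mathfrak u_-^{(+)})={\rm ch}\,\La^k\mathfrak u_-^{(-)}$ for the exterior algebras on the two nilradicals. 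These together yield
\[
\omega\bigl({\rm ch}\bigl[\La^k\mathfrak u_-\otimes \tL(\G,\La^{\mathfrak x}_+(\la))\bigr]\bigr) ={\rm ch}\bigl[\La^k\mathfrak u_-\otimes \tL(\G',\La^{\mathfrak x'}_-(\la))\bigr].
\]

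Next, I would prove the exact analogue of \lemref{aux112}: a dominant $\mf l$-weight $\mu\in\mc P^{+,\mathfrak x}_{\mf l}$ contributes $L(\mf l,\mu)$ to $\La^k\mathfrak u_-\otimes \tL(\G,\La^{\mathfrak x}_+(\la))$ with multiplicity $m_\mu$ if and only if $L(\mf l,\vartheta(\mu))$ contributes to $\La^k\mathfrak u_-\otimes \tL(\G',\La^{\mathfrak x'}_-(\la))$ with the same multiplicity, using that $\vartheta$ is a bijection on the relevant weight sets and that $\omega({\rm ch}\,L(\mf l,\mu))={\rm ch}\,L(\mf l,\vartheta(\mu))$ (this follows because $\omega$ sends Schur functions to conjugate Schur functions and $\vartheta$ is defined by conjugating partitions). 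Combining this with \lemref{aux113-negative d} and the $W$-invariance of $(\cdot|\cdot)_c$, I would conclude the Casimir-matching statement: an $\mf l$-weight $\eta\in\mc P^{+,\mathfrak x'}_{\mf l}$ with the right Casimir eigenvalue $(\eta+2\rho_c|\eta)_c=(\La^{\mathfrak x'}_-(\la)+2\rho_c|\La^{\mathfrak x'}_-(\la))_c$ appearing in $\La^k\mathfrak u_-\otimes \tL(\G',\La^{\mathfrak x'}_-(\la))$ must equal $\vartheta(w\circ\La^{\mathfrak x}_+(\la))$ for a unique $w\in W^0_k(\mathfrak x)$, this being the image under $\vartheta$ of the positive-level statement in \propref{eigenvalue:classical}(1).

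Third, I would verify complete reducibility of the homology groups over $\mf l$, together with the analogue of \propref{eigenvalue:super}: every $\mf l$-summand of ${\rm H}_k(\mathfrak u_-;\tL(\G',\La^{\mathfrak x'}_-(\la)))$ has Casimir eigenvalue $(\La^{\mathfrak x'}_-(\la)+2\rho_c|\La^{\mathfrak x'}_-(\la))_c$. Complete reducibility over the Levi $\mf l$ follows because the Fock space $\F^{-\ell}$ (or $\F^{-\ell+\hf}$) lies in a semisimple tensor category of polynomial-type $\mf l$-modules, and both $\La^k\mathfrak u_-$ and $\tL(\G',\La^{\mathfrak x'}_-(\la))$ are objects in this category, so subquotients stay inside it. The Casimir-eigenvalue statement is the standard computation of \cite[Proposition 18]{L} transplanted to this setting.

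Finally, applying the Euler--Poincar\'e principle to the character formulas of Theorems \ref{char:negative d}, \ref{char:negative c}, \ref{char:negative b} and using the bijective correspondence established above, the alternating sum of ${\rm ch}\,{\rm H}_k(\mathfrak u_-;\tL(\G',\La^{\mathfrak x'}_-(\la)))$ must equal $\omega$ applied to the corresponding alternating sum on the positive-level side, which by \propref{char:schur} decomposes into a signed sum of characters $\sum_w {\rm ch}\,\tL(\mf l,\vartheta(w\circ\La^{\mathfrak x}_+(\la)))$ over $w\in\bigsqcup_k W^0_k(\mathfrak x)$. The Casimir constraint together with distinctness of the highest weights $\vartheta(w\circ\La^{\mathfrak x}_+(\la))$ as $w$ ranges over $W^0$ forces the signed cancellation to be term-by-term, yielding the desired isomorphism. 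The main obstacle I anticipate is verifying that the bijection $\vartheta$ continues to respect the Casimir pairing after the conjugation in \eqref{vartheta-negative b} for the $\mf b^{\mf 0}_\infty$ case, since that involution swaps $\mu_i\mapsto\mu'_i$ while also flipping the sign of the level — but \lemref{aux113-negative d} (whose proof is already given) handles precisely this, and the remaining work is the careful bookkeeping of $\vartheta$ versus $\vartheta^{-1}$ in the three cases.
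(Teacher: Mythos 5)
Your proposal is correct and follows essentially the same route the paper takes: the paper's proof of this theorem is deliberately terse, instructing the reader to reuse the argument from \thmref{mainthm} with \lemref{aux113-negative d} in place of Lemmas~\ref{aux113}/\ref{aux113-cd}, and your reconstruction supplies exactly those details (the $\omega$-intertwining of characters of $\Lambda^k\mathfrak{u}_-$ and of the modules, the transported multiplicity lemma analogous to \lemref{aux112}, complete reducibility over $\mathfrak{l}$, the Casimir constraint on homology summands, and the Euler--Poincar\'e/distinctness-of-weights cancellation), together with a correct observation that the positive-level side is where \propref{eigenvalue:classical}(1) applies directly while the negative-level side (non-integrable) receives it only via transport through $\vartheta$ and $\omega$.
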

\begin{proof} The result follows from the same type of argument as the one used in the proof of
\thmref{mainthm}, now using Lemma \ref{aux113-negative d}. We
leave the details to the reader.
\end{proof}

\appendix{}

\section{Two new reductive dual pairs}\label{appendix:A}

\subsection{The $\left(\spo(2m|2n+1),{\rm
Pin}(d)\right)$-duality}\label{spo:pin:duality} Let $d=2\ell$ be
even.

There exists a commuting action of $\spo(2m|2n+1)$ and ${\rm
Pin}(d)$ on $S\left(\C^{2m|2n+1}\otimes\C^\frac{d}{2}\right)$ as
follows. We have
\begin{equation*}
S(\C^{2m|2n+1}\otimes\C^\frac{d}{2})\cong
S(\C^{m|n}\otimes\C^{d})\otimes S(\C^{0|1}\otimes\C^\frac{d}{2}).
\end{equation*}
On $S(\C^{m|n}\otimes\C^{d})$ we have an action of the Howe dual pair $(\spo(2m|2n),{\rm
O}(d))$ by \propref{spo:duality}. On the other hand on
$S(\C^{0|1}\otimes\C^\frac{d}{2})\cong\Lambda(\C^\frac{d}{2})$ the Lie algebra
$\mf{so}(d)$ acts by two irreducible spin representations, giving rise to an irreducible
representation of ${\rm Pin}(d)$. Since representations of ${\rm O}(d)$ pull back to
representations of ${\rm Pin}(d)$ we obtain a commuting action of $\spo(2m|2n)$ and ${\rm
Pin}(d)$ on $S(\C^{2m|2n+1}\otimes\C^\frac{d}{2})$. This action of ${\rm Pin}(d)$ does
not factor through ${\rm O}(d)$. Furthermore the commuting action of $\spo(2m|2n)$
extends to a commuting action of $\spo(2m|2n+1)$. It follows from \secref{spo-action} and
arguments similar to \cite[Proposition 4.1]{CZ2} that
$S(\C^{2m|2n+1}\otimes\C^\frac{d}{2})$ is a unitarizable, and hence a completely
reducible, $\spo(2m|2n+1)$-module.

\subsubsection{Formulas for $\mf{so}(d)$- and
$\spo(2m|2n+1)$-action on $S(\C^{2m|2n+1}\otimes\C^{\frac{d}{2}})$}

We introduce even indeterminates ${\bf x}:=\{x_i^k,\ov{x}_i^k\}$ and
odd indeterminates ${\bf\xi}:=\{\xi_j^k,\ov{\xi}_j^k,\xi_0^k\}$,
where $1\le i\le m,1\le j\le n, 1\le k\le \ell$.  We identify
$S(\C^{2m|2n+1}\otimes\C^{\frac{d}{2}})$ with the polynomial
superalgebra $\C[{\bf x},\xi]$.  Then the actions of $\spo(2m|2n+1)$
and $\mf{so}(d)$ may be realized as differential operators as
follows.\vskip 5mm

\noindent $\bullet$ {\it Formulas for the
$\mf{so}(d)$-action.}

\begin{align}
&\frac{1}{2}\delta_{ij}-\xi_0^j \frac{\partial}{\partial \xi^i_0} +
\sum_{t=1}^n\left( \xi^i_t\frac{\partial}{\partial \xi^j_t} -
\ov{\xi}^j_t\frac{\partial}{\partial \ov{\xi}^i_t} \right) +
\sum_{s=1}^m \left( x^i_s \frac{\partial}{\partial x^j_s}
-\ov{x}^j_s \frac{\partial}{\partial \ov{x}^i_s}\right) ,\quad 1\le i,j\le\ell,\allowdisplaybreaks \label{so:action1}\\
&\frac{\partial}{\partial \xi^i_0}\frac{\partial}{\partial
\xi^j_0}+\sum_{t=1}^n (\xi^i_t\frac{\partial}{\partial
\ov{\xi}^j_t}- \xi^j_t\frac{\partial}{\partial \ov{\xi}^i_t}) +
\sum_{s=1}^m (x^i_s\frac{\partial}{\partial \ov{x}^j_s}-
x^j_s\frac{\partial}{\partial \ov{x}^i_s}),\quad 1\le i,j\le \ell; i\not=j,\allowdisplaybreaks \label{so:action2}\\
&\xi^i_0\xi^j_0 + \sum_{t=1}^n (\ov{\xi}^i_t\frac{\partial}{\partial
\xi^j_t}- \ov{\xi}^j_t\frac{\partial}{\partial \xi^i_t})
 + \sum_{s=1}^m (\ov{x}^i_s\frac{\partial}{\partial x^j_s}-
\ov{x}^j_s\frac{\partial}{\partial x^i_s}),\quad 1\le i,j\le \ell;
i\not=j.\nonumber
\end{align}\vskip 5mm

\noindent $\bullet$ {\it Formulas for the
$\spo(2m|2n+1)$-action.}\label{spo-action}

\begin{align}
&\sum_{k=1}^\ell\left( x^k_i\frac{\partial}{\partial x^k_j} +
\ov{x}^k_i\frac{\partial}{\partial \ov{x}^k_j}\right)
+\ell\delta_{ij},\quad 1\le i,j\le m,\label{spo:action1}\allowdisplaybreaks \\
&I_{x_ix_j}:=\sum_{k=1}^\ell \left( x_i^k\ov{x}_j^k + \ov{x}_i^k
x_j^k \right),\quad \Delta_{x_ix_j}:=\sum_{k=1}^\ell
\left(\frac{\partial}{\partial x_i^k}\frac{\partial}{\partial
\ov{x}_j^k} + \frac{\partial}{\partial \ov{x}_i^k}
\frac{\partial}{\partial x_j^k}
\right),\quad 1\le i,j\le m,\allowdisplaybreaks\nonumber\\
&\sum_{k=1}^\ell\left( \xi^k_i\frac{\partial}{\partial \xi^k_j} +
\ov{\xi}^k_i\frac{\partial}{\partial \ov{\xi}^k_j}\right)-\ell\delta_{ij},\quad 1\le i,j\le n, \label{spo:action2}\allowdisplaybreaks\\
&I_{\xi_i\xi_j}:=\sum_{k=1}^\ell \left( \xi_i^k\ov{\xi}^k_j +
\ov{\xi}^k_i\xi^k_j \right) ,\quad
\Delta_{\xi_i\xi_j}:=\sum_{k=1}^\ell \left( \frac{\partial}{\partial
\xi_i^k} \frac{\partial}{\partial \ov{\xi}^k_j } +
\frac{\partial}{\partial \ov{\xi}^k_i} \frac{\partial}{\partial
\xi^k_j}
\right),\quad 1\le i,j\le n;i\not=j,\nonumber\allowdisplaybreaks\\
&I_{\xi_0\xi_i}:=\sum_{k=1}^\ell \left( \xi_i^k\xi_0^k  +
\ov{\xi}_i^k \frac{\partial}{\partial {\xi}_0^k} \right),\quad
\Delta_{\xi_0\xi_i}:=\sum_{k=1}^\ell \left( \frac{\partial}{\partial
{\xi}_0^k}\frac{\partial}{\partial {\xi}_i^k}
+\xi_0^k\frac{\partial}{\partial \ov{\xi}_i^k} \right),\quad 1\le
i\le n.\nonumber
\end{align}

\begin{align}
&\Delta_{\xi_0 x_i}:=\sum_{k=1}^\ell \left( \frac{\partial}{\partial
\xi_0^k}\frac{\partial}{\partial x_i^k}+\xi_0^k
\frac{\partial}{\partial \ov{x}_i^k} \right),\quad I_{\xi_0
x_i}:=\sum_{k=1}^\ell \left( \xi_0^kx_i^k +
\ov{x}_i^k\frac{\partial}{\partial \xi_0^k} \right),\quad
1\le i\le m,\nonumber\allowdisplaybreaks\\
&\Delta_{x_i\xi_j}:=\sum_{k=1}^\ell \left( \frac{\partial}{\partial
\xi_j^k} \frac{\partial}{\partial \ov{x}_i^k} +
\frac{\partial}{\partial \ov{\xi}_j^k} \frac{\partial}{\partial
{x}_i^k} \right),\, I_{x_i\xi_j}:=\sum_{k=1}^\ell \left( {x}_i^k
 \ov{\xi}_j^k + \ov{x}_i^k{\xi}_j^k
\right),\,1\le i\le m, 1\le j\le n,\nonumber\allowdisplaybreaks\\
&\sum_{k=1}^\ell \left( x_i^k \frac{\partial}{\partial \xi_j^k} +
\ov{x}_i^k\frac{\partial}{\partial \ov{\xi}_j^k}
\right),\quad\sum_{k=1}^\ell \left(
{\xi}_j^k\frac{\partial}{\partial {x}_i^k} + \ov{\xi}_j^k
\frac{\partial}{\partial \ov{x}_i^k} \right),\quad 1\le i\le m, 1\le
j\le n.\label{spo:action3}
\end{align}

\subsubsection{The module decomposition} It is evident from \eqnref{so:action1} that the action of
${\rm Pin}(d)$ on $\C[{\bf x},\xi]$ does not factor through ${\rm
O}(d)$. Since ${\rm End}_{\C}(\C[{\bf x},\xi])^{{\rm Pin}(d)}$ is
generated by $\spo(2m|2n+1)$ in \secref{spo-action}, it follows from
the double commutant theorem that with respect to the
$\spo(2m|2n+1)\times{\rm Pin}(d)$-action we have
\begin{equation}\label{duality-pin}
\C[{\bf x},\xi]\cong\bigoplus_{\la\in\La} L(\spo(2m|2n+1),\La^{\mf
b}_f(\la))\otimes V^\la_{{\rm Pin}(d)},
\end{equation}
where $\La\subseteq\mc{P}({\rm Pin}(d))$, and $\La^{\mf
b}_f:\La\rightarrow\ov{\h}^*$ is an injection. It remains to
determine the set $\La$ and the map $\La^{\mf{b}}_f$.

\begin{thm}\label{spopin-duality} As an $\spo(2m|2n+1)\times{\rm Pin}(d)$-module we have
\begin{equation*}
S(\C^{2m|2n+1}\otimes\C^{\frac{d}{2}})\cong \bigoplus_{\la}
L(\spo(2m|2n+1),\La^{\mf b}_f(\la))\otimes V^\la_{{\rm Pin}(d)},
\end{equation*}
where the summation is over all $\la\in \mc{P}({\rm Pin}(d))$ with $\la_{m+1}\le n$, and
$\La^{\mf b}_f(\la)=\la^\natural+\frac{d}{2}{\bf 1}_{m|n}$.
\end{thm}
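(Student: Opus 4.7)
The plan is to identify the set $\La$ and the injection $\La^{\mf b}_f$ in \eqnref{duality-pin} by producing joint highest weight vectors. First I would establish complete reducibility of $\C[\mathbf{x},\xi]$ over $\spo(2m|2n+1)$ via a unitarizable contravariant Hermitian form, by the same type of construction used in \cite[Proposition 4.1]{CZ2}. Combined with the observation that the constant $\frac{1}{2}\delta_{ij}$ in \eqnref{so:action1} prevents the ${\rm Pin}(d)$-action from descending to ${\rm O}(d)$, the double commutant theorem then yields the multiplicity-free decomposition \eqnref{duality-pin} indexed by some subset $\La \subseteq \mc{P}({\rm Pin}(d))$ with some injection $\La^{\mf b}_f$.

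To compute $\La$ and $\La^{\mf b}_f$, I would use the factorization $\C[\mathbf{x},\xi] = \C[\mathbf{x}] \otimes \C[\xi_0]$, where $\C[\mathbf{x}] \cong S(\C^{m|n}\otimes\C^d)$ carries the Howe pair $(\spo(2m|2n),{\rm O}(d))$ of \propref{spo:duality}, and $\C[\xi_0] \cong \Lambda(\C^{d/2})$ carries the spin representation of ${\rm Pin}(d)$ lifting the $\mf{so}(d)$-action in \eqnref{so:action1}. For each $\la \in \mc{P}({\rm O}(d))$ with $\la_{m+1}\le n$, let $v_\la^{(1)} \in \C[\mathbf{x}]$ be the $(\spo(2m|2n),{\rm O}(d))$-joint highest weight vector supplied by \propref{spo:duality}, of $\spo(2m|2n)$-weight $\La^{\mf d}_f(\la) = \la^\natural + \tfrac{d}{2}\mathbf{1}_{m|n}$, and let $v_{\rm sp} = 1 \in \C[\xi_0]$ be the highest weight vector of the spin representation (of $\mf{so}(d)$-weight $(\tfrac{1}{2},\ldots,\tfrac{1}{2})$ read off from \eqnref{so:action1}). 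Set $v_\la := v_\la^{(1)} \otimes v_{\rm sp}$. I would then verify, using \eqnref{spo:action2}--\eqnref{spo:action3}, that $v_\la$ is annihilated by the positive odd root vectors in $\spo(2m|2n+1) \setminus \spo(2m|2n)$ coupling $\xi_0$ to the $x_i,\xi_j$ and by the positive root vectors of $\mf{so}(d)$ lifted to ${\rm Pin}(d)$, so that $v_\la$ is a joint highest weight vector for the full $\spo(2m|2n+1) \times {\rm Pin}(d)$-action.

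The joint weight of $v_\la$ identifies the isotype: the ${\rm Pin}(d)$-weight is $\la+(\tfrac{1}{2},\ldots,\tfrac{1}{2})$, which is the highest weight of $V^\la_{{\rm Pin}(d)}$ for $\la \in \mc{P}({\rm Pin}(d))$; the $\spo(2m|2n+1)$-weight inherits the $\spo(2m|2n)$-weight $\la^\natural + \tfrac{d}{2}\mathbf{1}_{m|n}$ of $v_\la^{(1)}$, yielding $\La^{\mf b}_f(\la) = \la^\natural + \tfrac{d}{2}\mathbf{1}_{m|n}$ as claimed. To confirm that $\La$ is exactly $\{\la \in \mc{P}({\rm Pin}(d)) : \la_{m+1}\le n\}$, I would compare characters: the character of the left-hand side factors as a product over $\C[\mathbf{x}]$ and $\C[\xi_0]$, and matches the product of the character identity of \propref{spo:duality} with $\mathrm{ch}\,\Lambda(\C^{d/2})$, whose ${\rm Pin}(d)$-branching via spin times ${\rm O}(d)$-modules identifies the summands precisely.

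The main obstacle will be the explicit verification that $v_\la$ is annihilated by the new odd positive root vectors of $\spo(2m|2n+1)$ extending $\spo(2m|2n)$, namely those in \eqnref{spo:action2}--\eqnref{spo:action3} involving $\Delta_{\xi_0 x_i}$, $I_{\xi_0 x_i}$ and $\Delta_{\xi_0 \xi_i}$: these act nontrivially on $v_{\rm sp}$ and require a sign-sensitive computation cancelling against the fact that $v_\la^{(1)}$ is killed by the corresponding even operators of $\spo(2m|2n)$. A secondary technical point is matching the choice of Borel of $\mf{so}(d)$ so that the half-integer shift in the ${\rm Pin}(d)$-weight of $v_{\rm sp}$ is consistent with the conventions used to label $V^\la_{{\rm O}(d)}$ and $V^\la_{{\rm Pin}(d)}$.
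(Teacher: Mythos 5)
Your overall strategy — factor $\C[{\bf x},\xi]$ as $\C[{\bf x},\xi\setminus\xi_0]\otimes\C[\xi_0]$, tensor the known $(\spo(2m|2n),{\rm O}(d))$-highest weight vector $v_\la^{(1)}$ with $v_{\rm sp}=1\in\C[\xi_0]$, and verify the result is a joint highest weight vector — is a genuine alternative to the paper's argument. The paper instead restricts $\spo(2m|2n+1)$ to $\glmn$ on the space of harmonics $\mc{H}$, treats the limit $n=\infty$ using the explicit highest weight vectors of \cite[Theorems~4.1,~4.2]{CW1}, and then truncates to finite $n$ by setting variables to zero; this is a cleaner path to the index set $\{\la\in\mc{P}({\rm Pin}(d)):\la_{m+1}\le n\}$, and it avoids the circularity problem your character-comparison step would face (the characters of $L(\spo(2m|2n+1),\La^{\mf b}_f(\la))$ are exactly what the paper is in the business of computing, so you cannot ``match'' against them a priori).

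The more serious issue is the explanation you give for why $v_\la=v_\la^{(1)}\otimes 1$ is annihilated by $\Delta_{\xi_0 x_i}$ and $\Delta_{\xi_0\xi_i}$. You propose a ``sign-sensitive computation cancelling against the fact that $v_\la^{(1)}$ is killed by the corresponding even operators of $\spo(2m|2n)$.'' This does not work. For instance $\Delta_{\xi_0 x_i}^2$ is proportional to the even operator $\Delta_{x_ix_i}$, and $\Delta_{x_ix_i}v_\la^{(1)}=0$ tells you only that $\Delta_{\xi_0 x_i}^2 v_\la=0$, not that $\Delta_{\xi_0 x_i}v_\la=0$; more generally, knowing that the even annihilators kill $v_\la^{(1)}$ gives no algebraic reason for the odd ones to. The actual reason is simpler and structural: writing out the operators, the first summand of each $\Delta_{\xi_0\,\cdot}$ contains $\partial/\partial\xi_0^k$, which kills $v_{\rm sp}=1$, while the second summand contains $\xi_0^k\,\partial/\partial\ov{x}_i^k$ or $\xi_0^k\,\partial/\partial\ov{\xi}_i^k$, which kills $v_\la^{(1)}$ \emph{because} $v_\la^{(1)}$ happens to lie in the subring $\C[x,\xi]$ with no barred variables. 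That barred-variable-freeness is a property of the explicit joint highest weight vectors of \cite{CW1} for partitions with $\ell(\la)\le\ell=d/2$ — precisely the condition $\la\in\mc{P}({\rm Pin}(d))$ — and it must be cited or proved; it does not follow from annihilation by the even $\Delta$'s. (It does fail for $\la\in\mc{P}({\rm O}(d))$ with $\la'_1>\ell$, which is why only $\mc{P}({\rm Pin}(d))$ appears.) A smaller slip: you list $I_{\xi_0 x_i}$ among the odd positive root vectors whose annihilation you must check, but $I_{\xi_0 x_i}$ is a \emph{negative} root vector (root $\varepsilon_i$) and is not required to vanish on a highest weight vector.
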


\begin{proof} Let $\ov{\U}_+$ be the algebra generated by the $\Delta$-operators, i.e.~$\Delta_{x_ix_j}$,
$\Delta_{\xi_i\xi_j}$, $\Delta_{\xi_0\xi_j}$, $\Delta_{\xi_0 x_i}$, and
$\Delta_{x_i\xi_j}$. Then $\ov{\U}_+$ is invariant under the adjoint action of
$\gl(m|n)$.

An element $f\in\C[\bf{x},\xi]$ is called {\em harmonic}, if $f$ is annihilated by
$\ov{\mf{u}}^+$. The space of harmonics will be denoted by $\mc{H}$ and it evidently
admits an action of $\glmn\times {\rm Pin}(d)$. Furthermore, since
$S(\C^{2m|2n+1}\otimes\C^\frac{d}{2})$ is a completely reducible $\glmn$-module,
$L(\spo(2m|2n+1),\mu)^{\ov{\mf{u}}^+}$ is also completely reducible over $\glmn$, for any
irreducible $\spo(2m|2n+1)$-module $L(\spo(2m|2n+1),\mu)$ that appears in
$S(\C^{2m|2n+1}\otimes\C^\frac{d}{2})$. By irreducibility of $L(\spo(2m|2n+1),\mu)$ we
must have
\begin{align*}
L(\spo(2m|2n+1),\mu)^{\ov{\mf{u}}^+}\cong L(\glmn,\mu).
\end{align*}
So, by \eqref{duality-pin} $(\text{Pin}(d),\glmn)$ forms a Howe dual pair on $\mc{H}$.
Thus, proving the theorem is equivalent to establishing the following decomposition of
$\mc{H}$ as a $\glmn\times\text{Pin}(d)$-module:
\begin{equation}  \label{harmonic}
\mc{H}\cong\bigoplus_{\la }
L(\glmn,\La^{\mf b}_f(\la))\otimes V^\la_{{\rm Pin}(d)},
\end{equation}
where the summation is over all $\la\in \mc{P}({\rm Pin}(d))$, i.e.~$\ell(\la)\le\ell$,
with $\la_{m+1}\le n$, and $\La^{\mf b}_f(\la)=\la^\natural+\ell{\bf 1}_{m|n}$.

We first consider the limit case $n=\infty$ with the space of harmonics denoted by $
\mc{H}^\infty$. Here the only restriction on $\la$ is $\ell(\la)\le \ell$, and we observe
that the vector given in \cite[Theorems 4.1 and 4.2]{CW1} associated to such a partition
$\la$ is indeed annihilated by $\ov{\mf{u}}^+$ and hence is a joint
$\glmn\times\text{Pin}(d)$-highest weight vector of weight $(\la,\la^\natural+\ell{\bf
1}_{m|n})$. Hence all the summands on the right hand side of (\ref{harmonic}) occur in
the space of harmonics, and in particular, all irreducible representations of
$\text{Pin}(d)$ occur. Therefore, we have established (\ref{harmonic}) in the case
$n=\infty$.

Now consider the finite $n$ case. We may regard
$S(\C^{2m|1+2n}\otimes\C^\frac{d}{2})\subseteq S(\C^{2m|1+2\infty}\otimes\C^\frac{d}{2})$
with compatible actions $\spo(2m|1+2n)\subseteq\spo(2m|1+2\infty)$.  From the formulas of
the $\Delta$-operators we see that $\mc{H}\subseteq\mc{H}^\infty$. Thus $\mc{H}$ is
obtained from $\mc{H}^\infty$ by setting the variables $\xi^k_j=\ov{\xi}^k_j=0$, for
$j>n$. However, it is clear, from the explicit formulas of the joint highest vectors in
$\mc{H}^\infty$, that, when setting the variables $\xi^k_j=\ov{\xi}^k_j=0$ for $j>n$,
precisely those vectors corresponding to $\la$ with $\la_{m+1}\le n$ will survive.
\end{proof}

\begin{rem}
\thmref{spopin-duality} is a finite-dimensional analogue of
\cite[Theorem 8.2]{CW2}.
\end{rem}

\subsection{The Lie superalgebra $\mf{b}^{\mf 0}_\infty$ and the
$(\mf{b}^{\mf 0}_\infty,{\rm Pin}(d))$-duality}\label{b':algebra}

Consider the superspace with basis $\{\,v_{\hf},v_k\,|\,k\in\Z\,\}$
 with ${\rm deg}v_k=\bar{0}$, for $k\in\Z$, and
${\rm deg}v_\hf=\bar{1}$. The Lie superalgebra $\ov{\mf{b}^{\mf
0}_\infty}$ is the subalgebra of the general linear superalgebra
preserving the even super-skewsymmetric bilinear form determined by
$(v_i|v_j):=(-1)^i\delta_{i,1-j}$, $i,j\in\Z$, and
$(v_\hf|v_\hf):=1$. Now consider the central extension of general
linear superalgebra corresponding to the $2$-cocycle
$\gamma(A,B):={\rm
Str}\left(\left(\sum_{j<1}{E_{jj}}\right)[A,B]\right)$, where ${\rm
Str}(c_{ij}):=-c_{\hf\hf}+\sum_{j\in\Z}c_{jj}$.  The Lie
superalgebra $\mf{b}^{\mf 0}_\infty$ is the central extension of
$\ov{\mf{b}^{\mf 0}_\infty}$ by a one-dimensional center $\C K$,
obtained via restriction of the cocycle $\gamma$.

By construction one sees that $\mf{c}_\infty\subseteq \mf{b}^{\mf
0}_\infty$, and also $\mf{c}_\infty$ and $\mf{b}^{\mf 0}_\infty$
share the same Cartan subalgebra $\h$. This allows us to identify
their Cartan and dual Cartan subalgebras.  For convenience of the
reader we list below the simple roots and coroots for $\mf{b}^{\mf
0}_\infty$.

\begin{align*}
\Pi^\vee=&\{\alpha_0^\vee=\widetilde{E}_1+K,\
\alpha_1^\vee=\widetilde{E}_i-\widetilde{E}_{i+1}\
(i\in\N)\}\\
\Pi=&\{\alpha_0=-\epsilon_1,\ \alpha_i=\epsilon_i-\epsilon_{i+1}\
(i\in\N)\}\\
\Delta_+=&\{\pm\epsilon_i-\epsilon_j,\ -\epsilon_i,\ -2\epsilon_i\
(i\in\N, i<j)\}.
\end{align*}
The associated Dynkin diagram is as follows:
\begin{center}
\hskip -3cm \setlength{\unitlength}{0.16in}
\begin{picture}(24,4)
\put(5.6,2){\circle*{0.9}} \put(8,2){\makebox(0,0)[c]{$\bigcirc$}}
\put(10.4,2){\makebox(0,0)[c]{$\bigcirc$}}
\put(14.85,2){\makebox(0,0)[c]{$\bigcirc$}}
\put(17.25,2){\makebox(0,0)[c]{$\bigcirc$}}
\put(19.4,2){\makebox(0,0)[c]{$\bigcirc$}}
\put(8.35,2){\line(1,0){1.5}} \put(10.82,2){\line(1,0){0.8}}
\put(13.2,2){\line(1,0){1.2}} \put(15.28,2){\line(1,0){1.45}}
\put(17.7,2){\line(1,0){1.25}} \put(19.81,2){\line(1,0){0.9}}
\put(6.8,2){\makebox(0,0)[c]{$\Longleftarrow$}}
\put(12.5,1.95){\makebox(0,0)[c]{$\cdots$}}
\put(21.5,1.95){\makebox(0,0)[c]{$\cdots$}}
\put(5.4,1){\makebox(0,0)[c]{\tiny $\alpha_0$}}
\put(7.8,1){\makebox(0,0)[c]{\tiny $\alpha_1$}}
\put(10.4,1){\makebox(0,0)[c]{\tiny $\alpha_2$}}
\put(14.5,1){\makebox(0,0)[c]{\tiny $\alpha_{n-1}$}}
\put(17.15,1){\makebox(0,0)[c]{\tiny $\alpha_n$}}
\put(19.5,1){\makebox(0,0)[c]{\tiny $\alpha_{n+1}$}}
\end{picture}
\end{center}

Let $\rho_c,\La_0^{\mf{b}^{\mf 0}}\in\h^*$ be determined by
$\langle\rho_c,\widetilde{E}_j\rangle=-j+\hf$,
$\langle\rho_c,K\rangle=\langle\La_0^{\mf{b}^{\mf
0}},\widetilde{E}_j\rangle=0$, and $\langle\La_0^{\mf{b}^{\mf
0}},K\rangle=1$, $j\in\N$.  For $c\in\C$ let $\mc{P}^{+,\mf{b}^{\mf
0}}_{\mf{l},c}$ consist of elements in $\h^*$ of the form
$c\La_0^{\mf{b}^{\mf 0}}+\sum_{i\ge 1}\mu_i\epsilon_i$, where
$(\mu_1,\mu_2,\ldots)\in\mc{P}^+$.

In \secref{sec:negative} the following bilinear form
$(\cdot\vert\cdot)_c$ on $\mf{b}^{\mf 0}_\infty$ is used. We first
choose a bilinear form $(\cdot\vert\cdot)_c$ on $\h^*$ satisfying
\begin{align*}
&(\la\vert \epsilon_i)_c =\langle \la,\widetilde{E}_{i}-K\rangle,
\quad i\in\N,\\ &(\La^{\mf{b}^{\mf 0}}_0\vert\La^{\mf{b}^{\mf
0}}_0)_c=(\La^{\mf{b}^{\mf 0}}_0\vert\rho_c)_c =0.
\end{align*}
One checks that $(\epsilon_i\vert\epsilon_j)_c=\delta_{ij}$,
$(\Lambda^{\mf{b}^{\mf 0}}_0\vert\epsilon_i)_c=-1$ for $i,j\in \N$,
and $2(\rho_c\vert\alpha_i)_c=(\alpha_i\vert\alpha_i)_c$ for $i\in
\Z_+$. Let $\{\,s^{\mf{b}^{\mf 0}}_i\,\}_{i\in \Z_+}$ be the
sequence defined by
\begin{equation*}
s^{\mf{b}^{\mf 0}}_i:=
\begin{cases}
-1, & \text{if $i=0$}, \\
1, & \text{if $i\geq 1$}.
\end{cases}
\end{equation*}
It follows that by defining $s^{\mf{b}^{\mf 0}}_i s^{\mf{b}^{\mf
0}}_j({\alpha}^{\vee}_i\vert
{\alpha}^{\vee}_j)_c:=(\alpha_i\vert\alpha_j)_c$, we obtain a
symmetric bilinear form on $\h$. This form can be extended to a
non-degenerate invariant super-symmetric bilinear form on
$\mf{b}^{\mf 0}_\infty$ such that
\begin{equation}\label{aux:casimir2}
(e_i\vert f_j)_c=\delta_{ij}/s^{\mf{b}^{\mf 0}}_i,
\end{equation}
where $e_i$ and $f_j$ $(i,j\in \Z_+)$ denote the Chevalley
generators with $[e_i,f_i]=\alpha_i^{\vee}$. Now we can define the
Casimir operator $\Omega$ as in \cite[Section 4.1]{CK}.

Let $d=2\ell$ be even. By \cite[Theorem 5.3]{Wa} there exists a
$({\mf c}_\infty,{\rm O}(d))$-duality on $\F^{-\frac{d}{2}}$.  Now
$\La(\C^{\frac{d}{2}})$ is an irreducible ${\rm Pin}(d)$-module and
hence we have a commuting action of $\mf{c}_\infty$ and ${\rm
Pin}(d)$ on $\F^{-\frac{d}{2}}\otimes\La(\C^{\frac{d}{2}})$. One can
show that the action of $\mf{c}_\infty$ on
$\F^{-\frac{d}{2}}\otimes\La(\C^{\frac{d}{2}})$ extends to an action
of $\mf{b}^{\mf 0}_\infty$ that commutes with the action of ${\rm
Pin}(d)$, and furthermore $\mf{b}^{\mf 0}_\infty$ generates ${\rm
End}(\F^{-\frac{d}{2}}\otimes\La(\C^{\frac{d}{2}}))^{{\rm Pin}(d)}$.

In the sequel we will need to have this commuting action in a more
explicit form.  For this let us introduce odd indeterminates
$\xi_{\hf}:=\{\xi^{k}_{\hf}\vert k=1,\ldots,\ell\}$, and identify
$\La(\C^{\frac{d}{2}})$ with the Clifford superalgebra generated by
$\xi_{\hf}$. The action of the Lie algebra $\mf{so}(d)$ on
$\La(\C^{\frac{d}{2}})$ in terms of differential operators in
$\xi^k_{\hf}$ is explicitly given by the summands involving
$\xi_0^k$ in \eqnref{so:action1} and \eqnref{so:action2}. This
action combined with the action of $\mf{so}(d)$ on
$\F^{-\frac{d}{2}}$ as in \cite[(5.52)]{Wa}, gives the action of
$\mf{so}(d)$ on $\F^{-\frac{d}{2}}\otimes\La(\C^{\frac{d}{2}})$. The
commuting action of $\mf{b}^{\mf 0}_\infty$ is as follows. First the
subalgebra $\mf{c}_\infty$ acts on
$\F^{-\frac{d}{2}}\otimes\La(\C^{\frac{d}{2}})$ only on the first
factor as in \cite[Section 5.2]{Wa}.  To complete the description we
only need to give formulas for the action of the odd root vectors
$I_{\xi_\hf i}$ and $\Delta_{\xi_\hf i}$, corresponding to the roots
$\epsilon_i$ and $-\epsilon_i$, $i\in\N$, respectively. They are as
follows:
\begin{equation*}
I_{\xi_\hf
i}:=\sum_{k=1}^\ell\left(\xi_{\hf}^k\gamma^{+,k}_{-i+\hf}-\frac{\partial}{\partial
\xi_\hf^k}\gamma^{-,k}_{-i+\hf}\right),\quad \Delta_{\xi_\hf
i}:=\sum_{k=1}^\ell\left(\xi_{\hf}^k\gamma^{+,k}_{i-\hf}+\frac{\partial}{\partial
\xi_\hf^k}\gamma^{-,k}_{i-\hf}\right).
\end{equation*}

We have the following.

\begin{thm}\label{b'pin:duality} As a $\mf{b}^{\mf 0}_{\infty}\times{\rm
Pin}(d)$-module we have
\begin{equation} \label{eq:dual-negative b}
\F^{-\frac{d}{2}}\otimes\La(\C^{\frac{d}{2}})\cong\bigoplus_{\la\in
{\mc P}({\rm Pin}(d))} L(\mf{b}^{\mf 0}_\infty,\La^{\mf{b}^{\mf
0}}_-(\la))\otimes V_{{\rm Pin}(d)}^\la,
\end{equation}
where $\La^{\mf{b}^{\mf 0}}_-(\la):=-\frac{d}{2}\La^{\mf{b}^{\mf
0}}_0+\sum_{k=1}^{\frac{d}{2}}\la_k\epsilon_k$.
\end{thm}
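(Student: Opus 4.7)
The plan is to mimic the proof of \thmref{spopin-duality}, combining the established $(\mf{c}_\infty,{\rm O}(d))$-duality on $\F^{-d/2}$ with the spin action of ${\rm Pin}(d)$ on $\Lambda(\C^{d/2})$ and then applying the double commutant theorem. The paragraph preceding the theorem asserts that $\mf{b}^{\mf 0}_\infty$ generates ${\rm End}(\F^{-d/2}\otimes\Lambda(\C^{d/2}))^{{\rm Pin}(d)}$; together with the commuting of the two actions, this yields a multiplicity-free decomposition
$$\F^{-d/2}\otimes\Lambda(\C^{d/2}) \cong \bigoplus_{\la\in\Xi} M_\la\otimes V^\la_{{\rm Pin}(d)}$$
for some subset $\Xi\subseteq\mc{P}({\rm Pin}(d))$ and irreducible $\mf{b}^{\mf 0}_\infty$-modules $M_\la$. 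What remains is to show $\Xi=\mc{P}({\rm Pin}(d))$ and to identify the highest weight of each $M_\la$ with $\Lambda^{\mf{b}^{\mf 0}}_-(\la)$.

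Next, I would restrict along $\mf{c}_\infty\subset\mf{b}^{\mf 0}_\infty$ and ${\rm SO}(d)\subset{\rm Pin}(d)$, and use the $(\mf{c}_\infty,{\rm O}(d))$-duality on $\F^{-d/2}$ from \cite[Theorem 5.3]{Wa} to pick, for each $\mu\in\mc{P}({\rm O}(d))$, a joint highest weight vector $v_\mu\in\F^{-d/2}$. For each $\la\in\mc{P}({\rm Pin}(d))$, I would produce a joint $\mf{b}^{\mf 0}_\infty\times{\rm Pin}(d)$-highest weight vector in $\F^{-d/2}\otimes\Lambda(\C^{d/2})$ out of $v_\mu$ and the highest weight vector $\omega\in\Lambda(\C^{d/2})$ of the ${\rm Pin}(d)$-spin representation, where $\mu$ is determined from $\la$ via the half-integer spin shift. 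The $\mf{c}_\infty$-part of the $\mf{b}^{\mf 0}_\infty$-action on such a vector is already controlled by the underlying $(\mf{c}_\infty,{\rm O}(d))$-duality; once the highest weight vector is obtained, a direct weight calculation should produce the advertised $\Lambda^{\mf{b}^{\mf 0}}_-(\la) = -\frac{d}{2}\Lambda^{\mf{b}^{\mf 0}}_0+\sum_k\la_k\epsilon_k$.

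The main obstacle, I expect, is verifying annihilation by the odd positive root vectors $\Delta_{\xi_\hf i}$, which mix the $\mf{c}_\infty$-action on $\F^{-d/2}$ with the Clifford action on $\Lambda(\C^{d/2})$ through the explicit formula $\Delta_{\xi_\hf i}=\sum_k(\xi_\hf^k\gamma^{+,k}_{i-\hf}+\tfrac{\partial}{\partial\xi_\hf^k}\gamma^{-,k}_{i-\hf})$. The naive ansatz $v_\mu\otimes\omega$ will generally require correction by terms obtained through the action of negative $\xi_\hf$-sector root vectors on $v_\mu\otimes 1$, and the cancellation must use the particular signs and coefficients in $I_{\xi_\hf i}, \Delta_{\xi_\hf i}$. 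Once this is carried out, I would close the argument by a character comparison: the character of $\F^{-d/2}\otimes\Lambda(\C^{d/2})$ equals the left-hand side of \eqnref{combid-classical-negative b}, and combining the character formula for $L(\mf{c}_\infty,\Lambda^{\mf c}_-(\mu))$ from \eqnref{combid-classical-negative c1} with the factor $\prod_i(z_i^{\hf}+z_i^{-\hf})$ accounting for $\Lambda(\C^{d/2})$, together with the linear independence of $\{{\rm ch}\,V^\la_{{\rm Pin}(d)}\}_\la$, forces $\Xi=\mc{P}({\rm Pin}(d))$ and pins down the characters of the $M_\la$; their irreducibility from the double commutant theorem then gives $M_\la\cong L(\mf{b}^{\mf 0}_\infty,\Lambda^{\mf{b}^{\mf 0}}_-(\la))$.
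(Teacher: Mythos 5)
Your overall strategy — double commutant, restriction to the $(\mf{c}_\infty,{\rm O}(d))$-duality on $\F^{-d/2}$, production of joint highest weight vectors, and exhaustion of the Pin$(d)$-irreducibles — is the same as the paper's. There are, however, two points where you diverge.

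First, the paper shows the naive ansatz works as-is: the $\mf{c}_\infty\times\mf{so}(d)$-joint highest weight vectors from \cite[(5.54)]{Wa}, viewed inside $\F^{-d/2}\otimes\Lambda(\C^{d/2})$ (tensored with the spinor vacuum), are already $\mf{b}^{\mf 0}_\infty$-highest weight vectors, with no correction terms. Since annihilation by the even positive root vectors is already inherited from the $\mf{c}_\infty$-highest weight condition, the only additional check is annihilation by the single simple odd root vector $\Delta_{\xi_\hf 1}$ (for the simple root $-\epsilon_1$), and the paper asserts this is immediate from the explicit formula for $\Delta_{\xi_\hf 1}$ together with the form of the Wang vectors. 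Your expectation that "the naive ansatz $v_\mu\otimes\omega$ will generally require correction by terms" is thus not borne out; if you carry out the verification you should find the cancellations hold for free.

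Second, your closing character comparison is a valid but heavier route than what the paper does. The paper concludes simply by observing that the Pin$(d)$-weights produced (the half-integer weights with $\ell(\la)\le d/2$) exhaust all irreducible finite dimensional representations of ${\rm Pin}(d)$ that do not factor through ${\rm O}(d)$, whence the index set is all of $\mc{P}({\rm Pin}(d))$. Your character route is not circular — the left-hand side of \eqnref{combid-classical-negative b} is just the manifest character of $\F^{-d/2}\otimes\Lambda(\C^{d/2})$ and does not presuppose the theorem — but it requires the extra work of extracting highest weights of multiplicity spaces from their characters, which the explicit highest weight vectors already hand you directly.
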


\begin{proof}
One checks that the $\mf{c}_\infty\times\mf{so}(d)$-joint highest
weight vectors inside $\F^{-\frac{d}{2}}$ given in \cite[(5.54)]{Wa}
are also $\mf{b}^{\mf 0}_\infty$-highest weight vectors. For this it
is enough to check that they are annihilated by the root vector
$\Delta_{\xi_\hf 1}$ (corresponding to the simple odd root
$-\epsilon_1$). This, however, is easy. The duality then follows
from the fact that this set of vectors exhaust all irreducible
finite dimensional ${\rm Pin}(d)$-highest weights that do not factor
through ${\rm O}(d)$.
\end{proof}

\bigskip
\frenchspacing


\begin{thebibliography}{ABCD}

\bi[Ar]{A} F.~Aribaud: {\em Une nouvelle d\'emonstration d'un
th\'eor\`eme de R.~Bott et B.~Kostant}, Bull. Soc. Math. France
{\bf 95} (1967), 205--242.


\bi[BR]{BR} A.~Berele and A.~Regev: {\em Hook Young diagrams with
applications to combinatorics and representations of Lie
superalgebras}, Adv.~Math.~{\bf 64} (1987), 118--175.

\bi[BT]{BT} T.~Br\"ocker and T.~tom Dieck: {\em Representations of
compact Lie Groups}, Springer Verlag, 1995.

\bi[CK]{CK} S.-J.~Cheng and J.-H.~Kwon: {\em Howe duality and
Kostant's homology formula for infinite dimensional Lie
superalgebras}, Int. Math. Res. Not. IMRN 2008, Art. ID rnn 085, 52pp.


\bi[CLZ]{CLZ} S.-J. Cheng, N.~Lam and R.~B.~Zhang: {\em Character
formula for infinite dimensional unitarizable modules of the general
linear superalgebra}, J.~Algebra {\bf 273} (2004) 780--805.

\bi[CW1]{CW1} S.-J.~Cheng and W.~Wang: {\em Howe duality for Lie
superalgebras}, Compositio Math.~{\bf 128} (2001), 55--94.

\bi[CW2]{CW2} S.-J.~Cheng and W.~Wang: {\em Lie subalgebras of
differential operators on the super circle}, Publ. Res. Inst.
Math. Sci. {\bf 39} (2003), 545--600.

\bi[CW3]{CW} S.-J. Cheng, N. Lam, W. Wang, 
Super duality and irreducible characters of ortho-symplectic Lie superalgebras, 
preprint, math.RT/0911.0129.

\bi[CWZ]{CWZ} S.-J.~Cheng, W.~Wang and R.~B.~Zhang: {\it Super
duality and Kazhdan-Lusztig polynomials}, Trans. Amer. Math. Soc. 360 (2008) 5883--5924.

\bi[CZ1]{CZ1} S.-J.~Cheng and R.~B.~Zhang: {\it Analogue of
Kostant's $\mathfrak u$-cohomology formula for the general linear
superalgebra}, Int.~Math.~Res.~Not.~{\bf 2004} (2004), 31--53.

\bi[CZ2]{CZ2} S.-J.~Cheng and R.~B.~Zhang: {\it Howe duality and
combinatorial character formula for orthosymplectic Lie
superalgebras}, Adv.~Math.~{\bf 182} (2004), 124--172.

\bi[DJKM]{DJKM} E.~Date, M.~Jimbo, M.~Kashiwara and T.~Miwa: {\em
Transformation groups for soliton equations. III. Operator
approach to the Kadomtsev-Petviashvili equation}, J. Phys. Soc.
Japan {\bf 50} (1981), 3806--12. {\em 
IV. A new hierarchy of soliton equations of KP-type}, Phys. {\bf
D4} (1981/82), 343--365.

\bi[DES]{DES} M.~Davidson; T.~Enright and R.~Stanke: {\em
Differential Operators and Highest Weight Representations},
Mem.~Amer.~Math.~Soc.~{\bf 94} (1991) no.~455.

\bi[En]{En} T. Enright: {\em Analogues of Kostant's ${\mf
u}$-cohomology formulas for unitary highest weight modules}, J.
Reine Angew. Math.~{\bf 392} (1988), 27--36.

\bi[Fr]{Fr} I.~Frenkel: {\em Representations of affine Lie
algebras, Hecke modular forms and Kortweg-de Vries type
equations}, Lect.~Notes Math.~{\bf 933} (1982), 71--110.




\bi[GL]{GL} H.~Garland and J.~Lepowsky: {\em Lie algebra homology
and the Macdonald-Kac formulas}, Invent. Math. {\bf 34} (1976),
37--76.

\bi[H1]{H} R.~Howe: {\em Remarks on classical invariant theory},
Trans. AMS {\bf 313} (1989), 539--570.


\bi[H2]{H2} R.~Howe: {\em Perspectives on invariant Theory: Schur
Duality, Multiplicity-free Actions and Beyond}, The Schur Lectures,
Israel Math. Conf. Proc. {\bf 8}, Tel Aviv (1992), 1--182.

\bi[HLT]{HLT}  P.-Y. Huang, N. Lam, T.-M. To, 
Super duality and homology of unitarizable modules of Lie algebras, preprint 2010.

\bi[Ju]{J} E.~Jurisich: {\em An exposition of generalized Kac-Moody
algebras}. Lie algebras and their representations (Seoul, 1995),
121--159, Contemp. Math. {\bf 194}, AMS, Providence, RI, 1996.

\bibitem[K1]{K1} V.~Kac, {\em Lie superalgebras}, Adv. Math. {\bf 16}
(1977), 8--96.

\bi[K]{K} V.~Kac: {\em Infinite dimensional Lie algebras}, Third
edition, Cambridge University Press, 1990.

\bi[Ko]{Ko}  B.~Kostant: {\em Lie algebra cohomology and the
generalized Borel-Weil theorem}, Ann.~Math.~{\bf 74} (1961),
329--387.

\bibitem[KK]{KK} S.~J.~Kang and J.-H.~Kwon: {\em Graded Lie superalgebras, supertrace
formula, and orbit Lie superalgebras}, Proc.~London Math.~Soc.
{\bf81} (2000), 675--724.



\bi[KR]{KR2} V.~Kac and A.~Radul: {\em Representation theory of the
vertex algebra $W_{1+\infty}$}, Transform.~Groups {\bf 1} (1996),
41--70.



\bi[Liu]{L} L.~Liu: {\em Kostant's formula for Kac-Moody Lie
algebras}, J.~Algebra {\bf 149} (1992), 155--178.

\bi[LZ]{LZ} N.~Lam and R.~B.~Zhang: {\em Quasi-finite modules for
Lie superalgebras of infinite rank}, Trans. Amer. Math. Soc. {\bf
358} (2006), 403--439.



\bibitem[Mac]{Mac} I.~G.~Macdonald: {\em Symmetric functions and Hall
polynomials}, Oxford Math.~Monogr., Clarendon Press, Oxford, 1995.




\bi[S2]{S2} A.~Sergeev: {\em An analogue of classical invariant
theory for Lie superalgebras I}, Michigan Math.~J.~{\bf 49}
(2001), 113--146. {\em  II}, Michigan Math.~J.~{\bf 49} (2001),
147--168.

\bibitem[Se]{Se} V.~Serganova: {\em Kazhdan-Lusztig polynomials
and character formula for the Lie superalgebra $\gl(m\vert n)$},
Selecta Math. (N.S.) {\bf 2} (1996), 607--651.

\bi[Vo]{V} D.~Vogan: {\em Irreducible characters of semisimple Lie
groups II: The Kazhdan-Lusztig conjectures}, Duke Math.~J.~{\bf 46}
(1979) 805--859.

\bi[Wa]{Wa} W.~Wang: {\em Duality in infinite dimensional Fock
representations}, Commun.~Contem.~Math.~{\bf 1} (1999), 155--199.



\end{thebibliography}
\end{document}